\documentclass[12pt]{amsart}
\usepackage{amscd,amsmath,amsthm,amssymb}
\usepackage{mathtools}
\usepackage[margin=2cm]{geometry}
\usepackage{epsfig}
\usepackage{rawfonts}
\usepackage{enumerate}
\usepackage{graphics}
\usepackage{multirow}
\usepackage{xspace}
\usepackage[ruled,vlined]{algorithm2e}
\SetKwInput{KwInput}{Input}
\SetKwInput{KwOutput}{Output}
\usepackage{graphicx}
\usepackage{mathrsfs}
\usepackage{amsmath}
\usepackage{amsfonts}
\usepackage{amssymb}
\usepackage{amsthm}
\usepackage{graphicx}
\usepackage{booktabs}
\usepackage{caption}
\usepackage{listings}
\usepackage{setspace}
\usepackage[mathscr]{eucal}
\usepackage{pgfplots}
\usepackage{hyperref}
\usepackage{wrapfig}
\usepackage{floatflt,epsfig}
\usepackage{ dsfont }
\usepackage{amscd}
\usepackage{tikz-cd}
\usepackage{fancyhdr}
\usepackage[all]{xy}
\usepackage{latexsym}
\usepackage{amscd}
\usepackage{pifont}
\usepackage{subfig}
\usepackage{easyReview}
\usepackage{subfig}
\usepackage{pstricks-add}
\usepackage{pgf,tikz,pgfplots}
\pgfplotsset{compat=1.15}
\usepackage{mathrsfs}
\usetikzlibrary{arrows}
\usetikzlibrary[patterns]
 
%
%
%
\def\NZQ{\Bbb}               

\def\ZZ{{\NZQ Z}}

%
%

%

\newcommand{\cA}{\mathcal{A}}
\newcommand{\cB}{\mathcal{B}}
\newcommand{\cC}{\mathcal{C}}

\newcommand{\cP}{\mathcal{P}}
\newcommand{\Pc}{\mathcal{P}}

\newcommand{\cH}{\mathcal{H}}

\newcommand{\cM}{\mathcal{M}}

\newcommand{\cU}{\mathcal{U}}
\newcommand{\cQ}{\mathcal{Q}}

\newcommand{\cR}{\mathcal{R}}
\newcommand{\Rc}{\mathcal{R}}
\newcommand{\cS}{\mathcal{S}}
\newcommand{\cT}{\mathcal{T}}

\newcommand{\rHP}{\mathrm{HP}}

\newcommand{\fR}{\mathfrak{R}}
\newcommand{\fC}{\mathfrak{C}}
\newcommand{\fF}{\mathfrak{F}}

\renewcommand{\qedsymbol}{$\square$}

%

%
\def\opn#1#2{\def#1{\operatorname{#2}}} 
%
\opn\chara{char} \opn\length{\ell} \opn\pd{pd} \opn\rk{rk}
\opn\projdim{proj\,dim} \opn\injdim{inj\,dim} \opn\rank{rank}
\opn\depth{depth} \opn\grade{grade} \opn\height{height}
\opn\embdim{emb\,dim} \opn\codim{codim}

\opn\Tr{Tr} \opn\bigrank{big\,rank}
\opn\superheight{superheight}\opn\lcm{lcm}
\opn\trdeg{tr\,deg}
	\opn\reg{reg} \opn\lreg{lreg} \opn\ini{in} \opn\lpd{lpd}
	\opn\size{size} \opn\sdepth{sdepth}
	\opn\link{link}\opn\fdepth{fdepth}\opn\lex{lex}\opn\dist{dist}
	%
	\opn\div{div} \opn\Div{Div} \opn\cl{cl} \opn\Cl{Cl}
	%
	%
	\opn\Spec{Spec} \opn\Supp{Supp} \opn\supp{supp} \opn\Sing{Sing}
	\opn\Ass{Ass} \opn\Min{Min}\opn\Mon{Mon}
	%
	%
	\opn\Ann{Ann} \opn\Rad{Rad} \opn\Soc{Soc}
	%
	%
	\opn\Im{Im} \opn\Ker{Ker} \opn\Coker{Coker} \opn\Am{Am}
	\opn\Hom{Hom} \opn\Tor{Tor} \opn\Ext{Ext} \opn\End{End}
	\opn\Aut{Aut} \opn\id{id}
	
	\opn\nat{nat}
	\opn\pff{pf}
	\opn\Pf{Pf} \opn\GL{GL} \opn\SL{SL} \opn\mod{mod} \opn\ord{ord}
	\opn\Gin{Gin} \opn\Hilb{Hilb}\opn\sort{sort}
	%
	%
	\opn\aff{aff} \opn
	\con{conv} \opn\relint{relint} \opn\st{st}
	\opn\lk{lk} \opn\cn{cn} \opn\core{core} \opn\vol{vol}
	\opn\link{link} \opn\star{star}\opn\lex{lex}\opn\set{set}
	\opn\gr{gr}
	
	%
	%
	
	\def\pot#1#2{#1[\kern-0.28ex[#2]\kern-0.28ex]}

	%
	%
	\opn\dirlim{\underrightarrow{\lim}}
	\opn\inivlim{\underleftarrow{\lim}}
	%
	%
	%

	%
	%
	\let\to=\rightarrow
	
	\def\Implies{\ifmmode\Longrightarrow \else
		\unskip${}\Longrightarrow{}$\ignorespaces\fi}
	\def\implies{\ifmmode\Rightarrow \else
		\unskip${}\Rightarrow{}$\ignorespaces\fi}
	\def\iff{\ifmmode\Longleftrightarrow \else
		\unskip${}\Longleftrightarrow{}$\ignorespaces\fi}

	\let\:=\colon
	%
	%
	%
	\let\epsilon\varepsilon
	\let\kappa=\varkappa
 %
	\def\qed{\ifhmode\textqed\fi
		\ifmmode\ifinner\quad\qedsymbol\else\dispqed\fi\fi}
	\def\textqed{\unskip\nobreak\penalty50
		\hskip2em\hbox{}\nobreak\hfil\qedsymbol
		\parfillskip=0pt \finalhyphendemerits=0}
	\def\dispqed{\rlap{\qquad\qedsymbol}}
	
	%
	\opn\dis{dis}
	\def\pnt{{\raise0.5mm\hbox{\large\bf.}}}
	
	\opn\Lex{Lex}
	
	

        
        \newtheorem{Theorem}{Theorem}[section]
	\newtheorem{Lemma}[Theorem]{Lemma}
	\newtheorem{Corollary}[Theorem]{Corollary}
	\newtheorem{Proposition}[Theorem]{Proposition}
	\newtheorem{Remark}[Theorem]{Remark}
	
	\newtheorem{Example}[Theorem]{Example}
	
	\newtheorem{Definition}[Theorem]{Definition}
	
	\newtheorem{Conjecture}[Theorem]{Conjecture}
	
        \newtheorem{Discussion}[Theorem]{Discussion}

	\begin{document}

	


        \title[Palindromicity of Switching Rook Polynomials]{Switching Rook Polynomials of Collections of Cells: Palindromicity and Domino-Stability}

    \author{Francesco Navarra}
    \address{Sabanci University, Faculty of Engineering and Natural Sciences, Orta Mahalle, Tuzla 34956, Istanbul, Turkey}
    \email{francesco.navarra@sabanciuniv.edu}
    
    \author{Ayesha Asloob Qureshi}      
    \address{Sabanci University, Faculty of Engineering and Natural Sciences, Orta Mahalle, Tuzla 34956, Istanbul, Turkey}	
    \email{aqureshi@sabanciuniv.edu, ayesha.asloob@sabanciuniv.edu}

     \author{Giancarlo Rinaldo}      
     \address{Department of Mathematics and Computer Sciences, Physics and Earth Sciences, University of Messina, Viale Ferdinando Stagno d'Alcontres 31, 98166 Messina, Italy}
     \email{giancarlo.rinaldo@unime.it}
		
    \keywords{Collection of cells, rook polynomial, palindromicity, Gorenstein.}

         \subjclass[2020]{05A15, 05B50, 05E40.}
    


        \maketitle

   \begin{abstract}
    The rook polynomial is a generating function that enumerates the number of ways to place rooks, with no two in the same row or column, on a collection of cells regarded as a pruned chessboard.
    In combinatorial commutative algebra, special attention is devoted to its variant, the \textit{switching rook polynomial}, which is conjectured to coincide with the $h$-polynomial of the $K$-algebra associated with the given collection of cells. In this context, palindromicity plays a crucial role, as it reflects the algebraic property of Gorensteinness. In this paper, we introduce a new combinatorial property, called \textit{domino-stability}, and we prove that the switching rook polynomial of a collection of cells $\cP$ is palindromic if and only if $\cP$ is domino-stable. Building upon this result, we derive new insights into the characterization of Gorenstein $K$-algebras arising from collections of cells.

\end{abstract}


\section*{Introduction}
Chess is one of the most famous board games in the world, and throughout its history it has inspired a wide variety of combinatorial problems. One of the most classical examples, known as the \textit{rook problem}, involves rooks, i.e., the chess pieces that can attack only horizontally or vertically. This problem consists in determining the \textit{rook polynomial}, that is, the generating function whose $k$-th coefficient is the number of ways to place $k$ rooks on a pruned chessboard with at most one rook per row and column. The rook polynomial of a skew diagram is intimately connected with the enumeration of permutations with specific properties, a relation first explored in \cite{KR} and further developed in \cite{R}. For a comprehensive treatment of permutations with forbidden positions, we refer the reader to \cite[Chapter~2]{S1}.  


Over the years, rook polynomials have gained prominence due to their applications in various areas (see \cite{GG, GG2} and the references therein), and several variants have been proposed, as in \cite{Ge}. One of the most relevant in combinatorial commutative algebra is the \textit{switching rook polynomial}. To provide a working formulation of it, let us consider a collection of cells $\cP$, viewed as a pruned
chessboard. We say that one or more rooks are \textit{non-attacking} on $\cP$ whenever no horizontal or vertical path of edge-adjacent cells in $\cP$ connects two of them. Furthermore, if $F_1$ and $F_2$ are two placements of $k$ non-attacking rooks on $\cP$, we identify $F_1$ and $F_2$ whenever $F_2$ can be obtained from $F_1$ through a finite sequence of switchings, i.e., by moving pairs of rooks occupying opposite corners of a rectangle entirely contained in $\cP$, from the diagonal to the anti-diagonal or vice versa.
Then, the \textit{switching rook polynomial} of $\cP$ is 
defined as
\[
\tilde{r}_\cP(t) = \sum_{k=0}^{r(\cP)} \tilde{r}_k(\cP)\, t^k,
\]
where $r(\cP)$ denotes the \textit{rook number} of $\cP$, i.e., the maximum number of non-attacking rooks that may be placed on $\cP$, and $\tilde{r}_k(\cP)$ is the number of equivalence classes of configurations of $k$ non-attacking rooks on $\cP$ under switching operations, for $0\leq k\leq r(\cP)$.

The significance of the switching rook polynomial arises in combinatorial commutative algebra from its connection with the Hilbert–Poincar\'e series of the $K$-algebras associated with collections of cells. Starting from a collection of cells $\cP$, one may construct the binomial ideal $I_{\cP}$ generated by the inner $2$-minors of $\cP$ in the polynomial ring $S_{\cP} = K[x_v : v \text{ is a vertex of } \cP]$, where $K$ is a field. The $K$-algebra $K[\cP] := S_{\cP}/I_{\cP}$ is called the coordinate ring of $\cP$. This construction was introduced in \cite{Q} as a generalization of the well-known theory of $2$-minor determinantal ideals of an $m \times n$ matrix of indeterminates. Refer to \cite{BV} for more details on determinantal ideals. The main focus of this research line is the study of the algebraic invariants of $K[\cP]$ via the combinatorial geometry of $\cP$. Since its introduction, this topic has been extensively investigated, leading to almost forty related publications, which the reader may consult in the recent survey \cite{Survey}.

What grants the switching rook polynomial its central relevance in the study of ideals of inner $2$-minors of a collection of cells $\cP$ is its matching with the $h$-polynomial of $K[\cP]$. A first connection was studied for polyominoes, that is, planar figures formed by joining unit squares edge to edge; for a comprehensive overview of polyominoes, see \cite{G}. It was initially observed in \cite{EHQR} that the Castelnuovo–Mumford regularity of $K[\cP]$ coincides with the rook number of $\cP$, when $\cP$ is an $L$-convex polyomino. Later, it was shown in \cite{RR} that the $h$-polynomial of $K[\cP]$ coincides with the switching rook polynomial of $\cP$, if $\cP$ is a free-hole polyomino containing no square tetromino. An analogous result was obtained in \cite{JN, KV, QRR} for polyominoes with at most one hole and possibly containing square tetrominoes. A further step forward was recently made in \cite{NQR}, where this equality between the $h$-polynomial and the switching rook polynomial was proven for convex collections of cells with quadratic Gröbner bases. In the same work, the authors also provided an algorithm for computing the switching rook polynomial and, on the basis of extensive computations, conjectured that this equality holds for arbitrary collections of cells (\cite[Conjecture 2.4]{NQR}). In this setting, the foundational work of Stanley \cite{S} provides a nice and strong inspiration for analyzing the palindromicity of the switching rook polynomial, as it relates the palindromicity of the $h$-polynomial of a standard graded $K$-algebra to its Gorenstein property. Recall that a polynomial $p(t)=p_0+\dots+p_dt^d$ in $\ZZ_{>0}[t]$ is said to be \emph{palindromic} if $p_i=p_{d-i}$ for all $i=0,\dots,\lfloor d/2\rfloor$. It is worth mentioning that palindromic polynomials are studied not only in algebraic combinatorics for their elegant symmetries, but also in fields such as analysis and cryptography for their applications (see \cite{J}); however, no analogous applications have been observed for the switching rook polynomial to date. 


In this work, we geometrically characterize all collections of cells with palindromic switching rook polynomial. For this purpose, we introduce a novel combinatorial and geometric property, termed \textit{domino-stability} (see Definition~\ref{Definition: domino stable}), and show that, if $\cP$ is a collection of cells, then its switching rook polynomial $\tilde{r}_{\cP}(t)$ is palindromic if and only if $\cP$ is domino-stable (see Theorem~\ref{Thm: main palindromicity}).


The paper is organized as follows. Section \ref{Section: Preliminaries} serves as a preliminary section and is divided into two subsections for clarity. In Subsection \ref{SubSection Collection of cells} we provide all relevant definitions and known properties of collections of cells, and in Subsection \ref{Subsection Switching} we formally define the switching rook polynomial and  discuss its properties. 


In Section~\ref{Section: domino stable} we introduce and investigate the notion of \emph{domino-stability} for a collection of cells (see Definition~\ref{Definition: domino stable}). We first show that every domino-stable collection of cells contains at least one stable square (see Proposition~\ref{Proposition: existence of a single square} and Definition~\ref{Defn: stable square}). We then examine the switching rook polynomial of a square polyomino, providing an explicit bijection that explains its palindromicity (see Discussion~\ref{Discussion: Square}), and conclude with Proposition~\ref{Proposition: A cell in Bi minus bar(Bi) has the condition 2}, showing that certain cells of a domino-stable collection of cells are aligned horizontally and vertically with some stable squares.

Section~\ref{Section3: sufficient} is devoted to establishing that domino-stability constitutes a sufficient condition for the palindromicity of the switching rook polynomial. The section begins with Lemma~\ref{Prop: unique way to place non-attacking rooks}, a preliminary yet fundamental result, which shows that for a domino-stable collection of cells $\cP$ there exists, up to switches, a unique placement of $r(\cP)$ non-attacking rooks on $\cP$, obtained by placing all rooks exclusively on the stable squares of $\cP$. This key result, together with Discussion~\ref{Discussion: Square} and Proposition~\ref{Proposition: A cell in Bi minus bar(Bi) has the condition 2}, is then employed in Proposition~\ref{Proposition: domino stable implies palindromic}, in order to explicitly construct a bijection between the equivalence classes of configurations of $k$ non-attacking rooks and those of $r(\cP)-k$ rooks. As a consequence, the switching rook polynomial of $\cP$ is shown to be palindromic.


In Section~\ref{Section4: necessary}, we establish that domino-stability is a necessary condition for the palindromicity of the switching rook polynomial (see Proposition~\ref{Proposition: palindromic implies P has the domino stable}). The core of the argument lies in Proposition~\ref{Proposition: P has NO the domino stable then r_1 less than rd-1}, where we show that if a collection of cells $\cP$ is not domino-stable and admits, up to switches, a unique configuration of $r(\cP)$ non-attacking rooks on $\cP$ (equivalently, $\tilde{r}_d(\cP)=1$ with $d=r(\cP)$), then necessarily $\tilde{r}_{d-1}(\cP) > \tilde{r}_1(\cP)$. In other words, the number of equivalence classes of $(r(\cP)-1)$ non-attacking rooks on $\cP$ strictly exceeds the number of cells of $\cP$.

Finally, in Section~\ref{Section: last, main theorem}, we present the main result of this paper, stated below.

\begin{Theorem}(Theorem \ref{Thm: main palindromicity})
    Let $\cP$ be a collection of cells and $\tilde{r}_{\cP}(t)$ be the switching rook polynomial of $\cP$. Then, $\tilde{r}_{\cP}(t)$ is palindromic if and only if $\cP$ is domino-stable.
\end{Theorem}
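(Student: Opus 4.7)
My plan is to reduce the theorem to the two propositions announced in the outline. Once Proposition~\ref{Proposition: domino stable implies palindromic} (domino-stable $\Rightarrow$ palindromic) and Proposition~\ref{Proposition: palindromic implies P has the domino stable} (palindromic $\Rightarrow$ domino-stable) are in place, the biconditional is immediate; so the content of the proof is really in those two propositions, and the final theorem is only a combination.

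For the \emph{sufficient} direction, I would construct an explicit bijection between the set of switching-equivalence classes of $k$-rook configurations on $\cP$ and those of $(r(\cP)-k)$-rook configurations. The key input is Lemma~\ref{Prop: unique way to place non-attacking rooks}, which identifies a unique (up to switches) maximal configuration supported entirely on the stable squares of $\cP$. This allows a local treatment: on each stable square I would invoke the bijection witnessing the palindromicity of the square polyomino's switching rook polynomial described in Discussion~\ref{Discussion: Square}, while cells outside the stable squares are handled via the horizontal and vertical alignment provided by Proposition~\ref{Proposition: A cell in Bi minus bar(Bi) has the condition 2}. Gluing these local data should then assemble into the required global bijection.

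For the \emph{necessary} direction, I would argue by contrapositive. Suppose $\cP$ is not domino-stable, and set $d=r(\cP)$. If the maximal rook configuration is not unique up to switches, then $\tilde{r}_d(\cP) \geq 2$ while $\tilde{r}_0(\cP)=1$, which already breaks palindromicity (the degenerate $d=0$ case being trivial). Otherwise $\tilde{r}_d(\cP)=1$, and then Proposition~\ref{Proposition: P has NO the domino stable then r_1 less than rd-1} gives $\tilde{r}_{d-1}(\cP) > \tilde{r}_1(\cP)$, which contradicts palindromicity since $\tilde{r}_1(\cP)$ equals the number of cells of $\cP$.

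I expect the main obstacle to lie in the sufficient direction, specifically in assembling the local bijections on stable squares into a globally well-defined bijection on switching classes. Concretely, one must verify both that the image of a configuration remains non-attacking and that the assignment is independent of the representative chosen within a switching class; the alignment statement of Proposition~\ref{Proposition: A cell in Bi minus bar(Bi) has the condition 2} is precisely the tool that should make these compatibilities work, but pinning the bookkeeping down is where all the combinatorial care will be concentrated.
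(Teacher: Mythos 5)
Your proposal is correct and follows essentially the same route as the paper: the theorem is obtained exactly by combining Proposition~\ref{Proposition: domino stable implies palindromic} and Proposition~\ref{Proposition: palindromic implies P has the domino stable}, and your sketches of both directions (the bijection assembled from Lemma~\ref{Prop: unique way to place non-attacking rooks}, Discussion~\ref{Discussion: Square}, and Proposition~\ref{Proposition: A cell in Bi minus bar(Bi) has the condition 2} for sufficiency, and the reduction to $\tilde{r}_d(\cP)=1$ followed by Proposition~\ref{Proposition: P has NO the domino stable then r_1 less than rd-1} for necessity) match the paper's arguments. No changes needed.
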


\noindent Notably, the above theorem largely recovers \cite[Corollary 5.16]{AJ}, which describes the palindromicity of the switching rook polynomial (referred to there as the nested rook polynomial) for skew diagrams.
Furthermore, we present an interesting and unexpected development arising from the above result, that is, a potential characterization of the Gorenstein coordinate rings of collections of cells. Assuming that \cite[Conjecture 2.4]{NQR} holds and in light of Stanley's results \cite{S}, we have the following immediate corollary.

\begin{Corollary}(Corollary \ref{Coro: Gorenstein})
    Let $\cP$ be a collection of cells satisfying \cite[Conjecture 2.4]{NQR}, i.e., such that the $h$-polynomial of $K[\cP]$ coincides with the switching rook polynomial of $\cP$.
    \begin{enumerate}
        \item If $K[\cP]$ is Gorenstein, then $\cP$ is domino-stable;
        \item Suppose that $K[\cP]$ is a Cohen-Macaulay domain, then the converse of (1) is true: that is, if $\cP$ is domino-stable, then $K[\cP]$ is Gorenstein.
    \end{enumerate}
    \end{Corollary}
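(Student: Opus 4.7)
The plan is to prove the biconditional by splitting it into two implications, each of which is essentially reduced, in the preceding sections, to a stand-alone proposition. Since a palindromic polynomial $p(t)=\sum p_i t^i$ of degree $d$ must in particular satisfy $p_0=p_d$ and $p_1=p_{d-1}$, the proof naturally breaks into verifying first the ``extreme'' equality on the leading and trailing coefficients and then the full interior symmetry $\tilde{r}_k(\cP)=\tilde{r}_{d-k}(\cP)$, where $d=r(\cP)$.

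For the sufficiency direction, I would directly invoke Proposition~\ref{Proposition: domino stable implies palindromic}. The core of the argument exploits three inputs built up in Sections~\ref{Section: domino stable} and~\ref{Section3: sufficient}: the uniqueness (up to switches) of the maximum placement on the stable squares provided by Lemma~\ref{Prop: unique way to place non-attacking rooks}, the explicit palindromic bijection for a single square polyomino obtained in Discussion~\ref{Discussion: Square}, and the alignment property ensuring that cells outside the stable squares are horizontally and vertically aligned with some stable square (Proposition~\ref{Proposition: A cell in Bi minus bar(Bi) has the condition 2}). Using these ingredients, one constructs for each $k\in\{0,\dots,d\}$ a bijection between the equivalence classes of $k$ non-attacking rook configurations and those of $d-k$ configurations, essentially by ``complementing'' the rooks within each stable square via the square-polyomino bijection while leaving the remaining placements rigid.

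For the necessity direction, I would argue by contraposition, invoking Proposition~\ref{Proposition: palindromic implies P has the domino stable}. Suppose $\cP$ is not domino-stable. If $\tilde{r}_d(\cP)>1$, palindromicity fails at once, since $\tilde{r}_0(\cP)=1$. If instead $\tilde{r}_d(\cP)=1$, then the crux is Proposition~\ref{Proposition: P has NO the domino stable then r_1 less than rd-1}, which yields $\tilde{r}_{d-1}(\cP) > \tilde{r}_1(\cP) = |\cP|$ and thus contradicts palindromicity at the pair of coefficients $(1,d-1)$. Combining the two cases concludes this direction.

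The step I expect to be most delicate sits inside the sufficiency direction: verifying that the locally defined bijection, obtained by complementing rook configurations within each stable square, is \emph{well-defined on switching equivalence classes of $\cP$ as a whole}. Since a switch in $\cP$ may involve rectangles straddling the boundary between a stable square and the cells outside it, one must show that any such switch descends coherently to compatible switches within the stable square decomposition. The alignment result of Proposition~\ref{Proposition: A cell in Bi minus bar(Bi) has the condition 2} is precisely the geometric input that should make this gluing work, but turning that into a clean invariance argument for the bijection requires careful bookkeeping on how rooks migrate between stable squares and their surrounding cells.
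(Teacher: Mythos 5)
There is a genuine gap: what you have proved is Theorem~\ref{Thm: main palindromicity} (the purely combinatorial equivalence ``$\tilde{r}_{\cP}(t)$ is palindromic if and only if $\cP$ is domino-stable''), not Corollary~\ref{Coro: Gorenstein}. The corollary is a statement about the Gorenstein property of the ring $K[\cP]$, and your argument never mentions $K[\cP]$, the $h$-polynomial, or the Cohen--Macaulay domain hypothesis. The missing ingredient is the algebraic bridge supplied by Stanley's results (Theorem~\ref{Thm:Stanley}, i.e.\ \cite[Theorems 4.1 and 4.4]{S}): if $K[\cP]$ is Gorenstein then its $h$-polynomial is palindromic, and conversely, \emph{provided $K[\cP]$ is a Cohen--Macaulay domain}, a palindromic $h$-polynomial forces Gorensteinness. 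Combining these with the standing hypothesis $h_{K[\cP]}(t)=\tilde{r}_{\cP}(t)$ and with Theorem~\ref{Thm: main palindromicity} yields the corollary immediately, which is exactly what the paper does.

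Concretely: for (1), Gorenstein $\Rightarrow$ palindromic $h$-polynomial (Stanley) $\Rightarrow$ palindromic $\tilde{r}_{\cP}(t)$ (hypothesis) $\Rightarrow$ domino-stable (Theorem~\ref{Thm: main palindromicity}); for (2), domino-stable $\Rightarrow$ palindromic $\tilde{r}_{\cP}(t)$ $\Rightarrow$ palindromic $h$-polynomial $\Rightarrow$ Gorenstein, where the last implication uses the Cohen--Macaulay domain assumption. The fact that this assumption plays no role anywhere in your write-up is the clearest symptom of the gap: as the paper notes via Stanley's Example 4.5, palindromicity of the $h$-polynomial alone does not imply Gorensteinness for general reduced Cohen--Macaulay rings, so part (2) cannot be purely combinatorial. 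Your summary of the two combinatorial implications (via Propositions~\ref{Proposition: domino stable implies palindromic} and \ref{Proposition: palindromic implies P has the domino stable}) is an accurate account of the proof of the main theorem, but on its own it does not prove the statement you were asked to prove.
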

    
\noindent This naturally motivates the search for supplementary geometric conditions on the structure of collections of cells to be imposed alongside domino-stability, in order to obtain a sufficient criterion for the Gorenstein property, in the non-domain case. 
The evidence presented below provides substantial support for the belief that domino-stability may, surprisingly, be sufficient to characterize Gorensteinness.

    \begin{Proposition}(Proposition \ref{Prop: computational evidence})
        Let $\cP$ be a domino-stable collection of cells with rank less than or equal to $10$ or a domino-stable polyomino with rank less than or equal to $12$. Then $K[\cP]$ is Gorenstein.
    \end{Proposition}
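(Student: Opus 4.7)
The plan is to establish this by exhaustive computer-algebra verification, since the statement concerns a bounded (though combinatorially large) range. The proof naturally splits into three phases: enumeration, filtering by domino-stability, and direct testing of the Gorenstein property for each surviving candidate.

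First, I would enumerate, up to translation (and, for efficiency, modulo the symmetries of the square lattice), all polyominoes with rank $\leq 12$ and all collections of cells with rank $\leq 10$. For polyominoes, this can be handled with standard fixed-polyomino generation algorithms seeded by rook number, refining the output by direct computation of $r(\cP)$ via the switching rook polynomial algorithm given in \cite{NQR}; for general (possibly disconnected and with holes) collections of cells, one can generate candidates as subsets of suitable bounding boxes whose dimensions are controlled by the rank, again restricting to those with the prescribed value of $r(\cP)$. Then, for every candidate $\cP$, I would test the condition from Definition~\ref{Definition: domino stable} directly on the cell data. This check is polynomial in the number of cells and can be implemented by iterating over pairs of adjacent cells and verifying the stability condition combinatorially, so the filtering step is cheap and can be done in bulk before any algebraic computation.

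Second, for each $\cP$ that passes the filter, I would construct the ideal $I_\cP$ of inner $2$-minors inside $S_\cP$ in Macaulay2 (or an equivalent system), compute the Krull dimension $d=\dim K[\cP]$ and the Castelnuovo--Mumford regularity, check the Cohen--Macaulay property via the Auslander--Buchsbaum formula $\pd(K[\cP])=n-d$, and finally verify Gorensteinness by computing the Cohen--Macaulay type and checking that it equals~$1$ (equivalently, by verifying that $\Ext^{n-d}_{S_\cP}(K[\cP],S_\cP)$ is cyclic, or that the $h$-polynomial of $K[\cP]$ is palindromic once Cohen--Macaulayness is established, the latter being guaranteed in the domain case by Theorem~\ref{Thm: main palindromicity} combined with \cite[Conjecture 2.4]{NQR} whenever that conjecture is verified case-by-case on the candidate).

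The chief obstacle is scalability: unrestricted enumeration is infeasible, and the Gorenstein test requires either a full minimal free resolution or a canonical-module computation, which is the costliest step. To keep the run within reach I would (i) quotient the candidate list by lattice symmetries, since $K[\cP]$ depends only on the isomorphism class of $\cP$; (ii) decompose any $\cP$ that is disconnected (or more generally splits along a separating cell-vertex configuration) into its components, so that $K[\cP]$ becomes a tensor product of smaller coordinate rings and Gorensteinness is tested componentwise; (iii) compute the switching rook polynomial with the algorithm of \cite{NQR} first, and for the majority of cases in which this polynomial coincides with the $h$-polynomial (already verified in many settings cited in the Introduction) deduce palindromicity of the $h$-polynomial from Theorem~\ref{Thm: main palindromicity} and conclude Gorensteinness from Stanley's theorem, reserving a full resolution computation only for the residual cases. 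With these reductions the verification becomes a finite, tractable computation, and the proposition reduces to reporting that in every case the outcome is positive.
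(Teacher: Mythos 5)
Your proposal is essentially the paper's proof: both reduce the statement to an exhaustive, symmetry-reduced enumeration of collections of cells (rank $\leq 10$) and polyominoes (rank $\leq 12$), dispose of the domain case theoretically via Stanley's criterion together with Theorem~\ref{Thm: main palindromicity} (the paper via Corollary~\ref{Coro: Gorenstein}(2), discarding prime ideals with \texttt{binomialIsPrime}), and verify the Gorenstein property directly in \texttt{Macaulay2} for the residual non-domain instances. The only implementation-level differences are that the paper filters domino-stability by testing palindromicity of the $h$-polynomial rather than checking Definition~\ref{Definition: domino stable} combinatorially, and uses the \texttt{isGorenstein} routine of the \texttt{TorAlgebra} package rather than a type or $\Ext$ computation.
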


    \noindent The proof of the above result is purely computational and is based on the framework developed in \cite{N2}. This evidence naturally suggests the following conjecture:

\begin{Conjecture}(Conjecture \ref{Conj Gorenstein})
Let $\cP$ be a collection of cells. Then, the following are equivalent:
    \begin{enumerate}
        \item $K[\cP]$ is Gorenstein;
        \item the $h$-polynomial of $K[\cP]$ is palindromic;
        \item the switching rook polynomial of $\cP$ is palindromic;
        \item $\cP$ is domino-stable.
    \end{enumerate}
\end{Conjecture}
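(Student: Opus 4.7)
The plan is to establish the conjectured equivalences by leveraging the combinatorial equivalence $(3)\iff(4)$ already supplied by Theorem~\ref{Thm: main palindromicity}, so that the remaining task reduces to linking the algebraic conditions $(1),(2)$ to the combinatorial ones. First I would try to prove that $K[\cP]$ is Cohen--Macaulay for every collection of cells $\cP$; this is known for many subfamilies (convex collections with quadratic Gr\"obner bases, free-hole polyominoes, certain one-hole polyominoes, $L$-convex polyominoes, etc.), and I would attempt to globalize these arguments, possibly by producing a regular sequence of linear forms or by showing shellability of the Stanley--Reisner complex associated with a squarefree initial ideal $\ini(I_\cP)$. Once Cohen--Macaulayness is in hand, Stanley's classical criterion immediately yields $(1)\iff(2)$, reducing the conjecture to proving $(2)\iff(3)$.

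To bridge $(2)$ and $(3)$, the most direct path is to settle \cite[Conjecture~2.4]{NQR}, which would identify the $h$-polynomial of $K[\cP]$ with the switching rook polynomial $\tilde{r}_\cP(t)$. Since that conjecture is itself open, an alternative strategy would be to prove only its \emph{palindromic shadow}: namely, that $h_{K[\cP]}(t)$ and $\tilde{r}_\cP(t)$ have the same degree (already suggested by regularity-equals-rook-number results such as those in \cite{EHQR,NQR}) and are simultaneously palindromic. For this I would attempt to set up a bijection between a monomial $K$-basis of $K[\cP]/(\theta)$, for a suitable linear system of parameters $\theta$, and the equivalence classes of non-attacking rook configurations on $\cP$, and then show that the involution on rook configurations produced in the proof of Theorem~\ref{Thm: main palindromicity} lifts to a $K$-linear duality on this basis.

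The main obstacle is precisely this algebraic-combinatorial bridge: \cite[Conjecture~2.4]{NQR} is the central gap, and without it one cannot transfer the clean combinatorial symmetry from Theorem~\ref{Thm: main palindromicity} to the $h$-polynomial. A complementary, more direct attack would target $(4)\Rightarrow(1)$ by constructing, from the stable squares of a domino-stable $\cP$, an explicit isomorphism between $K[\cP]$ and its canonical module, for instance by exhibiting a self-dual minimal free resolution whose symmetry mirrors the switching duality on rook placements; the reverse implication $(1)\Rightarrow(4)$ would then follow once Cohen--Macaulayness and the palindromic fragment of \cite[Conjecture~2.4]{NQR} are available. The computational evidence provided by Proposition~\ref{Prop: computational evidence} is encouraging in that it suggests no extra hypothesis on $\cP$ (such as $K[\cP]$ being a domain, as required in Corollary~\ref{Coro: Gorenstein}) should be needed; however, this same feature makes the construction of the canonical isomorphism especially delicate when $I_\cP$ has several minimal primes, and I expect this non-domain case to be the principal difficulty.
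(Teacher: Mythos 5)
This statement is a \emph{conjecture} in the paper, not a theorem: the authors do not prove it. What they actually establish is the equivalence $(3)\Leftrightarrow(4)$ (Theorem~\ref{Thm: main palindromicity}), the conditional implications of Corollary~\ref{Coro: Gorenstein} (which require the hypothesis $h_{K[\cP]}(t)=\tilde{r}_\cP(t)$, and for the converse direction additionally that $K[\cP]$ is a Cohen--Macaulay domain), and the purely computational evidence of Proposition~\ref{Prop: computational evidence} for small ranks. Your proposal is likewise a research plan rather than a proof, and it honestly flags the central gap (\cite[Conjecture~2.4]{NQR}); to that extent it is consistent with the paper's own state of knowledge.

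However, one step of your plan is concretely wrong as stated. You claim that once Cohen--Macaulayness of $K[\cP]$ is established, ``Stanley's classical criterion immediately yields $(1)\Leftrightarrow(2)$.'' It does not. Stanley's Theorem~4.1 gives $(1)\Rightarrow(2)$ for any graded Gorenstein algebra, but the converse $(2)\Rightarrow(1)$ in Stanley's Theorem~4.4 requires $K[\cP]$ to be a Cohen--Macaulay \emph{domain}. The paper itself stresses this point via Stanley's Example~4.5: $K[x,y,z,w]/(xw,yw,xyz)$ is a reduced Cohen--Macaulay algebra with palindromic $h$-polynomial that is not Gorenstein. The whole surprise of the conjecture is precisely that palindromicity appears to force Gorensteinness for inner $2$-minor ideals even when $I_\cP$ is not prime, and no general algebraic theorem delivers that implication; your plan would need a genuinely new argument for $(2)\Rightarrow(1)$ in the non-domain case (your final paragraph gestures at this, but offers no mechanism). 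A second unsupported step is the blanket assertion that $K[\cP]$ is Cohen--Macaulay for \emph{every} collection of cells: this is known only for special families and is not claimed in the paper; building the whole chain of equivalences on it leaves the argument resting on two open problems rather than one.
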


\noindent This phenomenon is rather surprising. As noted by Stanley in \cite[Example 4.5]{S}, even for relatively simple monomial ideals, weakening the primality assumption shows that palindromicity does not guarantee the Gorenstein property: for example, $K[x,y,z,w]/(xw, yw, xyz)$ is a reduced Cohen–Macaulay $K$-algebra with a palindromic $h$-polynomial, yet it is not Gorenstein. In contrast, for ideals of inner $2$-minors of collections of cells, palindromicity alone appears sufficient to ensure the Gorenstein property.

\section{Collections of Cells and Switching Rook Polynomials}\label{Section: Preliminaries}

    In this section, we present the combinatorial preliminaries on collections of cells and provide the definition and basic properties of the switching rook polynomial.
    
    \subsection{Collection of cells.}\label{SubSection Collection of cells} Let $(i,j),(k,l) \in \mathbb{Z}^2$. We define the partial order $(i,j) \leq (k,l)$ if and only if $i \leq k$ and $j \leq l$. Given $a = (i,j)$ and $b = (k,l)$ in $\mathbb{Z}^2$ with $a \leq b$, the set $[a,b]=\{(m,n)\in \mathbb{Z}^2: i\leq m\leq k,\ j\leq n\leq l \}$ is called an \textit{interval} of $\mathbb{Z}^2$.  If $i < k$ and $j < l$, the interval $[a,b]$ is said to be \textit{proper}, in which case we refer to $a$ and $b$ as diagonal corners of $[a,b]$, and to $c = (i,l)$ and $d = (k,j)$ as the anti-diagonal corners. If $j=l$ (or $i=k$) then $a$ and $b$ are in a \textit{horizontal} (or \textit{vertical}) \textit{position}. We denote by $]a,b[$ the set $\{(m,n)\in \mathbb{Z}^2: i< m< k,\ j< n< l\}$. A proper interval $C = [a,b]$ where $b = a + (1,1)$ is called a \textit{cell} of $\mathbb{Z}^2$. The points $a$, $b$, $c$, and $d$ are respectively referred to as the \textit{lower left}, \textit{upper right}, \textit{upper left} and \textit{lower right} \textit{corners} of $C$. We denote the set of \textit{vertices} and \textit{edges} of $C$ by $V(C)=\{a,b,c,d\}$ and $E(C)=\{\{a,c\},\{c,b\},\{b,d\},\{a,d\}\}$, respectively. 
    
    Let $\cP$ be a non-empty collection of cells in $\mathbb{Z}^2$. We define the vertex and edge sets as $V(\cP)=\bigcup_{C\in \cP}V(C)$ and $E(\cP)=\bigcup_{C\in \cP}E(C)$, respectively. The \textit{rank} of $\cP$, denoted by $\vert \cP\vert$, is the number of cells in $\cP$. 
    We say that $\cP$ is a \textit{polyomino} if for any two distinct cells $C$ and $D$ in $\cP$ there exists a sequence of cells $\cC: C=C_1,\dots,C_m=D$ of $\cP$ such that $C_i\cap C_{i+1}$ is an edge of $C_i$ and $C_{i+1}$, for all $i=1,\dots,m-1$. Analogously, $\cP$ is \textit{weakly connected} if for any two distinct cells in $\cP$ there exists such a sequence with $C_i\cap C_{i+1}\neq \emptyset$, for all $i=1,\dots,m-1$. A \textit{connected component} $\cP'$ of $\cP$ is a polyomino contained in $\cP$ which is maximal with respect to the set inclusion, that is if $A\in \cP\setminus \cP'$ then $\cP'\cup \{A\}$ is not a polyomino. 
    For instance, in Figures \ref{Figure: Polyomino + weakly conn. coll. of cells + bounding rectangle} (A) and (B) we illustrate a polyomino and a weakly connected collection of cells with three connected components, respectively. 
	
	\begin{figure}[h]
		\centering
		\subfloat[]{\includegraphics[scale=0.5]{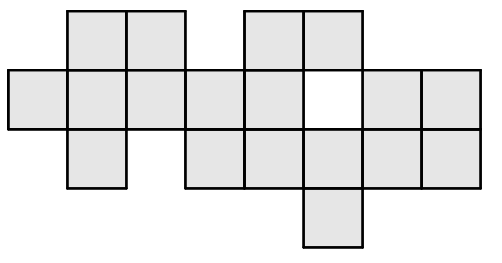}}\qquad
		\subfloat[]{\includegraphics[scale=0.5]{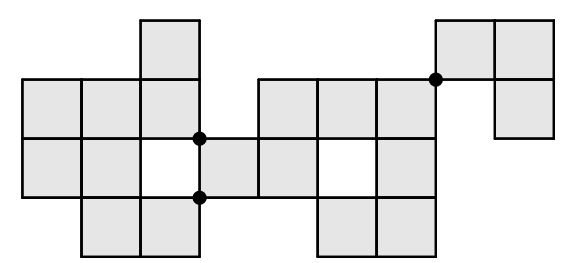}}\qquad
	    \subfloat[]{\includegraphics[scale=0.5]{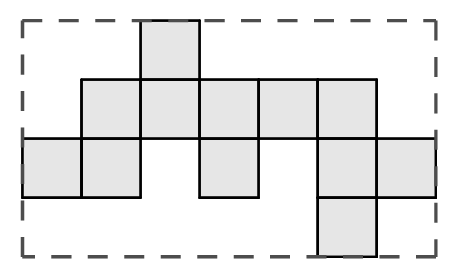}}
            \caption{A polyomino, a weakly connected collection of cells and a bounding rectangle of a polyomino.}
		\label{Figure: Polyomino + weakly conn. coll. of cells + bounding rectangle}
	\end{figure}
	
	Consider two cells $A$ and $B$ of $\mathbb{Z}^2$ with $a=(i,j)$ and $b=(k,l)$ as the lower left corners of $A$ and $B$, respectively, with $a\leq b$. The \textit{cell interval} $[A,B]$, also called a \textit{rectangle}, is the set of the cells of $\mathbb{Z}^2$ with lower left corner $(r,s)$ such that $i\leqslant r\leqslant k$ and $j\leqslant s\leqslant l$. The \textit{width} and the \textit{height} of $[A,B]$, denoted by $w([A,B])$ and $h([A,B])$, represent the number of horizontal and vertical cells of $[A,B]$, respectively. Specifically, $w([A,B])=k-i+1$ and $h([A,B])=l-j+1$. The \textit{size} of $[A,B]$ is defined by $h([A,B])w([A,B])$ and is denoted by $\mathrm{size}([A,B])$. We say that $[A,B]$ is \textit{horizontal} (respectively, \textit{vertical}) if $h([A,B])\geq w([A,B])$ (respectively, $h([A,B]) \leq w([A,B])$). If $h([A,B])=w([A,B])$, then $[A,B]$ is said to be a \textit{square}. Note that a square can be regarded simultaneously as both a vertical and a horizontal rectangle. If $\cP$ is a collection of cells, then we say that $[A,B]$ is the \textit{minimal bounding rectangle} of $\cP$ if $\cP\subseteq [A,B]$ and there does not exist any rectangle of $\ZZ^2$ containing $\cP$ that is properly contained in $[A,B]$ (Figure \ref{Figure: Polyomino + weakly conn. coll. of cells + bounding rectangle} (C)). A \textit{maximal rectangle} of $\cP$ is a rectangle $\cR$ of cells of $\cP$ such that no other rectangle whose cells are all in $\cP$ properly contains $\cR$. 
    If $(i,j)$ and $(k,l)$ are in horizontal (or vertical) position, we say that the cells $A$ and $B$ are in a \textit{horizontal} (or \textit{vertical}) \textit{position}. 
    Consider two cells, $A$ and $B$, of $\cP$ in a vertical (respectively, horizontal) position. The cell interval $[A,B]$ is called a \textit{column} (respectively, a \textit{row}) of $\cP$ if the cells $A$ and $B$ are in vertical (respectively, horizontal) position, all the cells in $[A,B]$ belong to $\cP$, and there is no cell interval $[A',B']$ with $A'$ and $B'$ in vertical (respectively, horizontal) position such that $[A,B]\subsetneq [A',B']$. The polyomino in Figure~\ref{Figure: Polyomino + weakly conn. coll. of cells + bounding rectangle} (A) has eight maximal rectangles, nine columns and seven rows. \\
    Since an interval in $\mathbb{Z}^2$ uniquely corresponds to a cell interval in $\mathbb{Z}^2$, we associate an interval $I$ of $\mathbb{Z}^2$ with its corresponding rectangle, denoted by $\cR(I)$. If $I=[a,b]$ and $c$, $d$ are the anti-diagonal corners of $I$, then the two cells of $\cR(I)$ containing $a$ or $b$ (respectively, $c$ or $d$) are called \textit{diagonal} (respectively, \textit{anti-diagonal}) \textit{corner cells} of $\cR(I)$. We say that an interval $I$ is an \textit{inner interval} of $\cP$ if $\cR(I)\subseteq \cP$.
  
    \subsection{Switching rook polynomial} \label{Subsection Switching}
    
We now introduce the terminology of non-attacking rooks on a collection of cells and the switching rook polynomial.

Let $\cP$ be a collection of cells. We say that two rooks $R_1$ and $R_2$ \textit{coincide} or \textit{overlap} if they are located in the same cell of $\cP$. Two rooks $R_1$ and $R_2$ are in \textit{attacking position} or \textit{attacking rooks} in $\cP$ if they do not overlap and there exists a row or a column of $\cP$ that contains the cells where $R_1$ and $R_2$ are placed. In contrast, two rooks are in \textit{non-attacking position} or \textit{non-attacking rooks} in $\cP$ if they do not overlap and they are not in attacking position. A \textit{$j$-rook configuration} in $\cP$ is a set of $j$ rooks arranged in non-attacking positions within $\cP$, where $j\geq 0$; for convention, the $0$-rook configuration is $\emptyset$. We say that a $j$-rook configuration in $\cP$ is \textit{maximal} if there does not exist any $k$-rook configuration in $\cP$, with $k > j$, that properly contains it. The \textit{rook number} $r(\cP)$ is the maximum number of rooks that can be placed in $\cP$ in non-attacking positions. In Figures~\ref{Figure: example rook configuration} (A) and (B), we illustrate a maximal 5-rook configuration and an $r(\cP)$-rook configuration in the same collection $\cP$ of cells, respectively, where $r(\cP) = 7$. \\
We denote by $\cR_j(\cP)$ the set of all $j$-rook configurations of $\cP$ for $j \in \{0, \dots, r(\cP)\}$, with the convention that $\cR_0(\cP)$ represents the empty configuration (i.e., $|\cR_0(\cP)|=1$). Note that the collection $\bigcup_{j=0}^{r(\cP)}\cR_j(\cP)$ forms a simplicial complex, known as the \textit{rook complex} of $\cP$. Two non-attacking rooks in $\cP$ are said to be in \textit{switching position} if they are placed in the diagonal (respectively, anti-diagonal) corner cells of $\cR(I)$, for some inner interval $I$ of $\cP$, and in such a case we say that the rooks are in a \textit{diagonal} (respectively, \textit{anti-diagonal}) position. \\
Fix $j\in \{0,\dots, r(\cP)\}$. Let $F\in \cR_j(\cP)$. Consider two switching rooks, $R_1$ and $R_2$, within $F$, positioned diagonally (or anti-diagonally) in $\cR(I)$ for some inner interval $I$. Let $R_1'$ and $R_2'$ be the rooks in the anti-diagonal (or diagonal, respectively) cells of $\cR(I)$. Then the set $(F\backslash \{R_1, R_2\}) \cup \{R_1', R_2'\}$ also belongs to $\cR_j(\cP)$. This operation of replacing $R_1$ and $R_2$ by $R_1'$ and $R_2'$ is called a \textit{switch of $R_1$ and $R_2$}. This defines an equivalence relation $\sim$ on $\cR_j(\cP)$: $F_1\sim F_2$ if $F_2$ can be obtained from $F_1$ through a sequence of switches. In this case, we say that $F_1$ and $F_2$ are \textit{equivalent with respect to $\sim$} (or \textit{same up to switches}). Figures \ref{Figure: example rook configuration} (C) and (D) show two $4$-rook configurations that are equivalent with respect to $\sim$.\\
Let $\tilde{\cR}_j(\cP) = \cR_j(\cP)/\sim$ be the quotient set of equivalence classes. We define $\tilde{r}_j(\cP)=\vert \tilde{\cR}_j(\cP)\vert $ for $j\in \{0,\dots, r(\cP)\}$, with the convention that $\tilde{r}_0(\cP)=1$. The \textit{switching rook polynomial} of $\cP$ is then defined as the polynomial in $\mathbb{Z}_{> 0}[t]$:
$$\tilde{r}_{\cP}(t)=\sum_{j=0}^{r(\cP)}\tilde{r}_j(\cP)t^j.$$
For example, for the collection of cells in Figures \ref{Figure: example rook configuration} (C) or (D), $\tilde{r}_{\cP}(t)=1+8t+19t^2+14t^3+3t^4.$ In general, the switching rook polynomial of a collection of cells can be computed using the routine implemented in \cite{N1}. 

     \begin{figure}[h]
		\centering
            \subfloat[]{\includegraphics[scale=0.7]{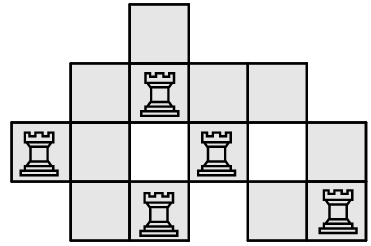}}\quad
            \subfloat[]{\includegraphics[scale=0.7]{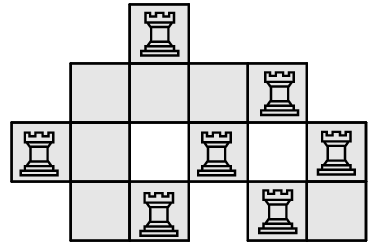}}\quad
		\subfloat[]{\includegraphics[scale=0.7]{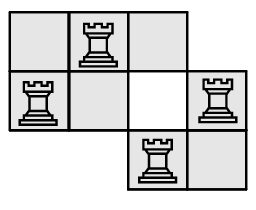}}\quad
            \subfloat[]{\includegraphics[scale=0.7]{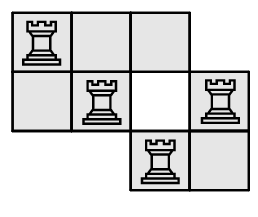}}
            \caption{Examples of some arrangements of non-attacking rooks.}
		\label{Figure: example rook configuration}
    \end{figure} 

    \begin{Definition}\rm \label{Defn: Canonical in a rectangle}
        Let $\cQ$ be a rectangle with $w(\cQ)=m$ and $h(\cQ)=n$. After a suitable translation in the plane, we may assume that $\cQ = \cR([(1,1),(m+1,n+1)])$. We denote by $A_{ij}$ the cell of $\cQ$ with lower left corner $(i,j)$, for all $i\in [m]$ and $j \in [n]$. For simplicity, we can identify a rook arrangement with the set of the cells of $\cQ$ where the rooks are placed. We say that a $k$-rook configuration $\fR=\{A_{i_1 j_1},\dots,A_{i_k j_k}\}$ in $\cQ$ is \textit{canonical} if then the indices satisfy $i_1 < \dots < i_k$ and $j_1 < \dots < j_k$.
    \end{Definition} 

    \begin{Remark}\rm\label{Remark: unique canonical rook configuration for rectangle}
        Let $\cQ$ be a rectangle with notations as in Definition \ref{Defn: Canonical in a rectangle}. Then, for every $k$-rook configuration $\fR$ in $\cQ$ there exists a unique canonical $k$-rook configuration $\fC$ in $\cQ$ such that $\fC\sim \fR$. Indeed, if $\fR=\{A_{i_1 j_1},\dots,A_{i_k j_k}\}$ is a non canonical $k$-rook configuration in $\cQ$, then we have two rooks of $\fR$, namely $A_{i_t j_t}$ and $A_{i_{t'} j_{t'}}$ for some $t,t'\in [k]$, that are in anti-diagonal position. Assume $i_t<i_{t'}$ and $j_t>j_{t'}$. Then one can replace $A_{i_t j_t}$ and $A_{i_{t'} j_{t'}}$ by $A_{i_{t} j_{t'}}$ and $A_{i_{t'} j_{t}}$ in $\fR$, respectively. Repeating the same process, if needed, one obtains the canonical form of $\fR$ after finite number of steps. 
    \end{Remark}

    The aim of next discussion is to suitably extend the previous definition to the context of a collection of cells. Specifically, for every $k$-rook configuration $\fR$ in a collection $\cP$ of cells, we define a unique $k$-rook configuration $\fC$ in $\cP$ such that $\fC \sim \fR$ and the arrangement of the rooks of $\fF$ in the maximal rectangles of $\cP$ is canonical in according to Definition \ref{Defn: Canonical in a rectangle}. This configuration $\fF$ will serve as the representative of the class $[\fR]_{\sim}$.

    \begin{Discussion}\rm\label{Discussion: canonical rook configuration}
    Let $\cP$ be a collection of cells and $\cM(\cP) = \{\cB_1, \dots, \cB_p\}$ be the set of maximal rectangles of $\cP$. Let $\fR$ be a $k$-rook configuration in $\cP$. Let $\fR_i$ be the set of rooks of $\fR$ which are placed on the cells of $\cB_i$ and $r_i=\vert \fR_i\vert$, for $i \in [p]$. We perform a sequence of switches step by step over the maximal rectangles $\cB_1, \dots, \cB_p$ of $\cP$, to transform $\fR$ into a $k$-rook configuration in $\cP$ such that the $r_i$-rook configuration in $\cB_i$ is canonical, for all $i\in [p]$.
    
    Let $1\leq i\leq  p$. Denote by $\fR(\cB_i,\fC_{i-1})$ the set of rooks of $\cC_i$ which are placed on the cells of $\cB_i$ (with $\cC_0=\fR$). By Remark \ref{Remark: unique canonical rook configuration for rectangle}, there exists a unique canonical $r_i$-rook configuration in $\cB_i$, denoted by $\fC(\cB_i,\fC_{i-1})$, such that $\fC(\cB_i,\fC_{i-1})\sim\fR(\cB_i,\fC_{i-1})$. We then define $\mathfrak{C}_i=(\fR\setminus \fR(\cB_i,\fC_{i-1})) \cup \fR(\cB_i,\fC_i)$. At the final step, when we obtain $\fC_p$, we set $\fF = \fC_p$. It is straightforward that $\fF$ is a $k$-rook configuration in $\cP$ with $\fF \sim \fR$. For each $i \in [p]$, let $\mathfrak{F}_i$ be the set of rooks in $\fF$ that are placed on the cells of $\cB_i$, and let us show that $\fF_i$ is canonical. Suppose, for contradiction, that there exists an index $j \in [p]$ such that $\fF_j$ is not canonical. This means there are two rooks in $\fF_j$ that are in anti-diagonal position. If $\cB_k$ for some $k > j$ contains both $A_1$ and $A_2$, then the diagonal position of $R_1$ and $R_2$ is trivially preserved. If $A_1$ lies in $\cB_{j_1}$ and $A_2$ in $\cB_{j_2}$ with $j_1, j_2 > j$ and $j_1 \neq j_2$, then the diagonal position is also preserved. Indeed, let $C_1$ and $C_2$ (respectively, $H_1$ and $H_2$) be the maximal columns (respectively, rows) of $\cP$ that contain $A_1$ and $A_2$, respectively. Define: $\bar{C}_1=C_1\setminus \cB_j$, $\bar{C}_2=C_2\setminus \cB_j$, $\bar{H}_1=\{C\in H_1:C \text{ is at West of } A_1\}$ and $\bar{H}_2=\{C\in H_2:C \text{ is at East of } A_2\}$. Then $R_1$ (respectively, $R_2$) can be moved to a cell in $\bar{C}_1 \cup \bar{H}_1$ (respectively, $\bar{C}_2 \cup \bar{H}_2$), so in the new configuration, $R_1$ and $R_2$ are either in diagonal position or no longer in switching position. Hence, two rooks in $\fF_j$ cannot be in anti-diagonal position, which yields a contradiction. \\
    Hence, $\fF$ is a $k$-rook configuration in $\cP$ such that $\fF\sim \fR$ and $\fF_i$ is canonical, for every $i\in [p]$.
    \end{Discussion}


    \begin{Definition}\rm\label{Defn: canonical genreal}
       Let $\cP$ be a collection of cells, and let $\cM(\cP) = \{\cB_1, \dots, \cB_p\}$ denote the set of maximal rectangles in $\cP$. Given a $k$-rook configuration $\fR$ in $\cP$, the configuration $\fF$ defined in Discussion \ref{Discussion: canonical rook configuration} is called the \textit{canonical} $k$-rook configuration of $\fR$ (with respect to $\cB_1, \dots, \cB_p$). 
    \end{Definition}

     
     \begin{Remark}\rm
        In general, it is important to note that, given a $k$-rook configuration $\fR$ in $\cP$, there does not necessarily exist a unique $k$-rook configuration $\fC$ in $\cP$ such that $\fC \sim \fR$ and the arrangement of the rooks of $\fF$ in the maximal rectangles of $\cP$ is canonical. In Figure \ref{Figure: canonical and non-canonical configuration} (A) we have a $3$-rook configuration $\fR$, and in (B) and (C) two $3$-rook configuration in $\cP$, namely $\fF_1$ and $\fF_2$, such that $\fF_1,\fF_2\sim \fR$ and the arrangements of the rooks of both $\fF_1$ and $\fF_2$  in the maximal rectangles of $\cP$ is canonical. Uniqueness is achieved only after fixing an ordering of the maximal rectangles of $\cP$. Indeed, different labeling may lead to different canonical $k$-rook configurations. If we fix the labeling $\cB_1 = [A,B]$, $\cB_2 = [C,D]$, $\cB_3 = [A,D]$, and $\cB_4 = [C,B]$, the corresponding canonical configuration of $\fR$ is shown in Figure \ref{Figure: canonical and non-canonical configuration} (B). On the other hand, if we instead fix the labeling $\cB_1 = [C,D]$, $\cB_2 = [A,B]$, $\cB_3 = [A,D]$, and $\cB_4 = [C,B]$, then the canonical configuration becomes the one shown in (C).
         
    \begin{figure}[h]
		\centering
		\subfloat[$\fR$]{\includegraphics[scale=0.65]{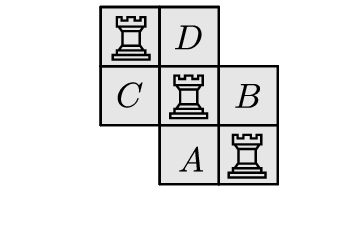}}
		\subfloat[$\fF_1$]{\includegraphics[scale=0.65]{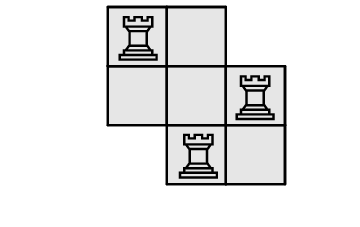}}
		\subfloat[$\fF_2$]{\includegraphics[scale=0.65]{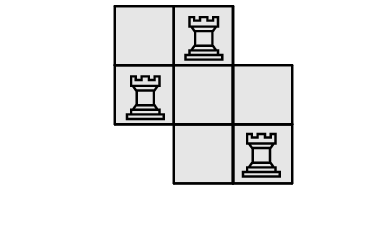}}
            \caption{Three $3$-rook configurations in $\cP$.}
		\label{Figure: canonical and non-canonical configuration}
    \end{figure}
     \end{Remark}

\section{A Combinatorial Property: Domino-Stability of a Collection of Cells}\label{Section: domino stable}

In this section, we introduce a new combinatorial property, called \textit{domino-stability}, which is closely related to the geometric structure of a collection of cells (see Definition \ref{Definition: domino stable}). This property plays a crucial role, as it characterizes precisely those collections of cells whose switching rook polynomial is palindromic.\\
 We begin by introducing a new operation on axis-aligned rectangles, termed \textit{gluing}, which consists of joining them along parallel edges to form a larger rectangle.

\begin{Definition}[Gluing of axis-aligned rectangles]\label{Definition: Gluing of rectangles}\rm
   Let $\{\mathcal{R}_{ij}: i \in [r], j \in [s]\}$ be a non-empty family of axis-aligned rectangles and let $\mathcal{B} = \bigcup \left\{ \mathcal{R}_{ij} : i \in [r],\ j \in [s] \right\}$. We denote by $A_{ij}^{(m,n)}$ the $(m,n)$-th cell of $\mathcal{R}_{ij}$, indexed from left to right and from bottom to top (see Figure \ref{Figure: gluing rectangles} (A)). For all $i \in [r]$ and $j \in [s]$, we define $\ell_i = l(\mathcal{R}_{i1})$ and $\omega_j = w(\mathcal{R}_{1j})$. Moreover, we set $\ell = \sum_{p=1}^r \ell_p$ and $\omega = \sum_{q=1}^s \omega_q$. Let $\mathcal{R}$ be a rectangle with cell $B_{uv}$, where $1 \leq u \leq \ell$ and $1 \leq v \leq \omega$, indexed from left to right and from bottom to top. The rectangle $\mathcal{R}$ is said to be the \textit{gluing} of $\cB$, and it is denoted by $\mathrm{G}(\cB)$. \\
   Furthermore, the $(m,n)$-th cell of $\mathcal{R}_{ij}$ is mapped to $B_{uv}$, where $u = m + \sum_{p=1}^{i-1} \ell_p$ and $v = n + \sum_{q=1}^{j-1} \omega_q$, and conversely, $B_{uv}$ maps to $A_{ij}^{(m,n)}$, where $m = u - \sum_{p=1}^{i-1} \ell_p$ and $n = v - \sum_{q=1}^{j-1} \omega_q$. In this case, we say that $A_{ij}^{(m,n)}$ (respectively, $B_{uv}$) \textit{corresponds to} $B_{uv}$ (respectively, $A_{ij}^{(m,n)}$)\textit{, under the gluing,} or that $A_{ij}^{(m,n)}$ and $B_{uv}$ are \textit{corresponding cells under gluing}.

\begin{figure}[h]
		\centering
		\subfloat[$\cB = \cR_{11}\cup \cR_{21}\cup \cR_{31}\cup \cR_{12}\cup \cR_{22}\cup  \cR_{32}$]{\includegraphics[scale=0.65]{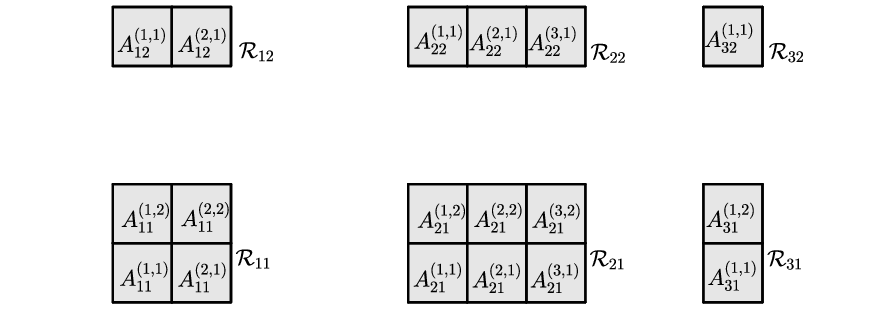}}
            \subfloat[$\cR=\mathrm{G}(\cB)$]{\includegraphics[scale=0.65]{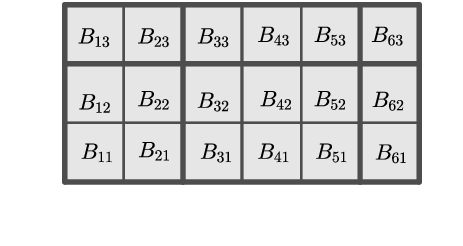}}
            \caption{A family of axis-aligned rectangles and its gluing.}
		\label{Figure: gluing rectangles}
    \end{figure}
\end{Definition}

     A vertex $a \in V(\cP)$ is called an \textit{interior vertex} of $\cP$ if $a$ is a vertex of four distinct cells of $\cP$, otherwise it is called a \textit{boundary vertex} of $\cP$. 

\begin{Proposition}\label{Lemma: The gluing of the bar of B_i is a rectangle polyomino}
    Let $\cP$ be a collection of cells and $\cM(\cP) = \{\cB_1, \dots, \cB_p\}$ be the set of all maximal rectangles of $\cP$. Set $\overline{\cB_i} = \cB_i \setminus \left(\bigcup_{j\in [p] \setminus \{i\}} \cB_j\right)$, for $i\in [p]$ and assume that $\overline{\cB_i}\neq \emptyset$. Then $\overline{\cB_i}$ consists of a union of axis-aligned rectangles. 
\end{Proposition}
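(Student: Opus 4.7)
The plan is to prove the stronger structural fact that $\overline{\cB_i}$ admits a canonical decomposition as a disjoint union of axis-aligned sub-rectangles of $\cB_i$ arranged in a grid pattern, which naturally matches the family data required by Definition~\ref{Definition: Gluing of rectangles}. Throughout, I write $\cB_i$ with cells at $x\in [a,b]$, $y\in [c,d]$ in local coordinates.

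The first key step is the following claim: for every cell $A\in \cB_i\setminus \overline{\cB_i}$ there exists a maximal rectangle $\cB_l$ of $\cP$ with $l\neq i$ such that $A\in \cB_l$ and $\cB_i\cap \cB_l$ spans either the full width or the full height of $\cB_i$. To prove it, write $A=(x_A,y_A)$ and pick any $\cB_j\neq \cB_i$ with $A\in \cB_i\cap \cB_j$. Since both rectangles are maximal in $\cP$ and distinct, $\cB_j$ must extend beyond $\cB_i$ in at least one direction, either horizontally or vertically. In the horizontal case, the row of $\cB_j$ at height $y_A$ already extends beyond the $x$-range $[a,b]$; together with the row of $\cB_i$ at the same height, it produces a horizontal strip $\cS$ of cells of $\cP$ of height one containing $A$ and properly overflowing $[a,b]$. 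Choosing any maximal rectangle $\cB_l$ of $\cP$ containing $\cS$ forces the $x$-range of $\cB_l$ to properly contain $[a,b]$, which implies both $\cB_l\neq \cB_i$ and the fact that $\cB_i\cap \cB_l$ is a sub-rectangle of $\cB_i$ spanning its full width. The vertical case is symmetric and yields a full-height strip.

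With the claim in hand, $\cB_i\setminus \overline{\cB_i}$ can be expressed as the union of two families of sub-rectangles: the family $\cH$ of intersections $\cB_i\cap \cB_l$ spanning the full width of $\cB_i$, and the family $\cV$ of those spanning the full height. Each element of $\cH$ is determined by the set of rows it occupies, so $\bigcup \cH$ corresponds to a subset $Y_H\subseteq [c,d]$; similarly $\bigcup \cV$ corresponds to a subset $X_V\subseteq [a,b]$. Consequently, $\overline{\cB_i}$ consists exactly of the cells $(x,y)\in \cB_i$ with $y\notin Y_H$ and $x\notin X_V$. Decomposing the complementary sets $[c,d]\setminus Y_H$ and $[a,b]\setminus X_V$ into their maximal intervals $Y_1,\dots,Y_s$ and $X_1,\dots,X_r$, one obtains that $\overline{\cB_i}=\bigcup_{\alpha\in [r], \beta\in [s]}(X_\alpha\times Y_\beta)$ is a disjoint union of axis-aligned rectangles, arranged precisely as the grid required in Definition~\ref{Definition: Gluing of rectangles}.

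The principal obstacle is the claim itself: a priori, the intersections $\cB_i\cap \cB_j$ could be small sub-rectangles that neither span the full width nor the full height of $\cB_i$, so one cannot expect to obtain the desired grid structure by considering one maximal rectangle $\cB_j$ at a time. The essential trick is to produce the saturating maximal rectangle $\cB_l$ by enlarging the $1$-dimensional strip $\cS$ along the row (or column) of $A$, thereby replacing the ill-shaped $\cB_j$ by a rectangle whose intersection with $\cB_i$ has the right orientation. Once this rectification is available, the grid decomposition of $\overline{\cB_i}$ reduces to the elementary set-theoretic observation about removing full rows and full columns from a rectangle.
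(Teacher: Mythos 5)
Your proof is correct, and it takes a genuinely different route from the paper's. The paper argues by iterated removal: it orders the maximal rectangles meeting $\cB_i$, peels off $\cB_{j_1},\cB_{j_2},\dots$ one at a time, and tracks through a case analysis on which corners of the current remainder lie in $V(\cB_{j_k})$ that each intermediate remainder $\fR_k$ stays a union of axis-aligned rectangles. Your argument avoids the iteration entirely via the saturation claim: every cell of $\cB_i\setminus\overline{\cB_i}$ lies in some $\cB_i\cap\cB_l$ spanning the full width or full height of $\cB_i$ (obtained by extending the one-cell-high strip through $A$ beyond $\cB_i$ and passing to a maximal rectangle containing it), so that $\cB_i\setminus\overline{\cB_i}$ is a union of full rows and full columns of $\cB_i$, and $\overline{\cB_i}$ is the product $\left([a,b]\setminus X_V\right)\times\left([c,d]\setminus Y_H\right)$. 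What your approach buys is significant: besides being shorter and easier to verify than the paper's case analysis, it delivers the decomposition of $\overline{\cB_i}$ as a grid $\{X_\alpha\times Y_\beta : \alpha\in[r],\ \beta\in[s]\}$, which is exactly the indexed family $\{\cR_{\alpha\beta}\}$ that Definition~\ref{Definition: Gluing of rectangles} requires in order for $\mathrm{G}(\overline{\cB_i})$ to be defined; the paper's proof only produces an unstructured union of rectangles and leaves this grid arrangement implicit. The one point worth making explicit in your write-up is that $\cB_j\not\subseteq\cB_i$ (immediate from maximality of $\cB_j$ and $\cB_j\neq\cB_i$), which is what guarantees that the $x$-range or the $y$-range of $\cB_j$ overflows that of $\cB_i$ and hence that the strip $\cS$ properly overflows $[a,b]$; you state this but a referee would want the half-line of justification.
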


\begin{proof}
   Observe that if $\cB_i \cap \cB_k = \emptyset$ for all $k \in [p] \setminus \{i\}$, then $\overline{\cB_i} = \cB_i$, and the claim follows trivially. Otherwise, let $j_1$ be the smallest index in $[p] \setminus \{i\}$ such that $\cB_i \cap \cB_{j_1} \neq \emptyset$. We distinguish two cases, depending on whether $V(\cB_{j_1})$ contains a corner of $\cB_i$.
  \begin{enumerate}
      \item Suppose that a corner of $\cB_i$ belongs to $V(\cB_{j_1})$. Note that at most one corner of $\cB_i$ can lies in the interior of $V(\cB_{j_1})$, otherwise $\cB_i$ would not be maximal, and that the four corners of $\cB_i$ cannot belong to $V(\cB_{j_1})$ at the same time, otherwise $\cB_i\subset \cB_{j_1}$ so against the maximality of $\cB_i$.  If one of the corners of $\cB_i$ lies in the interior of $\cB_{j_1}$, then there exist two distinct maximal rectangles of $\cP$, one horizontal and one vertical, say $\cB_{m_1}$ and $\cB_{n_1}$ (with $m_1,n_1 \in [p]\setminus ([{j_1}]\cup \{i\})$), containing $\cB_i \cap \cB_{j_1}$; in this case, $\cB_i \setminus (\cB_{j_1} \cup \cB_{m_1} \cup \cB_{n_1})$ consists of a single rectangle, which we denote by $\mathfrak{R}_1$. If no corner of $\cB_i$ is in the interior of $\cB_{j_1}$, then $\cB_i \setminus \cB_{j_1}$ still consists of a single rectangle, which we again denote by $\mathfrak{R}_1$. 
     \item Suppose now that no corner of $\cB_i$ is in $V(\cB_{j_1})$. Then $\cB_i \setminus \cB_{j_1}$ clearly consists of two axis-aligned rectangles, which we indicate by $R_1$ and $R_2$ and set $\mathfrak{R}_1=R_1\cup R_2$. 
  \end{enumerate}
  
    Now, if $\mathfrak{R}_1 \cap \cB_k = \emptyset$ for all $k \in [p] \setminus ([j_1] \cup \{i\})$, then $\overline{\cB_i} = \mathfrak{R}_1$. In this case, $\mathfrak{R}_1$ is either a single rectangle or a union of two axis-aligned rectangles, and the claim follows. Otherwise, let $j_2$ be the minimum index in $[p] \setminus ([j]\cup\{i\})$ such that $\fR_1 \cap \cB_{j_2} = \emptyset$. If $\mathfrak{R}_1$ is a rectangle, then we can proceed exactly as before in (1) or (2), replacing $\cB_i$ with $\mathfrak{R}_1$. This yields, in the end, a single rectangle, which we denote by $\mathfrak{R}_2$. Now, suppose that $\mathfrak{R}_1$ consists of the two axis-aligned rectangles $R_1$ and $R_2$. We have then to consider the possible situations depending on whether $V(\cB_{j_2})$ contains a corner of $R_1$ or $R_2$, or not. 
    
    \begin{itemize}
        \item[(A)] Suppose that $V(\cB_{j_2})$ contains at least one corner of $R_1$ or $R_2$. Without loss of generality, we may assume that both $R_1$ and $R_2$ are horizontally axis-aligned. We denote the lower-left, upper-right, upper-left, and lower-right corners of $R_h$ by $a_h$, $b_h$, $c_h$, and $d_h$, respectively, for $h \in {1,2}$. With this notation, $\{c_1, b_2\}$, $\{b_2, d_2\}$, $\{a_1, d_2\}$, and $\{a_1, c_1\}$ cannot lie in the interior of $\cB_{j_2}$, and the set $\{a_1, c_1, d_2, b_2\}$ cannot be contained in $V(\cB_{j_2})$, as this would contradict the maximality of $\cB_i$. If at least one of the corners $b_1,d_1, c_2,a_2$ lies in the interior of $\cB_{j_2}$ then $\fR_1\setminus \cB_{j_2}$ is either a rectangle or two aligned-axis rectangles, denoted by $\fR_2$. If one of the corners $a_1,c_1, b_2,d_2$ is in the interior of $\cB_{j_2}$, then there exist two distinct maximal rectangles of $\cP$, one horizontal and one vertical, say $\cB_{m'}$ and $\cB_{n'}$ (with $m',n' \in [p]\setminus ([j_2]\cup \{i\})$), containing $\cR\cap \cB_{j_2}$; in this case, $\fR_1 \setminus (\cB_{j_2} \cup \cB_{m'} \cup \cB_{n'})$ consists of either a single rectangle or of two axis-aligned rectangles $\mathfrak{R}_1$, which we still denote by $\fR_2$.
        \item[(B)] Suppose now that $V(\cB_{j})$ contains no corner of $R_1$ and $R_2$. Then $\fR_1 \setminus \cB_{j_2}$ clearly consists of either three or four axis-aligned rectangles. We set $\mathfrak{R}_2=\fR_1 \setminus \cB_{j_2}$. 
    \end{itemize}

    Now, if $\fR_2 \cap \cB_k = \emptyset$ for all $k \in [p] \setminus ([h]\cup\{i\})$, then $\overline{\cB_i}=\fR_2$ and either $\fR_2$ is a rectangle or consists of two, three or four axis-aligned rectangles, so the claim follows. In the opposite case, we consider the smallest index $j_2$ in $[p] \setminus ([j_2]\cup\{i\})$ such that $\fR_2 \cap \cB_{j_2} = \emptyset$. We then repeat arguments similar to those used before. In conclusion, since $[p]$ is finite, by iterating this argument, we eventually find $s \in [p]$ such that $\fR_s \cap \cB_k = \emptyset$ for all $k \in [p] \setminus \{i\}$, $\fR_s$ is a union of axis-aligned rectangles, and $\overline{\cB_i} = \fR_s$, which proves the claim.
    \end{proof}

\begin{Example}\rm \label{Example: gluing and single squares}
Consider the polyomino $\cP$ in Figure \ref{Figure: gluing and single squares} (A). The maximal intervals of $\cP$ are: $\cB_1 = \cR([(2,3),(10,7)])$, $\cB_2 = \cR([(1,4),(11,5)])$, $\cB_3 = \cR([(4,3),(6,9)])$, $\cB_4 = \cR([(7,1),(9,7)])$, $\cB_5 = \cR([(7,1),(8,9)])$ and $\cB_6 = \cR([(7,8),(9,9)])$. Observe that $\overline{\cB_1}$ consists of axis-aligned rectangles, which are $\left\{A_{11}^{(1,1)},A_{11}^{(2,1)}\right\}$, $\left\{A_{21}^{(1,1)}\right\}$, $\left\{A_{31}^{(1,1)}\right\}$, $\left\{A_{12}^{(1,1)},A_{12}^{(2,1)},A_{12}^{(2,1)},A_{12}^{(2,2)}\right\}$, $\left\{A_{22}^{(1,1)},A_{22}^{(1,2)}\right\}$ and $\left\{A_{32}^{(1,1)},A_{32}^{(1,2)}\right\}$. The gluing of $\overline{\cB_1}$ is a rectangle, illustrated in Figure \ref{Figure: gluing and single squares} (B).

  \begin{figure}[h]
		\centering
		\subfloat[]{\includegraphics[scale=0.7]{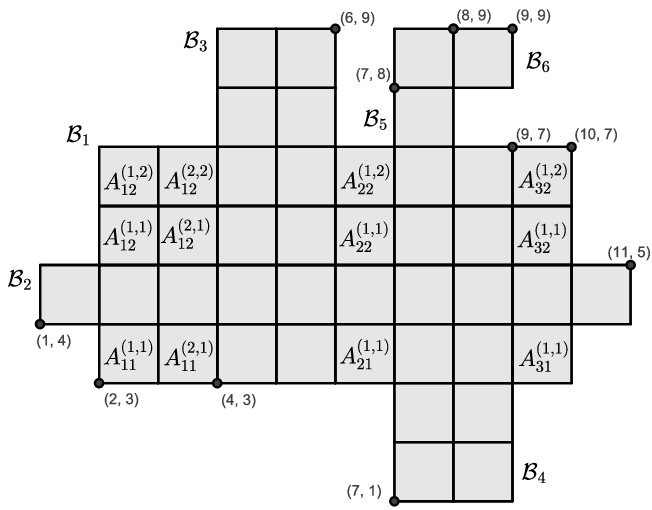}}\qquad\qquad
            \subfloat[]{\includegraphics[scale=0.7]{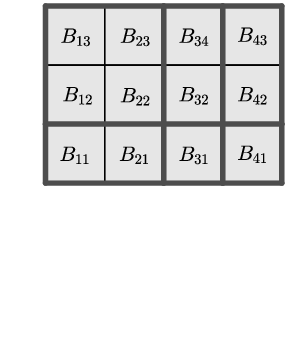}}
            \caption{A polyomino and a gluing of $\overline{\cB_1}$.}
		\label{Figure: gluing and single squares}
    \end{figure}
\end{Example}

\begin{Definition}[Domino-stable]\label{Definition: domino stable}\rm
    Let $\cP$ be a collection of cells and let $\cM(\cP) = \{\cB_1, \dots, \cB_p\}$ be the set of all maximal intervals of $\cP$. Let $\overline{\cB_i} = \cB_i \setminus \left(\bigcup_{j\in [p] \setminus \{i\}} \cB_j\right)$, for all $i\in [p]$. We say that $\cP$ is \textit{domino-stable} if, for all $i\in [p]$, one of the following conditions holds:
        \begin{enumerate}   
            \item if $\overline{\cB_i} \neq \emptyset$, then $\mathrm{G}(\overline{\cB_i})$ is a square; or
            \item if $\overline{\cB_i}=\emptyset$, then for every cell $C$ in $\cB_i$ there exist two unique distinct indices $j,k\in [p]\setminus\{i\}$ such that $\overline{\cB_j}$ (respectively, $\overline{\cB_k}$) contains a cell of $\cP$ in horizontal (respectively, vertical) position with $C$.
        \end{enumerate}
\end{Definition}

\begin{Remark}\rm\label{Remark: domino stability}
Note that in Definition \ref{Definition: domino stable}, both $\overline{\cB_j}$ and $\overline{\cB_k}$ are non-empty. Consequently, according to condition (1), $\mathrm{G}(\overline{\cB_j})$ and $\mathrm{G}(\overline{\cB_k})$ are squares. In Figures \ref{Figure: example of polyominoes with domino stable and not} (A) and (B), we have two convex polyominoes which are domino-stable. In contrast, Figures \ref{Figure: example of polyominoes with domino stable and not} (C) and (D) show two polyominoes, $\mathcal{P}_1$ and $\mathcal{P}_2$, which do not satisfy this property. The failure occurs because the cell $C$ lies in a maximal rectangle $\mathcal{B}_i$ for which $\overline{\mathcal{B}_i} = \emptyset$, and there is no vertically aligned cell with $C$ that belongs to $\overline{\mathcal{B}}$ for any maximal rectangle of $\mathcal{P}_1$ or $\mathcal{P}_2$.

 \begin{figure}[h]
		\centering
		\subfloat[]{\includegraphics[scale=0.45]{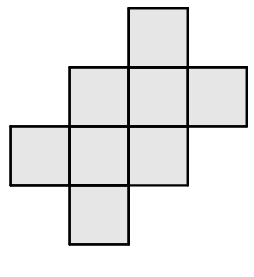}}\quad
            \subfloat[]{\includegraphics[scale=0.45]{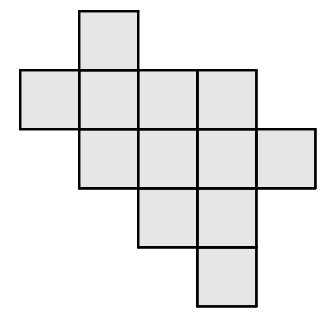}}\quad
            \subfloat[]{\includegraphics[scale=0.45]{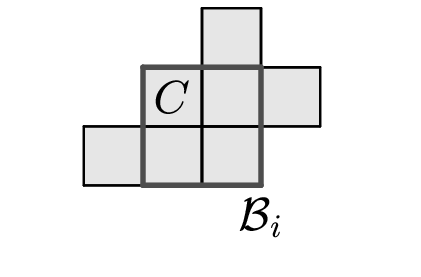}}\quad
            \subfloat[]{\includegraphics[scale=0.45]{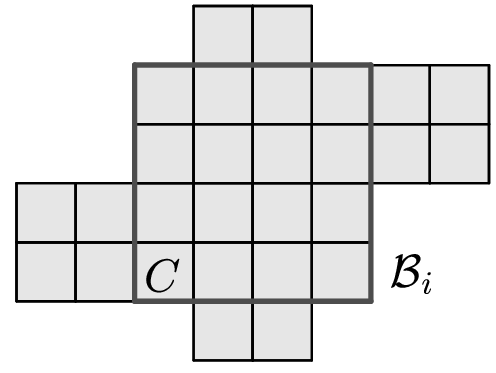}}
            \caption{Two domino-stable polyominoes on the left, two no domino-stable on the right.}
		\label{Figure: example of polyominoes with domino stable and not}
    \end{figure}    
\end{Remark}
 

 \begin{Proposition}\label{Proposition: existence of a single square}
	Let $\cP$ be a collection of cells and let $\cM(\cP) = \{\cB_1, \dots, \cB_p\}$ be the set of all maximal rectangles of $\cP$. Then there exists an index $i\in [p]$ such that $\overline{\cB_i}\neq \emptyset$. 
\end{Proposition}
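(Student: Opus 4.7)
The plan is to argue by contradiction, assuming $\overline{\cB_i}=\emptyset$ for every $i\in[p]$ (equivalently, every cell of $\cP$ belongs to at least two maximal rectangles), and to produce a cell that belongs to exactly one maximal rectangle.

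First, I would choose the extremal cell $C^*\in\cP$ whose lower-left corner $(i^*,j^*)$ has $j^*$ maximal and, among such cells, $i^*$ maximal. By this choice, no cell of $\cP$ has lower-left corner at $(i^*+1,j^*)$, $(i^*,j^*+1)$, or $(i^*+1,j^*+1)$, so every maximal rectangle of $\cP$ containing $C^*$ must have $C^*$ as its top-right corner cell (it cannot be extended upward nor to the right). If $C^*$ belongs to a unique such maximal rectangle $\cB$, then $C^*\in\overline{\cB}$ and we are done. Otherwise, let $\cB^W$ denote the widest maximal rectangle at $C^*$, with dimensions $w^W\times h^W$ and top-left corner cell $T$. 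The maximality of $w^W$ among maximal rectangles at $C^*$ forces the cell with lower-left corner $(i^*-w^W,j^*)$ to lie outside $\cP$, since otherwise the top row of $\cB^W$ could be enlarged, contradicting the choice of $\cB^W$. Hence $T$ also occupies a northwest-extremal position of $\cP$, and any maximal rectangle containing $T$ must have $T$ as its top-left corner cell.

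Next, I would show $\overline{\cB^W}\neq\emptyset$ by analyzing how other maximal rectangles can intersect $\cB^W$. Any maximal rectangle $\cB'\neq\cB^W$ with $\cB'\cap\cB^W\neq\emptyset$ must extend strictly outside $\cB^W$ (else $\cB'\subsetneq\cB^W$ would contradict maximality of $\cB'$), and a case analysis on the direction of escape, combined with the maximality of $\cB^W$ in all four directions, yields that the union $\bigcup_{\cB'\neq\cB^W}(\cB'\cap\cB^W)$ cannot cover all of $\cB^W$. The delicate subcase occurs when narrower-taller rectangles are anchored at $T$ (covering the leftmost columns of $\cB^W$) and at $C^*$ (covering the rightmost columns of $\cB^W$), jointly threatening to dominate $\cB^W$; in that situation I would iterate the construction by taking the bottom-left cell of the tallest maximal rectangle at $T$ as a new extremal cell, essentially tracing the boundary of $\cP$ counterclockwise.

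The main obstacle I expect is establishing termination of this iteration. The plan is to identify a strict monotone invariant of the sequence of extremal cells — for instance, the area of the rectangle spanned by the current and the previous extremal cell — that decreases at every step, so that by finiteness of $\cP$ the procedure halts after finitely many iterations. At termination one obtains a cell lying in exactly one maximal rectangle, contradicting the standing assumption and completing the proof.
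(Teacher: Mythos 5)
Your opening moves are sound and essentially reproduce the paper's setup (reflected top-for-bottom): the extremal cell $C^*$ forces every maximal rectangle through it to have $C^*$ as its top-right corner cell, and the widest such rectangle $\cB^W$ is precisely the unique maximal rectangle containing the maximal row $H$ through $C^*$ (the paper instead takes a maximal row along the bottom of the minimal bounding rectangle). The genuine gap is in the second half. The assertion that ``a case analysis on the direction of escape \dots yields that $\bigcup_{\cB'\neq\cB^W}(\cB'\cap\cB^W)$ cannot cover all of $\cB^W$'' is exactly the statement $\overline{\cB^W}\neq\emptyset$ to be proved, and you do not carry out that analysis; you explicitly acknowledge a ``delicate subcase'' that you do not resolve, and instead launch an iteration whose termination you admit you cannot yet establish (the proposed invariant, the area spanned by consecutive extremal cells, is only a guess and is never shown to decrease). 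A plan with an acknowledged unresolved obstacle at its core is not a proof, and as written the argument does not close.

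The fix is short and makes the iteration unnecessary: by maximality of $\cB^W$, not every full-height column of $\cB^W$ can be extended downward inside $\cP$ (otherwise $\cB^W$ would grow by a row), so pick such a column and let $C$ be its top cell, which lies in $H$. The maximal row of $\cP$ through $C$ is $H$, which has exactly the width of $\cB^W$, and the maximal column of $\cP$ through $C$ is exactly the chosen column of $\cB^W$ (it cannot go up, since $C$ is in the topmost row of $\cP$, nor down, by choice). Hence any maximal rectangle containing $C$ fits inside the rectangle spanned by these two intervals, which is $\cB^W$ itself, so it equals $\cB^W$ and $C\in\overline{\cB^W}$. This is the paper's argument; your ``delicate subcase'' in fact never occurs, but that has to be proved rather than deferred.
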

    
\begin{proof}
    Let us assume that $\cP$ is a polyomino. Let $\Rc=[R_1, R_4]$ be the minimal bounding rectangle of $\Pc$, and let $R_2$ and $R_3$ be the anti-diagonal cells of $\Rc$ such that $[R_1, R_4]$ form a horizontal cell interval. Since $\Rc$ is the minimal bounding rectangle of $\Pc$, we obtain that $\Pc \cap [R_1, R_4]\neq \emptyset$. Then, consider a maximal row $[A,B]$ of $\cP$ such that $[A,B]\subseteq [R_1,R_4]$. Due to the maximality of $[A,B]$, there is exactly one maximal rectangle of $\cP$, say $\cB_i$ for some $i\in [p]$, such that $[A,B]\subseteq \cB_i$. Due to the maximality of $\cB_i$, there is a maximal vertical column in $\cB_i$, denoted by $[C,D]$, such that the cell at North of $D$ is not in $\cP$. Moreover, the adjacent cells at East of $A$, West of $B$ and South of $C$ do not belong to $\cP$. Then $C\notin \cB_j$ for all $j\in[p]\setminus \{i\}$, that is, $C\in \overline{\cB_i}$, so $\overline{\cB_i}\neq \emptyset$. Finally, if $\cP$ is a collection of cells, it is enough to consider one connected component, which is a polyomino, and the claim follows from the preceding discussion.
\end{proof}

\begin{Definition}[Stable-square]\rm\label{Defn: stable square}
Let $\cP$ be a collection of cells.  
We set
\[
\cS(\cP)=\{\mathrm{G}(\overline{\cB}): \cB\in\cM(\cP)\text{ and }\overline{\cB}\neq\emptyset\},
\]
which is non-empty by Proposition~\ref{Proposition: existence of a single square}.  
If $\cP$ is domino-stable, then every element of $\cS(\cP)$ is a square; in this case, we refer to the elements of $\cS(\cP)$ as the \emph{stable squares} of $\cP$.
\end{Definition}


Stable squares will play a crucial role in the subsequent results, so we first review a fundamental property of the switching rook polynomial of a square. 

    \begin{Discussion}\rm\label{Discussion: Square}
        Let $\cS$ be a square of size $n^2$, where $n\in \ZZ_{> 0}$. The rook number of $\cS$ is $n$, so let $\tilde{r}_{\cS}(t) = \sum_{k=0}^{n} \tilde{r}_k(\cS) t^k$ be the switching rook polynomial of $\cS$. It is well known that $K[\cS]$ is Gorenstein (\cite[pp.451-452]{Sv}) and that the $h$-polynomial of $K[\cS]$ coincides with the switching rook polynomial of $\cS$ (\cite{QRR}). Therefore, by \cite[Theorem 4.1]{S}, we have that $\tilde{r}_{\cS}(t)$ is palindromic, meaning that $\tilde{r}_k(\cS) = \tilde{r}_{n-k}(\cS)$ for all $k \in \{0, \dots, n\}$. This symmetry implies the existence of a bijective correspondence between $\tilde{\cR}_k(\cS)$ and $\tilde{\cR}_{n-k}(\cS)$, so, in what follows, we explicitly construct such a bijection. 
        We may assume that $\cS = \cR([(1,1),(n+1,n+1)])$ and we denote by $A_{ij}$ the cell of $\cS$ with lower left corner $(i,j)$, for all $i,j \in [n]$. Moreover, we refer to $A_{ii}$ for $i\in [n]$ as a \textit{diagonal cell} of $\cS$. 
        For a non-empty canonical $k$-rook configuration $\cA=\{A_{i_1 j_1},\dots,A_{i_k j_k}\}$, we set $[n]\setminus \{i_1 < \dots < i_k\}=\{i_1' < \dots < i_{n-k}'\}$ and $[n]\setminus \{j_1 < \dots < j_k\}=\{j_1' < \dots < j_{n-k}'\}$. Then, we define $\phi_k:\tilde{\cR}_k(\cS)\rightarrow\tilde{\cR}_{n-k}(\cS)$ as:
         \[
                 \phi_k([\cA]_\sim)=\begin{cases} 
                         \left[\{A_{ii}:i\in[n]\}\right]_\sim & \text{if } k=0 \text{ and }\cA=\emptyset, \\
                     \left[\{A_{i_1' j_1'},\dots,A_{i_{n-k}' j_{n-k}'}\}\right]_\sim& \text{if } 0< k\leq r(\cS) \text{ and }\cA=\{A_{i_1 j_1},\dots,A_{i_k j_k}\}.
                             \end{cases}         
         \]
         It is an easy exercise to verify that this map is well-defined and bijective. In Figure \ref{Figure: Case square, C and phi(C)}, we illustrated that, if $\cA = \{A_{12}, A_{64}, A_{76}\}$, then $\phi_3([\cA]_\sim) = [\{A_{21}, A_{33}, A_{45}, A_{57}, A_{88}\}]_\sim$. We provide an intuitive explanation of how the construction of $\phi_k([\cA]_\sim)$ proceeds, as this may help the reader grasp the underlying philosophy of the arguments throughout the remainder of the paper.
         \begin{enumerate} 
          \item Given a $k$-rook configuration in $\cS$, consider its canonical arrangement namely $\cA$. 
          \item Remove the rows and columns of $\cS$ that contain a rook of $\cA$ and glue the remaining rows and columns of $\cS$ to obtain a square $\cS'$ of size $n-k$. 
          \item Place the rooks on the diagonal cells of $\cS'$ and map these rooks back into $\cS$ by placing them in the corresponding cells under the gluing. 
         \end{enumerate}
         
    \begin{figure}[h]
		\centering
		\includegraphics[scale=0.65]{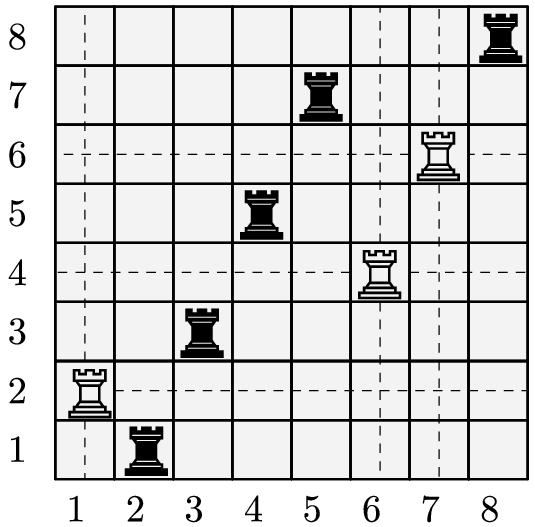}
            \caption{Rooks of $\cA$ (white) and of the representative of $\phi_3([\cA]_\sim)$ (black).}
		\label{Figure: Case square, C and phi(C)}
    \end{figure}
    
    \end{Discussion}

    We conclude this section with the following result, which will be used throughout the paper. It formulates condition (2) of domino-stability for the cells in $\cB_i \setminus \overline{\cB_i}$, when $\overline{\cB_i}$ is non-empty.

\begin{Proposition}\label{Proposition: A cell in Bi minus bar(Bi) has the condition 2}
    Let $\cP$ be a domino-stable collection of cells. Let $\cM(\cP) = \{\cB_1, \dots, \cB_p\}$ be the set of all maximal rectangles of $\cP$ and $\overline{\cB_i} = \cB_i \setminus \left(\bigcup_{j\in [p] \setminus \{i\}} \cB_j\right)$, for $i\in [p]$, with $\overline{\cB_i}\neq \emptyset$. Then for every cell $C$ of $\cB_i\setminus \overline{\cB_i}$ there exist two unique and distinct indices $j,k\in [p]$ such that $\overline{\cB_j}$ (respectively, $\overline{\cB_k}$) contains a cell of $\cP$ in horizontal (respectively, vertical) position with $C$.
\end{Proposition}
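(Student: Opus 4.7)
The plan is to proceed by case analysis on whether $C$ belongs to some maximal rectangle $\cB_m$ with $m\neq i$ satisfying $\overline{\cB_m}=\emptyset$. Since $C\notin\overline{\cB_i}$, there must exist at least one $m\neq i$ with $C\in\cB_m$, so one of the two cases below always applies.

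In the first case, suppose there exists $m\in[p]\setminus\{i\}$ with $C\in\cB_m$ and $\overline{\cB_m}=\emptyset$. I would apply condition (2) of Definition \ref{Definition: domino stable} directly to $\cB_m$ and $C$, which yields two unique distinct indices $j,k\in[p]\setminus\{m\}$ with $\overline{\cB_j}$ (respectively $\overline{\cB_k}$) containing a cell of $\cP$ horizontally (respectively vertically) aligned with $C$. Since $\overline{\cB_m}=\emptyset$, the indices $j,k$ cannot equal $m$, so uniqueness extends automatically from $[p]\setminus\{m\}$ to the whole index set $[p]$.

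In the complementary case, every maximal rectangle $\cB_m$ with $m\neq i$ and $C\in\cB_m$ satisfies $\overline{\cB_m}\neq\emptyset$, so each $\mathrm{G}(\overline{\cB_m})$ is a square. I would argue geometrically by combining the squareness of $\mathrm{G}(\overline{\cB_i})$ with Proposition \ref{Lemma: The gluing of the bar of B_i is a rectangle polyomino}: these structural data force every nonempty intersection $\cB_m\cap\cB_i$ to be a full row-strip or a full column-strip of $\cB_i$, since otherwise the residue $\overline{\cB_i}$ would contain an L-shaped region and could not be glued into a rectangle. With this picture in hand, if the row of $\cB_i$ through $C$ meets $\overline{\cB_i}$ I take $j=i$; otherwise, this row is entirely covered by a removed row-strip of $\cB_i$ coming from a maximal rectangle $\cB_m$ which extends horizontally beyond $\cB_i$, and the squareness of $\mathrm{G}(\overline{\cB_m})$ then provides a cell of $\overline{\cB_m}$ horizontally aligned with $C$ outside of $\cB_i$, giving $j=m$. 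A symmetric argument, applied to the column of $\cB_i$ through $C$, produces the index $k$. The distinctness $j\neq k$ is guaranteed because the horizontally and vertically extending rectangles through $C$ cannot coincide (otherwise $C$ would lie in $\overline{\cB_i}$), and uniqueness of both $j$ and $k$ is enforced by the maximality of the rectangles and the grid structure imposed by domino-stability.

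I anticipate the main obstacle to be rigorously justifying the structural claim that, in a domino-stable collection, every nonempty intersection $\cB_m\cap\cB_i$ must be a full row-strip or a full column-strip of $\cB_i$. This is geometrically intuitive but requires a careful case analysis to rule out partial-overlap configurations that would obstruct the gluability of $\overline{\cB_i}$ to a rectangle. Once this structural fact is established, the remaining argument reduces to a routine case analysis on the position of $C$ relative to the removed row-strips and column-strips of $\cB_i$.
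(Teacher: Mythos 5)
Your Case~A (when $C$ lies in some $\cB_m$ with $m\neq i$ and $\overline{\cB_m}=\emptyset$) is fine: condition (2) of Definition~\ref{Definition: domino stable} applies verbatim, and since $\overline{\cB_m}$ is empty it cannot itself supply the aligned cell, so uniqueness over $[p]\setminus\{m\}$ does upgrade to uniqueness over $[p]$. The problem is Case~B, and specifically the structural claim you yourself flag as the main obstacle: it is not true that in a domino-stable collection every nonempty intersection $\cB_m\cap\cB_i$ is a full row-strip or a full column-strip of $\cB_i$. A maximal rectangle $\cB_m$ can overhang a corner of $\cB_i$ (this is exactly case~(1) in the proof of Proposition~\ref{Lemma: The gluing of the bar of B_i is a rectangle polyomino}, where a corner of $\cB_i$ lies in the interior of $V(\cB_{j_1})$), so that $\cB_m\cap\cB_i$ is a corner block. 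Your proposed justification --- that otherwise $\overline{\cB_i}$ would contain an L-shaped region and could not be glued to a rectangle --- fails precisely here, because the L-shaped remainder is then absorbed by the two ``merged'' maximal rectangles (one horizontal, one vertical) containing $\cB_m\cap\cB_i$, and $\overline{\cB_i}$ can still glue to a square. So the dichotomy on which your routine case analysis rests does not hold.

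There is a second, independent gap in Case~B: even where your picture is accurate, the assertion that ``the squareness of $\mathrm{G}(\overline{\cB_m})$ provides a cell of $\overline{\cB_m}$ horizontally aligned with $C$'' is not justified. A priori $\overline{\cB_m}$ could sit entirely in rows of $\cB_m$ that miss the row of $C$; squareness of the gluing says nothing about which rows of $\cB_m$ the set $\overline{\cB_m}$ occupies. The paper's proof devotes a specific argument to exactly this point: it takes $\cB_j$ to be the (unique) maximal rectangle containing the maximal row $\cH$ of $\cP$ through $C$, proves $\overline{\cB_j}\neq\emptyset$ by deriving a contradiction from condition (2) if it were empty, and then, given an arbitrary $E\in\overline{\cB_j}$ not in $\cH$, passes to the cell $A=\cH\cap\cC$ (where $\cC$ is the column through $E$) and shows $A\in\overline{\cB_j}$ by a maximality argument. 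Uniqueness of $j$ is then a one-line disjointness observation ($\overline{\cB_{j'}}\cap\cB_j=\emptyset$ for $j'\neq j$). I would recommend abandoning the strip-decomposition picture and arguing along these lines; your Case~A can be kept but is in fact subsumed by that more direct route.
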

    
\begin{proof}
   It is sufficient to prove the claim when $\cP$ is a polyomino. Then, suppose $\cP$ is a polyomino and consider $C \in \cB_i \setminus \overline{\cB_i}$. Let $\cH$ be the maximal row of $\cP$ containing $C$, and let $\cB_j$ be the maximal rectangle in $\cM(\cP)$ containing $\cH$. We claim $\overline{\cB_j} \neq \emptyset$. Suppose, for contradiction, that $\overline{\cB_j} = \emptyset$. Then, by condition (2) of Definition \ref{Definition: domino stable}, there exists a unique index $h \in [p] \setminus \{i\}$ such that $\overline{\cB_h}$ contains a cell of $\cH$; let us denote this cell by $D$. Since $\overline{\cB_h} = \cB_h \setminus \left(\bigcup_{\alpha \in [p] \setminus \{h\}} \cB_\alpha\right)$, it follows that $D$ cannot be in $\cB_j$, contradicting the fact that $D \in \cH \subseteq \cB_j$. Hence, $\overline{\cB_j} \neq \emptyset$. We now show that there exists a cell of $\cH$ which belongs to $\overline{\cB_j}$. Since $\overline{\cB_j} \neq \emptyset$, we may consider a cell in $\overline{B_j}$, that we denote by $E$. If $E\in \cH$, then we are done. If $E\notin \cH$, then consider the column of $\cP$ which contains $E$, that we indicate by $\cC$. We denote by $A$ the unique cell in $\cH\cap \cC$ and we claim that $A\in \overline{B_j}$. In fact, $\cC$ is not contained in $\cB_\ell$, for all $\ell \neq i$, otherwise $E$ would not belong to $\overline{B_j}$; the conclusion same holds for $\cH$. Therefore  $A\in \overline{B_j}$. To prove the uniqueness of the index $j$, suppose, for the sake of contradiction, that there exists an index $j' \in [p] \setminus \{i, j\}$ such that $\overline{\cB_{j'}}$ contains a cell $A'$ of $\cP$ in horizontal position with $C$, so $A'\in \cH$. Since $\cH \subseteq \cB_j$ and $\overline{\cB_{j'}} = \cB_{j'} \setminus \left(\bigcup_{\beta \in [p] \setminus \{j'\}} \cB_\beta\right),$ then $A'$ cannot be in $\overline{\cB_{j'}}$, which is a contradiction. The existence and uniqueness of the index $k \in [p]$ such that $\overline{\cB_k}$ contains a cell of $\cP$ in vertical position with $C$ can be proven by applying similar arguments as before. This completes the proof of the proposition.
\end{proof}

\section{Domino-Stability as a Sufficient Condition for the Palindromicity of the Switching Rook Polynomial}\label{Section3: sufficient}

The aim of this section is to prove that domino-stability provides a sufficient condition for the switching rook polynomial to be palindromic. Before establishing this result, we present a preliminary lemma that describes the arrangement of a maximum number of non-attacking rooks in a domino-stable collection of cells, as well as its uniqueness up to switching operations.

\begin{Lemma}\label{Prop: unique way to place non-attacking rooks}
   Let $\cP$ be a domino-stable collection of cells. Let $\cM(\cP) = \{\cB_1, \dots, \cB_p\}$ and $\cS(\cP) = \{\cS_1, \dots, \cS_q\}$ be the sets of maximal rectangles and single squares of $\cP$, respectively, where $q \leq p$. Then there exists a unique $r(\cP)$-rook configuration of $\cP$, up to switches, and $$r(\cP)=\sum_{i=1}^q \sqrt{\mathrm{size}(\cS_i)}.$$ 
\end{Lemma}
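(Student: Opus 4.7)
The plan is to establish the lemma in three stages: (i) construct an explicit $r(\cP)$-rook configuration $F^{*}$ achieving $\sum_i n_i$ rooks, where $n_i=\sqrt{\mathrm{size}(\cS_i)}$; (ii) prove a matching upper bound via an injective ``row-assignment'' map; (iii) show that every $r(\cP)$-rook configuration is switch-equivalent to $F^{*}$.

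For the lower bound, for each stable square $\cS_i$ I would place the $n_i$ rooks on its diagonal cells $A_{11},\ldots,A_{n_i n_i}$ and pull them back to $\overline{\cB_i}\subseteq\cP$ via the inverse of the gluing of Definition~\ref{Definition: Gluing of rectangles}; call the resulting union $F^{*}$, of cardinality $\sum_{i=1}^{q}n_i$. Two rooks of $F^{*}$ in the same $\overline{\cB_i}$ come from distinct rows and columns of $\cS_i$, and since the gluing identifies cells of $\overline{\cB_i}$ sharing a $y$-coordinate (resp.\ $x$-coordinate) in $\cP$ with cells of $\cS_i$ sharing a row (resp.\ column), the two rooks have distinct $y$- and $x$-coordinates in $\cP$, hence lie in different rows and columns of $\cP$. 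Two rooks of $F^{*}$ lying in distinct $\overline{\cB_i}$ and $\overline{\cB_j}$ cannot share a row of $\cP$, because any row of $\cP$ is a horizontal $1\times k$ rectangle contained in some maximal rectangle $\cB_\ell$, and each rook being in exactly one maximal rectangle forces $\ell=i=j$, contradicting $i\neq j$; the column case is symmetric. This gives $r(\cP)\geq\sum_{i=1}^{q}n_i$.

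For the upper bound I would associate, to each rook $R\in F$ at a cell $C$, a row $\rho(R)$ of some stable square: if $C\in\overline{\cB_i}$, let $\rho(R)$ be the row of $\cS_i$ to which $C$ is sent by the gluing; otherwise $C\in\cB_i\setminus\overline{\cB_i}$ for some $i$, and Proposition~\ref{Proposition: A cell in Bi minus bar(Bi) has the condition 2} (when $\overline{\cB_i}\neq\emptyset$) or condition~(2) of Definition~\ref{Definition: domino stable} (when $\overline{\cB_i}=\emptyset$) produces a unique stable square $\cS_{j'}$ and a unique cell $C_H\in\overline{\cB_{j'}}$ horizontally aligned with $C$, and I set $\rho(R)$ equal to the row of $\cS_{j'}$ through $C_H$. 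The key sub-claim is that $\rho$ is injective: if $\rho(R_1)=\rho(R_2)$ equals row $r$ of $\cS_m$, the construction supplies cells $D_1,D_2\in\overline{\cB_m}$ (with $D_\ell=C_\ell$ when $C_\ell\in\overline{\cB_m}$) in the same row of $\cP$ as $C_1,C_2$ and both mapped to $r$ by the gluing. Hence $D_1,D_2$ share a $y$-coordinate in $\cP$; being inside the rectangle $\cB_m$ they lie in a common row of $\cB_m$, which is a connected subset of a single row of $\cP$. Transitivity places $C_1$ and $C_2$ in the same row of $\cP$, contradicting non-attacking. Since the total number of rows across the stable squares is $\sum_i n_i$, we obtain $|F|\leq\sum_i n_i$ and thus $r(\cP)=\sum_{i=1}^{q}\sqrt{\mathrm{size}(\cS_i)}$.

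For the uniqueness up to switches, fix an $r(\cP)$-rook configuration $F$; by the previous step $\rho$ and the analogous column map $\gamma$ are bijections, so every row and column of every stable square is used exactly once. I would first show by induction on the number of rooks of $F$ sitting at intersection cells that $F$ is switch-equivalent to a configuration contained in $\bigcup_i\overline{\cB_i}$: if $R\in F$ is at an intersection cell $C\in\cB_{j'}\cap\cB_{k'}$, where $\cS_{j'}\neq\cS_{k'}$ are its unique horizontal and vertical stable squares, then the column of $\cS_{j'}$ containing the image of the corresponding $C_H\in\overline{\cB_{j'}}$ is forced to be used by another rook $R'$ lying inside $\cB_{j'}$, and the pair $R,R'$ occupies opposite corners of an inner interval of $\cP$ contained in $\cB_{j'}$; the resulting switch moves $R$ into $\overline{\cB_{j'}}$ and strictly decreases the number of intersection rooks. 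Once every rook of $F$ lies in $\bigcup_i\overline{\cB_i}$, its restriction to each $\overline{\cB_i}$ is an $n_i$-rook configuration on the square $\cS_i$, and by Discussion~\ref{Discussion: Square} such a configuration is switch-equivalent inside $\cS_i$ to the diagonal. Because every inner interval of $\cB_i$ is an inner interval of $\cP$, these switches are legal in $\cP$, and we conclude $F\sim F^{*}$. I expect the principal obstacle to lie in this elimination of intersection rooks: one must justify the existence of the companion rook $R'$ in the correct position and verify that the enclosing rectangle is genuinely an inner interval of $\cP$. A natural complexity measure for the induction is the number of rooks of $F$ sitting at intersection cells, or more finely $\sum_{i\neq j}b_{ij}$, where $b_{ij}$ counts the intersection rooks horizontally aligned with $\cS_i$ and vertically aligned with $\cS_j$.
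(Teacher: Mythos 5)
Your lower bound (the construction of $F^{*}$ from the diagonals of the stable squares) is exactly the paper's configuration $\cT$, and your upper bound via the injective row-assignment map $\rho$ is a genuinely different and rather clean alternative to the paper's counting argument. The problem is the uniqueness step, and it is the gap you yourself flag. Your plan is to take a maximum configuration $F$ that has rooks at intersection cells and switch them, one at a time, into $\bigcup_i\overline{\cB_i}$, using the number of intersection rooks (or $\sum_{i\neq j}b_{ij}$) as a decreasing measure. But the switch you describe moves the pair $\{R,R'\}$ from $\{C,C'\}$ to $\{C_H,C''\}$, where $C''$ is the fourth corner of the spanned rectangle, and $C''$ can perfectly well be an intersection cell even when $C'$ was not (e.g.\ when another maximal rectangle crosses $\cB_{j'}$ in a full-height strip containing the column of $C$: then every cell of that column of $\cB_{j'}$, including $C''$, is an intersection cell). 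In that situation the count of intersection rooks is unchanged; worse, $C''$ lies in the same column of $\cP$ as $C$ and in a row of $\cP$ contained in $\cB_{j'}$, so it is assigned to the same pair $(\cS_{j'},\cS_{k'})$ as $C$ was, and your finer measure $\sum_{i\neq j}b_{ij}$ does not decrease either. So the induction is not shown to terminate, and without it you cannot conclude $F\sim F^{*}$.

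The paper closes this by proving something strictly stronger than your upper bound: if a non-attacking configuration $\cU$ uses even one cell outside $\bigcup_i\overline{\cB_i}$, then $|\cU|<r(\cP)$. (Writing $\cQ$ for the rooks of $\cU$ outside the stable squares and $r$ for the number inside, a count of the rooks of $\cT$ attacked by $\cQ$ gives $2|\cQ|\le d-r$, hence $|\cU|=r+|\cQ|\le d-|\cQ|<d$.) Consequently a maximum configuration is automatically confined to $\bigcup_i\overline{\cB_i}$, there are no intersection rooks to eliminate, and uniqueness up to switches reduces immediately to the square case of Discussion~\ref{Discussion: Square}. Your map $\rho$ only yields the non-strict inequality $|F|\le\sum_i n_i$, which is why you are forced into the problematic switching induction; to repair your proof you should upgrade the upper bound to the strict version on configurations meeting $\cB_i\setminus\overline{\cB_i}$ (for instance by showing that an intersection rook forces both a row of $\cS_{j'}$ and a column of $\cS_{k'}$ to be ``wasted'' relative to the rooks actually placed in those squares), rather than trying to switch intersection rooks away.
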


\begin{proof}
    After suitable labeling of the maximal rectangles of $\cP$, we may assume that $\overline{\cB_i} \neq \emptyset$, for all $1\leq i\leq q$. Moreover, we set $\cS_i = \mathrm{G}(\overline{\cB_i})$ and $d_i = \sqrt{\mathrm{size}(\cS_i)}$, for each $i \in [q]$. Let $\cT_i$ denote the arrangement of $d_i$ rooks, placed on the cells of $\overline{\cB_i}$, which correspond under the gluing to the diagonal cells of $\cS_i$, for all $i\in [q]$ (see Discussion \ref{Discussion: Square}). Finally, define $\cT = \cT_1 \sqcup \dots \sqcup \cT_q$ and set $d = \sum_{i=1}^q d_i = \vert \cT \vert$. We now prove that $\cT$ provides the unique way to place the maximum number of non-attacking rooks on $\cP$, up to switches.
    
    1) We firstly show that $\cT$ is a $d$-rook configuration in $\cP$. Let $i \in [q]$. Since each rook in $\cT_i$ is placed in a cell of $\overline{\cB_i}$, and by definition $\overline{\cB_i} = \cB_i \setminus \bigcup_{j \neq i} \cB_j$, it follows that all rooks in $\cT_i$ are in non-attacking positions with those in $\cT_j$ for all $j \neq i$. Moreover, by definition of $\cT_i$, every rook in $\cT_i$ is in non-attacking position with a rook in $\cT_i$. Hence, $\cT$ is a $d$-rook configuration in $\cP$.
    
    2) Next, we prove that $\cT$ is maximal. Suppose, by contradiction, that there exists a $k$-rook configuration $\cT'$ in $\cP$ with $k>d$ such that $\cT\subset \cT'$. It is not restrictive to assume that $k=d+1$, that is, $\cT'=\cT \cup \{R\}$, where $R$ is a rook not in $\cT$. Let $C_R$ be the cell of $\cP$ where $R$ is placed and $\cB_i$ be a maximal rectangle of $\cP$ containing $C_R$, for some $i \in [p]$. Since $\cP$ is domino-stable, the following holds.
    \begin{itemize}
    \item If $C_R\in \overline{\cB_i}$, then $\overline{\cB_i} \neq \emptyset$, so $\mathrm{G}(\overline{\cB_i})$ is a square by (1) of Definition \ref{Definition: domino stable} and all the columns and rows of $\overline{\cB_i}$ are occupied by a rook of $\cT$. Consequently, $R$ is in an attacking position with a rook in $\cT$, a contradiction.
    \item If $\overline{\cB_i} \neq \emptyset$ and $C_R\in \cB_i\setminus \overline{\cB_i}$, then by Proposition \ref{Proposition: A cell in Bi minus bar(Bi) has the condition 2} there exist two cells $A$ and $B$ of $\cP$ in horizontal and vertical position with $C_R$, and only two distinct indices $j,k\in [p]$ such that $A \in \overline{\cB_{j}}$ and $B \in \overline{\cB_{k}}$. Hence $R$ is necessarily in an attacking position with a rook of $\cT$, which is a contradiction
    \item On the other hand, if $\overline{\cB_i} = \emptyset$, then a similar previous contradiction arises from (2) of Definition \ref{Definition: domino stable}. 
    \end{itemize} 
    In each case, we get a contradiction; therefore, we can conclude that $\cT$ is maximal. 
    
    3) First, observe that if $\cU$ is a $d$-rook configuration in $\cP$, with all rooks placed only on cells of $\bigcup_{i \in [q]} \overline{\cB_i}$, then $\cU \sim \cT$ by Discussion \ref{Discussion: Square}. We now show that if $\cU$ is an arrangement of $k$ non-attacking rooks, not all placed in cells of $\bigcup_{i \in [q]} \overline{\cB_i}$, then $k < d$. To this end, we decompose $\cU$ into $q+1$ disjoint subsets: for each $i \in [q]$, let $\cU_i$ consist of the rooks placed on the cells of $\overline{\cB_i}$, and let $\cQ$ consist of those placed on the cells of $\cP$ that do not belong to $\bigcup_{i \in [q]} \overline{\cB_i}$. For each $i \in [q]$, let $r_i = \vert \cU_i \vert$, so that $0 \leq r_i \leq d_i$. Define $r = \sum_{i=1}^q r_i$, and note that $\sum_{i=1}^q (d_i - r_i) = d - r$. We now claim that $2\vert \cQ\vert\leq d-r$. Each rook $R$ in $\cQ$ is placed on a cell, denoted by $C_R$, of $\cB_j \setminus \overline{\cB_j}$, for some $j \in [p]$. By (2) of Definition \ref{Definition: domino stable} or Proposition \ref{Proposition: A cell in Bi minus bar(Bi) has the condition 2}, in correspondence of $C_R$, there exist two unique and distinct indices $j,k\in [q]$ such that $\overline{\cB_j}$ (respectively, $\overline{\cB_k}$) contains a cell of $\cP$ in horizontal (respectively, vertical) position with $C_R$. For all $i\in [q]$, let $\overline{\cT_i}$ the set of rooks of $\cT_i$ which are in attacking position with some rook of $\cQ$. Then $\vert \overline{\cT_i}\vert \leq d_i - r_i$. Observe that, for each rook $T$ in $\overline{\cT_i}$, $T$ attacks a unique rook in $\cQ$, which in turn is attacked by a unique rook in $\overline{\cT_j}$, for some $j\neq i$. Then $2\vert \cQ\vert=\sum_{i=1}^{q}\vert \overline{\cT_i}\vert \leq d - r$. Hence, it follows that $2 \vert \cQ \vert \leq d - r$, in particular, $\vert \cQ \vert \leq d - r - \vert \cQ \vert$. Therefore, since $\cU = \left(\bigsqcup_{i=1}^q \cU_i\right) \sqcup \cQ$ and from the inequality $\vert \cQ \vert \leq d - r - \vert \cQ \vert$, we obtain $k= r+\vert\cQ\vert\leq d-r-\vert \cQ\vert+r=d-\vert \cQ\vert<d.$ 
    
    In conclusion, $\cT$ yields the unique $r(\cP)$-rook configuration of $\cP$, up to switches, thus completing the proof of our claim, since $r(\cP) = \vert \cT \vert = \sum_{i=1}^q \sqrt{\mathrm{size}(\cS_i)}$.
    \end{proof}

We are now ready to prove that the switching rook polynomial of a domino-stable collection of cells is always palindromic. The key idea is that, starting from the $r(\cP)$-rook configuration described in the Lemma \ref{Prop: unique way to place non-attacking rooks}, any configuration of $k$ rooks can be transformed, up to switches, into a configuration of $(r(\cP)-k)$ non-attacking rooks, and vice versa. 

 \begin{Proposition}\label{Proposition: domino stable implies palindromic}
       Let $\cP$ be a collection of cells and $\tilde{r}_{\cP}(t)=\sum_{j=0}^d\tilde{r}_j(\cP)$ be the switching rook polynomial of $\cP$, where $d=r(\cP)$. If $\cP$ is domino-stable then $\tilde{r}_{\cP}(t)$ is palindromic. 
\end{Proposition}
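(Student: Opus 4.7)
My plan is to exhibit, for each $k\in\{0,1,\dots,d\}$, an explicit bijection $\Phi_k\colon\tilde{\cR}_k(\cP)\to\tilde{\cR}_{d-k}(\cP)$, thereby obtaining $\tilde{r}_k(\cP)=\tilde{r}_{d-k}(\cP)$ for every $k$. The construction generalizes the square-case map $\phi$ of Discussion~\ref{Discussion: Square} using the stable-square structure of $\cP$: writing $\cS_1,\dots,\cS_q$ for the stable squares of $\cP$ and $d_i=\sqrt{\mathrm{size}(\cS_i)}$, Lemma~\ref{Prop: unique way to place non-attacking rooks} yields $d=\sum_{i=1}^q d_i$ and identifies the unique maximum configuration $\cT$ as the rooks placed along the diagonals of the $\cS_i$. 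This provides the natural reference against which to define the complementation.

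Given a class $[\fR]_\sim\in\tilde{\cR}_k(\cP)$, I first replace $\fR$ by its canonical representative $\fF$ from Discussion~\ref{Discussion: canonical rook configuration}, using an ordering of the maximal rectangles in which the stable ones are listed first. I then split the rooks of $\fF$ into \emph{inside} rooks (those lying in some $\overline{\cB_i}$) and \emph{outside} rooks (those lying in $\cB_j\setminus\overline{\cB_j}$ for some $j$). By Proposition~\ref{Proposition: A cell in Bi minus bar(Bi) has the condition 2}, every outside rook determines a pair of distinct stable squares, one aligned horizontally and the other vertically; these alignments, together with the rows and columns occupied by inside rooks, single out in each $\cS_i$ a canonical subset of used rows and a canonical subset of used columns. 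The bijection $\Phi_k$ is then defined by complementing these subsets within each $\cS_i$ as in $\phi$, placing the canonical complementary inside rooks on the resulting cells of $\overline{\cB_i}$, and transporting the outside rooks to their dual positions determined by the same alignment data.

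Three verifications must be addressed: (a) the counting identity $|\Phi_k(\fF)|=d-k$, which follows by summing the local complement sizes $d_i-(\text{used in }\cS_i)$ over $i$ and matching the outside contributions against the identity $\sum_i d_i = d$; (b) $\Phi_k(\fF)$ is a non-attacking configuration inside $\cP$; and (c) $\Phi_k$ descends to equivalence classes and satisfies $\Phi_{d-k}\circ\Phi_k=\mathrm{id}$, which reduces to the involutivity of $\phi$ on each stable square together with the involutivity of the alignment pairing for outside rooks. Points (a) and (c) I expect to be routine bookkeeping arguments once the construction is in place. The main obstacle is (b): a naive "complement in $\cP$ of the used rows and columns" can produce row-column pairs that do not correspond to cells of $\cP$, so the outside rooks must be rerouted with care. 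Here the full strength of domino-stability enters through Proposition~\ref{Proposition: A cell in Bi minus bar(Bi) has the condition 2}, which guarantees that every outside cell needed by the image configuration is in fact horizontally and vertically aligned with stable squares in $\cP$, and hence lies in $\cP$. A careful case analysis based on the two aligned stable squares of each outside rook will then deliver both well-definedness on equivalence classes and the non-attacking property of $\Phi_k(\fF)$.
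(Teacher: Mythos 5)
Your overall strategy coincides with the paper's: take the unique maximal configuration $\cT$ supported on the diagonals of the stable squares (Lemma~\ref{Prop: unique way to place non-attacking rooks}), split a canonical representative into rooks inside the $\overline{\cB_i}$'s and rooks outside them, and complement inside each stable square via the map $\phi$ of Discussion~\ref{Discussion: Square}, with Proposition~\ref{Proposition: A cell in Bi minus bar(Bi) has the condition 2} controlling the outside rooks. However, your description of the complementation step for mixed configurations has a genuine gap. An outside rook contributes, via its two aligned stable squares, a used \emph{row} to one $\cS_j$ and a used \emph{column} to a different $\cS_k$; hence within a single stable square the number of used rows and the number of used columns need not be equal, and ``complementing these subsets within each $\cS_i$ as in $\phi$'' does not produce a square sub-board on which to place a complementary diagonal. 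The paper avoids this by complementing not row/column sets but the rook configuration $\fR_i\cup\mathrm{Atk}_{\cT_i}(\fF_j)$, where $\mathrm{Atk}_{\cT_i}(\fF_j)$ consists of the diagonal rooks of $\cT_i$ attacked by outside rooks: each such rook blocks one row \emph{and} one column, so the counts stay balanced.

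Even then, $\phi$ can only be applied when $\fR_i\cup\mathrm{Atk}_{\cT_i}(\fF_j)$ is itself a non-attacking configuration, and this fails exactly when an inside rook and an outside rook attack the \emph{same} rook of $\cT_i$ (one through its row, the other through its column). The paper isolates this as a separate sub-case and first performs explicit switches between the offending inside and outside rooks to make the relevant attack sets disjoint, then checks that the resulting map is independent of these choices. Your proposal contains no analogue of this step, and it is the one place where the argument is not ``routine bookkeeping.'' Two smaller points: the paper leaves the outside rooks fixed rather than ``transporting them to dual positions,'' which makes well-definedness on equivalence classes much easier to verify (if you move them you must justify that the image class is independent of the representative); and your anticipated main obstacle (b) is actually the least problematic part, since every rook in the image is placed either in its original cell or in a cell of some $\overline{\cB_i}\subseteq\cP$ -- the real difficulty is the non-attacking property and class-independence in the mixed case just described.
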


\begin{proof}

    To show that $\tilde{r}_{\cP}(t)$ is palindromic, that is, $\tilde{r}_j(\cP)=\tilde{r}_{d-j}(\cP)$ for all $j\in\{0,\dots,d\}$, we define a bijection $\rho_k:\tilde{\cR}_k(\cP)\rightarrow\tilde{\cR}_{d-k}(\cP)$, for all $k\in{0,\dots,d}$. Let us start by defining $\cM(\cP) = \{\cB_1, \dots, \cB_p\}$ and $\cS(\cP) = \{\cS_1, \dots, \cS_q\}$ as the sets of maximal rectangles and stable squares of $\cP$, respectively, where $q \leq p$ and $\overline{\cB_i} \neq \emptyset$ for all $i \in [q]$. Moreover, set $\cS_i = \mathrm{G}(\overline{\cB_i})$ and $d_i = \sqrt{\mathrm{size}(\cS_i)}$, for all $i\in[q]$. According to Lemma \ref{Prop: unique way to place non-attacking rooks}, there exists a unique $r(\cP)$-rook configuration $\cT$ in $\cP$, up to switches. Such a rook configuration is described in the first paragraph of the proof of Lemma \ref{Prop: unique way to place non-attacking rooks}, in particular, $\cT = \cT_1 \sqcup \dots \sqcup \cT_q$ where $\cT_i$ denotes the arrangement of $d_i$ rooks, placed on the cells of $\overline{\cB_i}$, which correspond under the gluing to the diagonal cells of $\cS_i$, for all $i\in [q]$. Moreover, we also have $d = \sum_{i=1}^q d_i = \vert \cT \vert$. 
    
    Let now $k\in \{1,\dots,d-1\}$ and $[\fR]_\sim\in\tilde{\cR}_k(\cP)$. Then we assume that $\mathfrak{R}$ is a canonical $k$-rook configuration in $\cP$ (with respect to $\cB_1, \dots, \cB_p$). To define $\rho_k([\mathfrak{R}]_\sim)$, we distinguish the following cases based on the placement of the rooks of $\mathfrak{R}$ in $\cP$.
    
    \textbf{Case 1)} We suppose that the rooks of $\fR$ are placed exclusively on cells belonging to the stable squares of $\cP$ (For an example, see Figure \ref{Figure: Map when the rooks are in the single squares} (A)). 

        \begin{figure}[h]
		\centering
		\subfloat[$\fR$]{\includegraphics[scale=0.59]{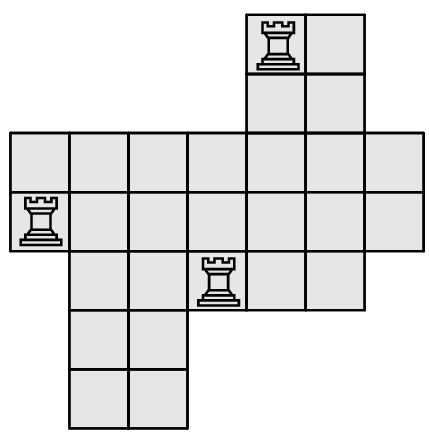}}\qquad
        \subfloat[]{\includegraphics[scale=0.59]{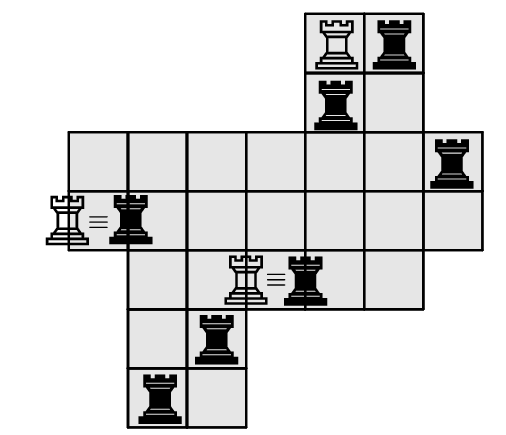}}\qquad
            \subfloat[$\fR'$]{\includegraphics[scale=0.59]{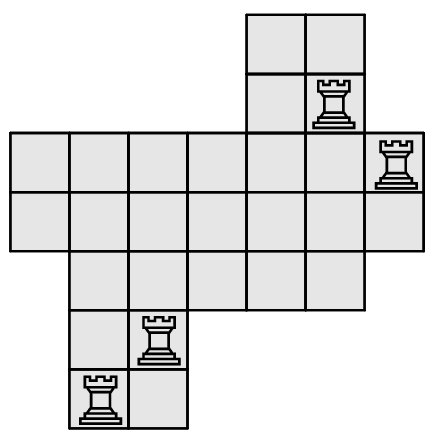}}
            \caption{Case 1)}
		\label{Figure: Map when the rooks are in the single squares}
          \end{figure}
    
        For all $i\in[q]$, let $\mathfrak{R}_i$ denote the set of rooks of $\fR$ which are placed on $\overline{\cB_i}$ and $r_i=\vert \mathfrak{R}_i\vert$, so $0\leq r_i\leq d_i$. Observe that a rook in $\mathfrak{R}_i$ either overlaps a rook of $\cT_i$ or is in attacking position with two rooks of $\cT_i$ (see Figure \ref{Figure: Map when the rooks are in the single squares} (B), where $\cT$ is in black). Next, consider $\cS_i=\mathrm{G}(\overline{\cB_i})$. Let $\mathfrak{S}_i$ be the $r_i$-rook configuration obtained by placing rooks on the cells of $\cS_i$ that correspond, under the gluing, to the cells of $\overline{\cB_i}$, where the rooks of $\fR_i$ are placed. We then apply $\phi_{r_i}$ to ${[\mathfrak{S}_i]}_\sim$ as defined in Discussion \ref{Discussion: Square}, and denote by $\mathfrak{S}_i'$ the representative of $\phi_{r_i}({[\mathfrak{S}_i]}_\sim)$. We can now map back $\mathfrak{S}_i'$ from $\cS_i$ to $\overline{\cB_i}$, that is, define $\mathfrak{R}_i'$ as the $(d_i-r_i)$-rook configuration whose rooks are placed on the cells of $\overline{\cB_i}$  which correspond, under the gluing, to the cells of $\cS_i$, where the rooks of $\mathfrak{S}_i'$ are placed. It is straightforward that $\sum_{i\in[q]}(d_i-r_i)=d-k$ and $\bigcup_{i\in [q]} \mathfrak{R}_i' \in \cR_{d-k}(\cP)$. We thus define $\rho_k([\mathfrak{R}]_\sim)=\left[\fR'\right]_\sim$, where $\fR'=\bigcup_{i\in [q]} \mathfrak{R}_i'$. See Figure \ref{Figure: Map when the rooks are in the single squares} (C) for an example of $\fR'$.
         
    \textbf{Case 2)} Suppose that the rooks of $\fR$ are placed exclusively on cells belonging to $\cB_{i} \setminus \overline{\cB_{i}}$, for $i\in[p]$ (see Figure \ref{Figure: Map when the rooks are in the NON-single squares} (A)). 

     \begin{figure}[h]
		\centering
		\subfloat[$\mathfrak{R}$]{\includegraphics[scale=0.59]{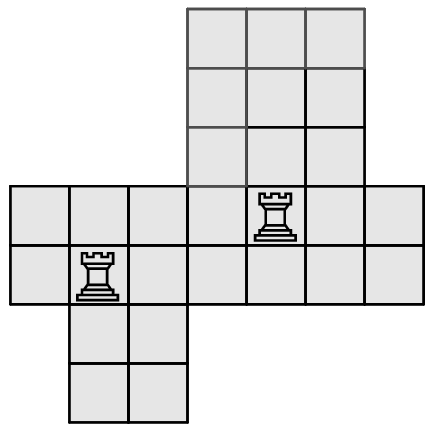}}\qquad
            \subfloat[]{\includegraphics[scale=0.59]{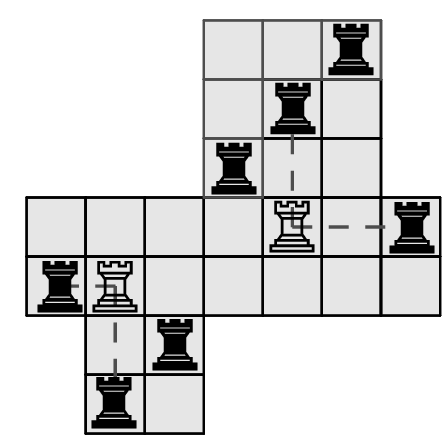}}\qquad
            \subfloat[$\mathfrak{R}'$]{\includegraphics[scale=0.59]{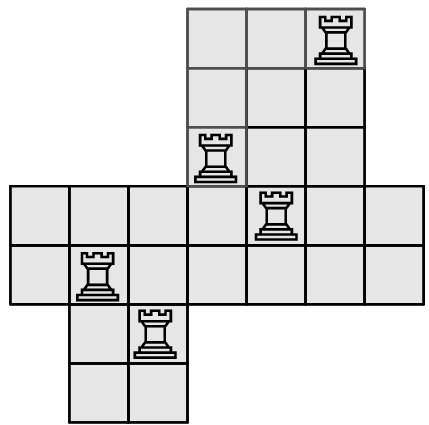}}
            \caption{Case 2)}
		\label{Figure: Map when the rooks are in the NON-single squares}
          \end{figure}  
    
        Let $\mathfrak{R}_i$ denote the configuration of rooks located on a cell of $\cB_{i} \setminus \overline{\cB_{i}}$. Note that $\mathfrak{R}_i$ may be empty if no rooks of $\mathfrak{R}$ are positioned on $\cB_i \setminus \overline{\cB_i}$ for a given $i \in [p]$. Let $I=\{i \in [p]: \vert \mathfrak{R}_i \vert \neq 0\}$. Now, for all $i \in I$, regardless of whether $\overline{\cB_i}$ is empty or non-empty, by condition (2) of Definition \ref{Definition: domino stable} or Proposition \ref{Proposition: A cell in Bi minus bar(Bi) has the condition 2}, each rook of $\mathfrak{R}_i$ is in an attacking position with only two rooks of $\cT$. Therefore, denote by $\mathrm{Atk}_{\cT}(\fR_i)$ be the set of rooks of $\cT$ that are in attacking position with those in $\mathfrak{R}_i$ (see Figure \ref{Figure: Map when the rooks are in the NON-single squares} (B), where $\cT$ is in black). Set  
        $$\fR'=\fR\cup\left(\cT\setminus \left(\bigcup_{i\in I}\mathrm{Atk}_{\cT}(\fR_i)\right)\right).$$ 
        It is straightforward to verify $\fR' \in \cR_{d-k}(\cP)$. We thus define $\rho_k([\fR]_\sim)=[\fR']_\sim$ (see Figure \ref{Figure: Map when the rooks are in the NON-single squares} (C)).

    \textbf{Case 3)} Assume that the rooks of $\fR$ are placed both on cells belonging to $\cB_{i} \setminus \overline{\cB_{i}}$, for $i\in[p]$, and on cells belonging to the stable squares of $\cP$.  Let us first introduce the following notation.

         \begin{itemize} 
         \item For all $i \in [q]$, $\mathfrak{R}_i$ denotes the set of rooks of $\fR$ on the cells of $\overline{\cB_i}$, $r_i=\vert \fR_i\vert$ and $I=\{i \in [q]:\mathfrak{R}_i \neq \emptyset\}$. 
         \item $\mathrm{Atk}_{\cT_i}(\fR_i)$ represents the set of rooks of $\cT_i$ that are in attacking position or overlap with those in $\mathfrak{R}_i$, for all $i \in [q]$. 
         \item $\mathfrak{F}_i$ denotes the set of rooks of $\fR$ which are located in the cells of $\cB_{i} \setminus \overline{\cB_{i}}$, for all $i \in [p]$, and $J=\{i \in [p]: \mathfrak{F}_i\neq \emptyset\}$. 
         \item $I_1$ is the set of indices of $i\in I$ such that a rook of $\cT_i$ is in attacking position with a rook of $\mathfrak{F}_j$ for some $j\in J$. For all $i\in I_1$ and $j\in J$, denote by $\mathrm{Atk}_{\cT_i}(\mathfrak{F}_j)$ the set of rooks of $\cT_i$ that are in attacking position with a rook of $\mathfrak{F}_j$. Moreover, for each $i\in I_1$, let $J(i)=\{j\in J: \mathrm{Atk}_{\cT_i}(\mathfrak{F}_j)\neq \emptyset\}$.
         \item $I_2:=[q]\setminus I_1$ represents the set of indices of $i\in I$ such that no rook of $\cT_i$ is in attacking position with a rook of $\mathfrak{F}_j$, for all $j\in J$. 
         \end{itemize}

       Here, we need to distinguish two sub-cases depending on whether $\mathrm{Atk}_{\cT_i}(\fR_i) \cap \mathrm{Atk}_{\cT_i}(\mathfrak{F}_j)$ is empty or not. Specifically, this distinction determines whether the switching of a rook in $\mathfrak{R}_i$ and a rook in $\mathfrak{F}_j$, where $i \in I$ and $j \in J(i)$, causes one of the two to coincide with a rook in $\cT_i$.
        
        \begin{enumerate}
            \item Suppose that $\mathrm{Atk}_{\cT_i}(\fR_i) \cap \mathrm{Atk}_{\cT_i}(\mathfrak{F}_j) = \emptyset$, for all $i \in I_1$ and $j \in J(i)$ (see Figures \ref{Figure: Map when the rooks are in the NON-single squares Case 3)-(1).} (A) and (B)). 

             \begin{figure}[h]
		\centering
		\subfloat[$\mathfrak{R}$]{\includegraphics[scale=0.59]{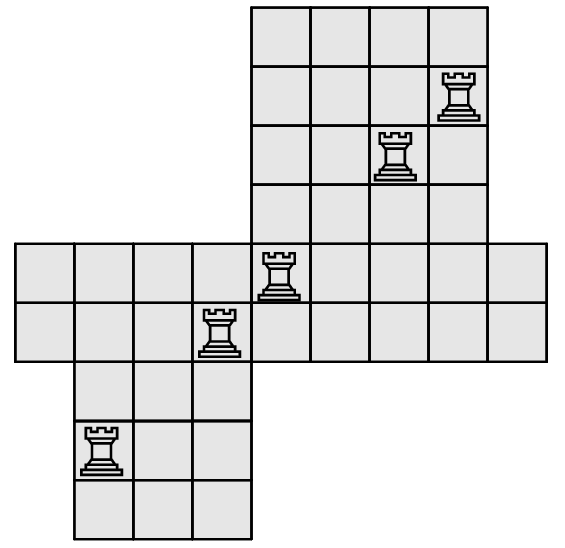}}
            \subfloat[]{\includegraphics[scale=0.59]{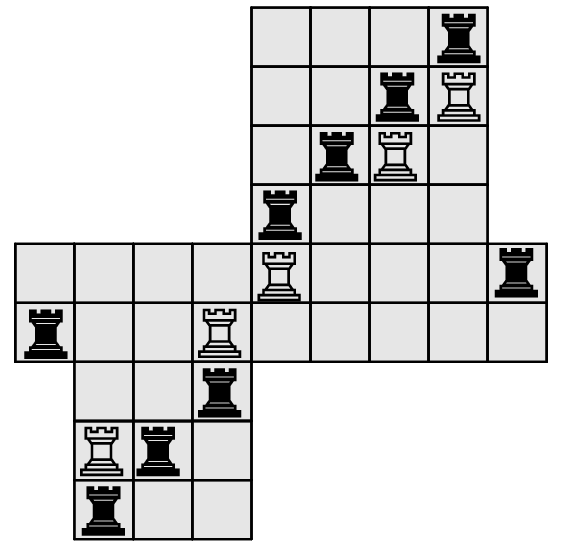}}
            \subfloat[$\mathfrak{R}'$]{\includegraphics[scale=0.59]{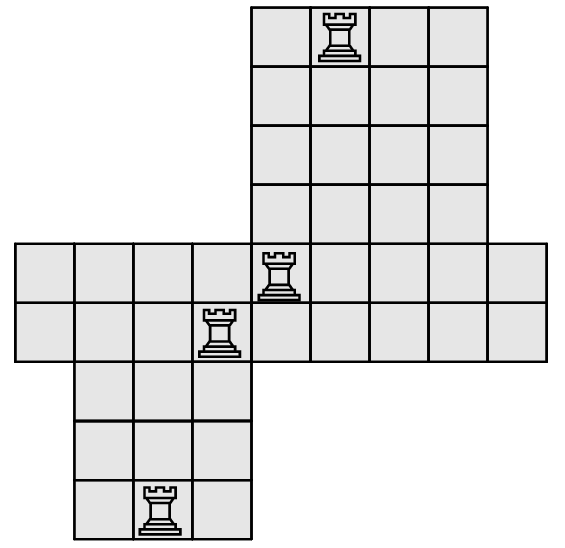}}
            \caption{Case 3)-(1)}
		\label{Figure: Map when the rooks are in the NON-single squares Case 3)-(1).}
        \end{figure}
            
        For all $i \in I_1$ and $j \in J(i)$, consider the rooks of $\mathfrak{R}_i \cup \mathrm{Atk}_{\cT_i}(\mathfrak{F}_j)$ and proceed as in Case 1), obtaining a rook configuration denoted by $\mathfrak{R}_i'$, which contains $d_i - r_i - \vert \mathrm{Atk}_{\cT_i}(\mathfrak{F}_j) \vert$ rooks. For all $i \in I_2$, we consider $\mathfrak{R}_i$ and we apply again the arguments as in Case 1), obtaining a $(d_i - r_i)$-rook configuration, namely $\mathfrak{R}_i''$. We define
            $$\fR'= \left(\bigcup_{i\in I_1} \mathfrak{R}_i'\right) \cup \left(\bigcup_{i\in I_2} \mathfrak{R}_i''\right)\cup \left(\bigcup_{j\in J} \mathfrak{F}_j\right).$$
            Observe that $\sum_{i\in I_2}(d_i-r_i)+\sum_{i\in I_1}(d_i-r_i-\vert \mathrm{Atk}_{\cT_i}(\mathfrak{F}_j)\vert)+\sum_{j\in J}\vert \mathfrak{F}_j\vert=d-k$ and $\fR'\in \cR_{d-k}(\cP)$. We thus define $\rho_k([\mathfrak{R}]_\sim)=[\fR']_\sim$ (see Figure \ref{Figure: Map when the rooks are in the NON-single squares Case 3)-(1).} (C)).

        \item Suppose that $\mathrm{Atk}(\mathfrak{R}_i, \cT_i) \cap \mathrm{Atk}(\mathfrak{F}_j, \cT_i) \neq \emptyset$, for some $i \in I_1$ and $j \in J(i)$ (see Figure \ref{Figure: Map when the rooks are in the NON-single squares Case 3)-(2).} (A)).

          \begin{figure}[h]
	\centering
	    \subfloat[$\mathfrak{R}$]{\includegraphics[scale=0.59]{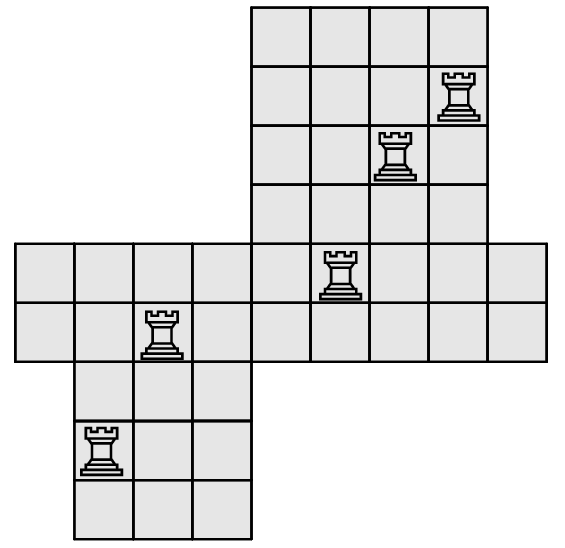}}
            \subfloat[Before switching]{\includegraphics[scale=0.59]{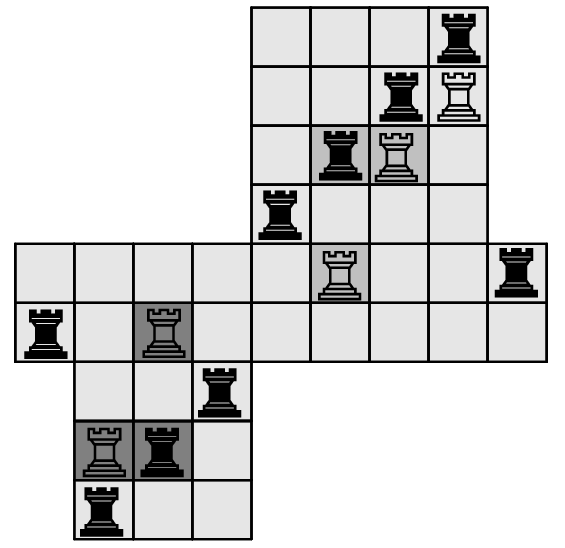}}
            \subfloat[First switching]{\includegraphics[scale=0.59]{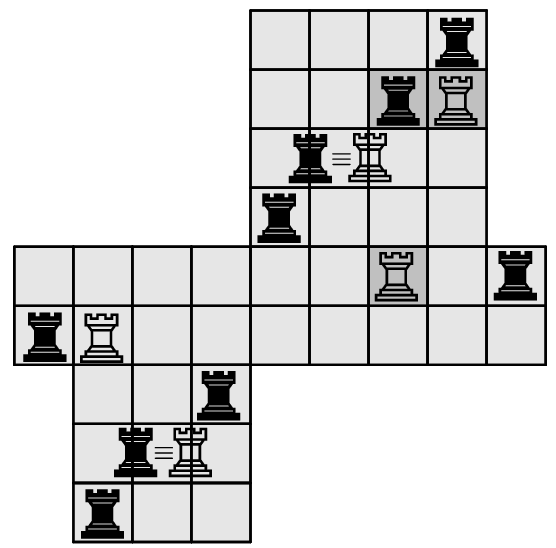}}\
            \subfloat[Second switching and $\fR^*$]{\includegraphics[scale=0.59]{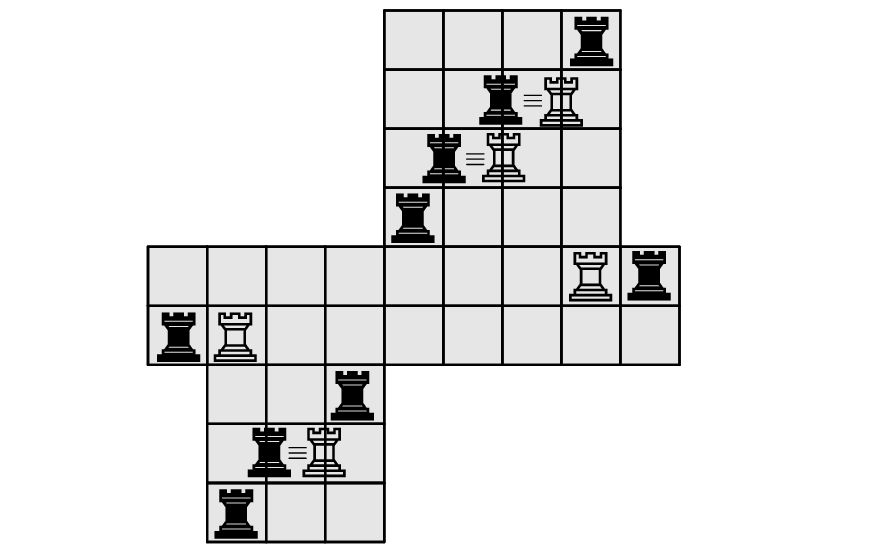}}\quad
            \subfloat[]{\includegraphics[scale=0.59]{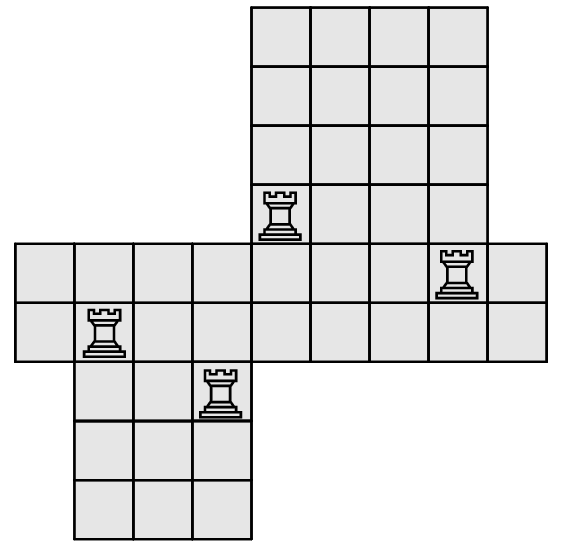}}
            \caption{Case 3)-(2)}
		\label{Figure: Map when the rooks are in the NON-single squares Case 3)-(2).}
          \end{figure}

        For each $(i, j)$ such that $\mathrm{Atk}(\mathfrak{R}_i, \cT_i) \cap \mathrm{Atk}(\mathfrak{F}_j, \cT_i) \neq \emptyset$, we can perform suitable switches between the rooks of $\mathfrak{R}_i$ and the rooks of $\mathfrak{F}_j$. These operations leads one of the rooks of $\fR_i$ or $\fF_j$  to coincide with a rook in $\cT_i$. After all the switching operations, we obtain two new arrangements of non-attacking rooks, namely $\mathfrak{R}_i^*$ and $\mathfrak{F}_j^*$, such that $\mathrm{Atk}_{\cT_i}(\fR_i^*) \cap \mathrm{Atk}_{\cT_i}(\fF_j^*) = \emptyset$ (see Figures \ref{Figure: Map when the rooks are in the NON-single squares Case 3)-(2).} (B), (C) and (D)). Define 
        $$\fR^*=\left(\bigcup_{i\in I_1}\fR_i^*\right)\cup \left(\bigcup_{i\in I_2} \mathfrak{R}_i\right)\cup \left(\bigcup_{j\in J}\fF_j^*\right).$$ 
        Then, we define $\rho_k([\mathfrak{R}]_\sim)$ as $\rho_k$, defined previously in Case 3)-(1), acting on $[\mathfrak{R}^*]_\sim$ (Figure \ref{Figure: Map when the rooks are in the NON-single squares Case 3)-(2).} (E)). 

        \end{enumerate}

        It is now easy to verify that for each $k\in\{1,\dots,d-1\}$ we have $\rho_{d-k}\circ\rho_k=\mathrm{id}_{\tilde{\cR}_k(\cP)}$ and $\rho_{k}\circ\rho_{d-k}=\mathrm{id}_{\tilde{\cR}_{d-k}(\cP)}$. Therefore, $\rho_k$ is bijective. In addition to Lemma \ref{Prop: unique way to place non-attacking rooks}, the latter means that $\tilde{r}_j(\cP)=\tilde{r}_{d-j}(\cP)$ for all $j\in\{0,\dots,d\}$. In conclusion, $\tilde{r}_\cP(t)$ is palindromic. 
\end{proof}

\section{Domino-Stability as a Necessary Condition for the Palindromicity of the Switching Rook Polynomial}\label{Section4: necessary}

    Just as the previous section established a sufficient condition for the palindromicity, we now show that domino-stability is also a necessary condition for a switching rook polynomial to be palindromic. Before presenting the main result, we state a preliminary lemma.

    \begin{Lemma}\label{Lemma: every column and row has a rook}
        Let $\cP$ be a collection of cells, $\cM(\cP) = \{\cB_1, \dots, \cB_p\}$ be the set of maximal rectangles of $\cP$ and $\tilde{r}_{\cP}(t)=\sum_{j=0}^d\tilde{r}_j(\cP)$ be the switching rook polynomial of $\cP$, where $d=r(\cP)$. Suppose that $\tilde{r}_{d}(\cP)=1$ and let $\cT$ be the unique canonical $r(\cP)$-rook configuration in $\cP$ (with respect to $\cB_1, \dots, \cB_p$). Then:
        \begin{enumerate}
            \item  every column of $\cP$ contains a cell occupied by a rook of $\cT$, and the same holds for every row of $\cP$;
            \item if $R$ is a rook placed in a cell of $\cP$, that does not contain any rook of $\cT$, then there exist two unique rooks in $\cT$ that are in attacking position with $R$.
        \end{enumerate}
       
    \end{Lemma}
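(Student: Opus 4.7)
The plan is to establish (1) by contradiction using an invariant of the switching relation, and then to deduce (2) directly from (1). The main obstacle is identifying an invariant that detects the move of a single rook across columns; the multiset of first coordinates of the lower-left corners of the cells occupied by rooks turns out to be exactly such an invariant.

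For (1), suppose, for contradiction, that some column $C$ of $\cP$ contains no rook of $\cT$ (the row case is symmetric). Pick any cell $D\in C$ and let $H_D$ denote the unique maximal row of $\cP$ through $D$. Because $|\cT|=r(\cP)$ is maximum and no rook of $\cT$ lies in $C$, the only possible obstruction to extending $\cT$ by a rook at $D$ is a rook in $H_D$; hence $H_D$ contains a (necessarily unique) rook $R_D\in\cT$. Define $\cT':=(\cT\setminus\{R_D\})\cup\{R'\}$, where $R'$ is a rook placed at $D$. This is a valid $r(\cP)$-rook configuration, since $\cT\setminus\{R_D\}$ contains no rook in $H_D$ and, by assumption, no rook in $C$. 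The key observation is that an elementary switch on rooks at the diagonal cells with lower-left corners $(i_1,j_1)$ and $(i_2,j_2)$ of an inner interval produces rooks at the anti-diagonal cells with lower-left corners $(i_1,j_2)$ and $(i_2,j_1)$, preserving both multisets $\{i_1,i_2\}$ and $\{j_1,j_2\}$; consequently, the multiset of first coordinates of the cells occupied by rooks is an invariant of $\sim$. Writing the lower-left corner of $D$ as $(x_C,y_0)$ and that of the cell of $R_D$ as $(x_0,y_0)$, we have $x_0\neq x_C$ since $R_D$ does not sit at $D$. Hence the first-coordinate multisets of $\cT$ and $\cT'$ differ by replacing one copy of $x_0$ with $x_C$, so $\cT\not\sim\cT'$, contradicting $\tilde{r}_d(\cP)=1$.

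For (2), let $R$ be a rook at a cell $D$ of $\cP$ carrying no rook of $\cT$, and let $H_D$ and $C_D$ denote the maximal row and column of $\cP$ through $D$. By (1), each of $H_D$ and $C_D$ contains a rook of $\cT$; since $\cT$ is non-attacking, these are unique, say $R_1\in H_D$ and $R_2\in C_D$. Neither $R_1$ nor $R_2$ equals $R$ (as $D$ carries no rook of $\cT$), so both are in attacking position with $R$. They are distinct, for otherwise the common rook would lie in $H_D\cap C_D=\{D\}$, contradicting the hypothesis. Finally, any rook of $\cT$ attacking $R$ must share either its row or its column of $\cP$, so it lies in $H_D$ or $C_D$, and therefore equals $R_1$ or $R_2$.
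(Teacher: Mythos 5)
Your proof is correct and follows essentially the same route as the paper's: for (1) you move the unique rook of $\cT$ lying in the row through a cell of the allegedly empty column into that column and argue that the resulting maximum configuration is not switch-equivalent to $\cT$, and for (2) you locate the attackers of $R$ in the row and column through its cell. Your explicit observation that the multiset of column coordinates of the occupied cells is a switch-invariant is a welcome addition, since it justifies the non-equivalence claim that the paper asserts without detail.
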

    
    \begin{proof}
        (1) Assume by contradiction that $C$ is a column of $\cP$ whose cells do not have any rook of $\cT$. Now, if there were a row $L$ of $\cP$ that intersects $C$ and whose cells do not have any rook of $\cT$, then we could consider $\cT \cup \{F\}$, where $F$ is a rook placed in the cell $C \cap L$; this would be a contradiction with the maximality of $\cT$. Therefore, there must be a rook $T$ in a cell of every row of $\cP$ that intersects $C$. Now, select a rook $T'$ in a row $L'$, and consider $(\cT \setminus \{T\}) \cup \{T'\}$, where $T'$ is placed in the cell $C \cap L'$. This provides a $r(\cP)$-rook configuration in $\cP$ that is not equivalent to $\cT$, contradicting the assumption $\tilde{r}_d(\cP) = 1$. A similar argument shows that every row of $\cP$ contains a cell occupied by a rook of $\cT$.
        
        (2) Suppose that there are no two rooks of $\cT$ that are in attacking position with $R$. Indeed, if no rook of $\cT$ is in an attacking position with $R$, then $\cT\cup\{R\}$ would be a set of non-attacking rooks, which contradicts the maximality of $\cT$. Furthermore, suppose that there exists no rook of $\cT$ which is in the same row (respectively, column) of $R$, but $R$ is in the same column (respectively, row) of a rook $T$ of $\cT$. In this case, replacing $T$ with $R$ yields a new configuration $\cT' = (\cT\setminus \{T\})\sqcup\{R\} \in {\cR}_d(\cP)$. Since $\cT'$ is obtained from $\cT$ by swapping a rook in the same column (respectively, row), then $\cT$ and $\cT'$ are not equivalent with respect to $\sim$, contradicting the assumption that $\tilde{r}_d(\cP) = 1$. Finally, to prove uniqueness, it is enough to observe that we cannot have three rooks of $\cT$ in the attacking position with $R$, since this would imply that $\cT$ contains two attacking rooks, which contradicts the definition of $\cT$.
    \end{proof}

     \begin{Proposition}\label{Proposition: P has NO the domino stable then r_1 less than rd-1}
        Let $\cP$ be a collection of cells and $\tilde{r}_{\cP}(t)=\sum_{j=0}^d\tilde{r}_j(\cP)t^j$ be the switching rook polynomial of $\cP$, where $d=r(\cP)$. Suppose that $\tilde{r}_d(\cP)=1$ and $\cP$ is not domino-stable. Then $\tilde{r}_1(\cP)<\tilde{r}_{d-1}(\cP).$     
    \end{Proposition}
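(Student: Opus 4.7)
Since $\tilde{r}_1(\cP)=|\cP|$ (each cell gives its own switching class when a single rook is placed), the target inequality reads $|\cP|<\tilde{r}_{d-1}(\cP)$. My plan is to construct an explicit injection $\psi$ from the cells of $\cP$ into $\tilde{\cR}_{d-1}(\cP)$ using the unique canonical $d$-rook configuration $\cT$, and then exploit the failure of domino-stability to produce one additional equivalence class outside the image of $\psi$.

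For the injection, I fix the canonical $d$-configuration $\cT$ afforded by $\tilde{r}_d(\cP)=1$ and Lemma~\ref{Lemma: every column and row has a rook}. For a cell $C$ carrying a rook $T_C\in\cT$ I set $\psi(C)=[\cT\setminus\{T_C\}]_\sim$; otherwise I let $T_1,T_2\in\cT$ be the unique pair of rooks attacking $R_C$ (Lemma~\ref{Lemma: every column and row has a rook}(2)) and set $\psi(C)=[(\cT\setminus\{T_1,T_2\})\cup\{R_C\}]_\sim$. To prove injectivity, I use that switches preserve the set of rows and the set of columns occupied by rooks, and that Lemma~\ref{Lemma: every column and row has a rook}(1) forces $\cP$ to have exactly $d$ rows and $d$ columns; consequently every class $[F]\in\tilde{\cR}_{d-1}(\cP)$ has a well-defined uncovered row $L(F)$ and uncovered column $C(F)$, both invariant under $\sim$. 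A direct computation in each of the three case combinations (both cells in $\cT$, both outside $\cT$, one of each) shows that $C\mapsto(L(\psi(C)),C(\psi(C)))$ is injective, yielding $|\mathrm{Im}(\psi)|=|\cP|$.

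The heart of the proof, and the main obstacle, is to exhibit an extra equivalence class $[F^\ast]\in\tilde{\cR}_{d-1}(\cP)\setminus\mathrm{Im}(\psi)$. By Definition~\ref{Definition: domino stable} there is a maximal rectangle $\cB_{i^*}$ failing both conditions, giving either \emph{(A)} $\overline{\cB_{i^*}}\neq\emptyset$ with $\mathrm{G}(\overline{\cB_{i^*}})$ a non-square rectangle, or \emph{(B)} $\overline{\cB_{i^*}}=\emptyset$ with some $C^*\in\cB_{i^*}$ lacking a uniquely determined pair of aligned cells in the stable regions of the other $\cB_j$'s. In both situations the plan is to locate two cells $C_1,C_2\in\cP\setminus\cT$ whose attacking pairs in $\cT$ share a common rook, so that the configuration $F^\ast$ obtained by simultaneously inserting $R_{C_1},R_{C_2}$ while deleting all three affected rooks of $\cT$ is a valid $(d-1)$-rook configuration; one then verifies that its uncovered pair $(L(F^\ast),C(F^\ast))$ is distinct from all $|\cP|$ pairs produced by $\psi$, using the structural controls from Proposition~\ref{Lemma: The gluing of the bar of B_i is a rectangle polyomino} and Proposition~\ref{Proposition: A cell in Bi minus bar(Bi) has the condition 2}. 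Combining both steps gives $\tilde{r}_{d-1}(\cP)\geq|\cP|+1>\tilde{r}_1(\cP)$, as required.
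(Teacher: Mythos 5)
Your overall architecture coincides with the paper's: the same injection $\tau$ (your $\psi$) from $\tilde{\cR}_1(\cP)$ into $\tilde{\cR}_{d-1}(\cP)$ built from the unique maximal configuration $\cT$, followed by the exhibition of one extra class. Your injectivity argument via the uncovered row/column pair is a clean way to make precise what the paper leaves as ``easy to verify'', and it is correct: by Lemma~\ref{Lemma: every column and row has a rook}(1) the collection has exactly $d$ rows and $d$ columns, switches preserve the occupied rows and columns, and the case analysis you indicate does pin down $C$ from $(L(\psi(C)),C(\psi(C)))$.

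The gap is in the second half, and it is twofold. First, the construction of the two cells $C_1,C_2$ is exactly where all the work lies, and you have only stated it as a plan. The paper must negate the two conditions of Definition~\ref{Definition: domino stable} separately: when $\mathrm{G}(\overline{\cB_i})$ is a non-square rectangle it uses the extra row of the gluing, Lemma~\ref{Lemma: every column and row has a rook}(1), and the hypothesis $\tilde r_d(\cP)=1$ (to force rooks onto the diagonal corresponding cells of $\overline{\cB_i}$ and to locate a cell $B$ outside $\cB_i$ whose row carries a rook); when $\overline{\cB_i}=\emptyset$ it first shows that the offending row is entirely contained in $\cB_i$ and then builds the chain of three rooks from the covering rectangles $\cB_t,\cB_u$. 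None of this is routine, and Propositions~\ref{Lemma: The gluing of the bar of B_i is a rectangle polyomino} and \ref{Proposition: A cell in Bi minus bar(Bi) has the condition 2} alone do not produce the required cells. Second, your proposed certification that $[F^\ast]\notin\mathrm{Im}(\psi)$ is not automatic. If the three deleted rooks are $R_1,R_2,R_3$ with new rooks at $A=(\mathrm{col}\,R_1)\cap(\mathrm{row}\,R_2)$ and $B=(\mathrm{col}\,R_2)\cap(\mathrm{row}\,R_3)$, then the uncovered pair of $F^\ast$ is $(\mathrm{row}\,R_1,\mathrm{col}\,R_3)$, and this \emph{coincides} with the uncovered pair of $\psi(C_0')$ for the fourth corner $C_0'=(\mathrm{col}\,R_1)\cap(\mathrm{row}\,R_3)$ whenever that cell lies in $\cP$. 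So the invariant you rely on rules out every class in the image except possibly one, and the construction must additionally guarantee either that $C_0'\notin\cP$ or that $F^\ast\not\sim\psi(C_0')$ (the paper arranges that $A$ and $B$ are not in switching position, which excludes the one-step equivalence). Without executing the two-case construction and closing this collision, the inequality $\tilde r_{d-1}(\cP)\ge|\cP|+1$ is not established.
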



\begin{proof}
     Let $\cM(\cP) = \{\cB_1, \dots, \cB_p\}$ be the set of maximal rectangles of $\cP$. Since $\tilde{r}_d(\cP) = 1$, there exists a unique way to place $r(\cP)$ non-attacking rooks on $\cP$, up to switches. Let $[\cT]_\sim \in \tilde{\cR}_d(\cP)$, where $\cT$ is the unique canonical $r(\cP)$-rook configuration in $\cP$ (with respect to $\cB_1, \dots, \cB_p$). Note that $\tilde{r}_1(\cP) = \vert\cP\vert$. Let us begin by easily establishing an injection $\tau$ from $\tilde{\cR}_1(\cP)$ to $\tilde{\cR}_{d-1}(\cP)$. If $R$ is a rook of $\cT$, then $\cT\setminus \{R\} \in {\cR}_{d-1}(\cP)$. If $R \notin \cT$, by Lemma \ref{Lemma: every column and row has a rook} (2), there exists a unique pair of rooks of $\cT$, namely $T_1^{(R)}$ and $T_2^{(R)}$, that are in attacking position with $R$. We now define the map $\tau: \tilde{\cR}_1(\cP) \to \tilde{\cR}_{d-1}(\cP)$ by
        \[
                \tau([\{R\}]_\sim)=\begin{cases} 
                    [\cT\setminus \{R\}]_{\sim} & \text{if } R\in \cT, \\
                    {\left[\left(\cT\setminus \left\{T_1^{(R)}, T_2^{(R)}\right\}\right)\sqcup \{R\}\right]}_\sim & \text{if } R\notin \cT.
                            \end{cases}         
        \]
        for every $[R]_\sim \in \tilde{\cR}_1(\cP)$. It is easy to verify that $\tau$ is well-defined and injective. Therefore, $\tilde{r}_1(\cP)\leq \tilde{r}_{d-1}(\cP)$. \\        
        Now, we show that there exists $[\cR]_\sim \in \tilde{\cR}_{d-1}(\cP)$ such that $[\cR]_\sim \notin \tau(\tilde{\cR}_1(\cP))$. The aim is to find three rooks $R_1$, $R_2$, and $R_3$ in $\cT$ such that the cells $A$ and $B$, defined respectively as the intersection of the column of $R_1$ with the row of $R_2$, and the intersection of the column of $R_2$ with the row of $R_3$, do not belong simultaneously to the same inner interval of $\cP$. In other words, two rooks placed in $A$ and $B$ are not in switching position, then, by removing $R_1$, $R_2$, and $R_3$ from $\cT$ and placing two new rooks in $A$ and $B$, we obtain a $(d-1)$-rook configuration whose equivalent class cannot come under $\tau$ from no class in $\tilde{\cR}_1(\cP)$. We now distinguish two cases, which involve negating conditions (1) and (2) of Definition \ref{Definition: domino stable}, respectively.
     
     \textbf{Case 1)} Suppose that (1) of Definition \ref{Definition: domino stable} is false, that is, there exists $i \in [p]$ such that $\overline{\mathcal{B}_i} \neq \emptyset$ and $\mathrm{G}(\overline{\mathcal{B}_i})$ is not a square. We may suppose that $\mathrm{G}(\overline{\mathcal{B}_i})$ is a rectangle with width $m$ and height $n$, where $m<n$. Moreover, for $j\in [m]$, we denote by $H_j'$ and $V_j'$ the bottom-most row and the leftmost column, respectively, of $\mathrm{G}(\overline{\mathcal{B}_i})\setminus(H_{j-1}'\cup V_{j-1}')$, where $H_0'=V_0'=\emptyset$; let us set $G_j=H_j'\cap V_j'$. We denote by $C_j$ the cell of $\cB_i$, corresponding to $G_j$ under the gluing (see Figure \ref{Figure: Proof - G(B) is not a rectangle} (A)) and by $H_{j}$ and $V_j$ be the column and the row of $\mathcal{P}$ that contain $C_j$, respectively. 

     \begin{figure}[h]
	\centering
            \subfloat[]{\includegraphics[scale=0.6]{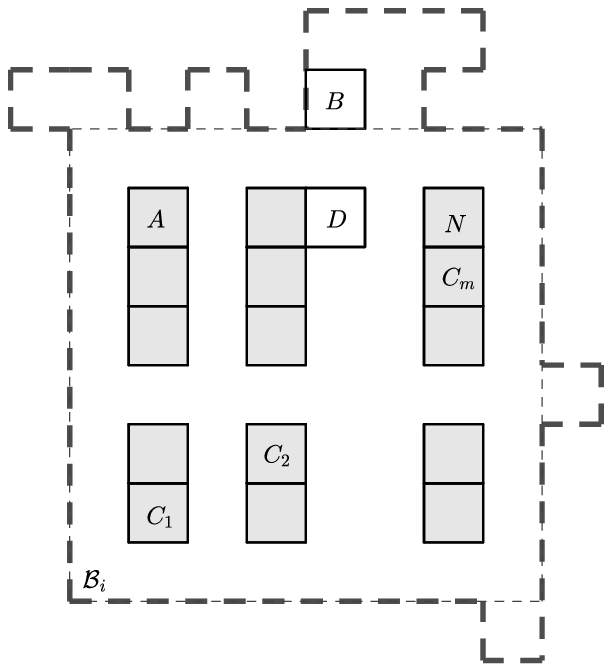}}\quad
            \subfloat[]{\includegraphics[scale=0.7]{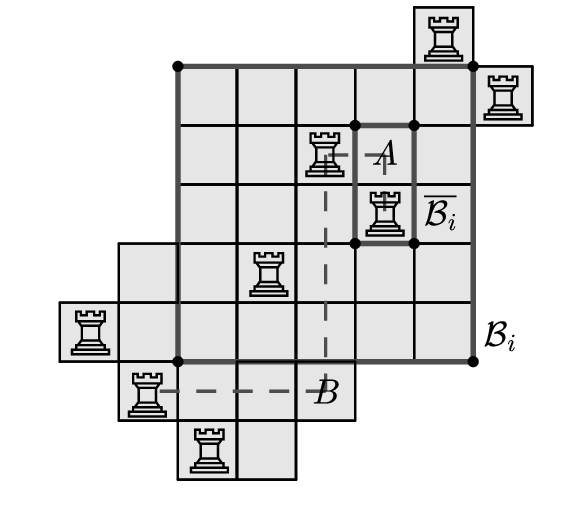}}\
            \caption{Some illustrations for the proof of Case 1 of Proposition \ref{Proposition: palindromic implies P has the domino stable}.}
		\label{Figure: Proof - G(B) is not a rectangle}
    \end{figure}

    \noindent We briefly show that there exists a $d$-rook configuration $\cT'$, equivalent to $\cT$, such that a rook of~$\mathcal{T}'$ is placed in~$C_j$, for every $j \in [m]$. Indeed, by Lemma~\ref{Lemma: every column and row has a rook}\,(1), there exists a rook of~$\mathcal{T}$ in a cell of~$H_j$ and another in a cell of~$V_j$. If a rook is already placed in~$C_j$, there is nothing to do. Otherwise, since $C_j \in \overline{\mathcal{B}_i}$, both $H_j$ and $V_j$ are entirely contained in~$\mathcal{B}_i$, so we can switch the rooks in~$H_j$ and~$V_j$. This yields a $d$-rook configuration equivalent to~$\mathcal{T}$ in which a rook is placed in~$C_j$. Repeating this procedure for every $j \in [m]$, we obtain the desired claim. Therefore, it is not restrictive to assume that a rook of~$\mathcal{T}$ is placed in~$C_j$ for every $j \in [m]$. In particular, we denote by $R_1$ the rook of $\cT$ placed in $C_1$.\\
     From this point onward, we invite the reader to keep Figure \ref{Figure: Proof - G(B) is not a rectangle} (A) in mind, as it may assist in following the argument. Let $N$ be the cell at North of $C_m$. Since $\mathrm{G}(\overline{B_i})$ is a rectangle and $m < n$, the row of $\cP$ containing $N$ is entirely contained in $\cB_i$; we denote this row by $H_N$. Furthermore, let $A$ be the cell given by $V_1 \cap H_N$. By Lemma \ref{Lemma: every column and row has a rook} (1), there exists a rook of $\cT$ placed in a cell of $H_N$; we denote this cell by $D$ and the corresponding rook by $R_2$. Moreover, since for all $j \in [m]$ the cell $C_j$ contains a rook of $\cT$, it follows that $D$ lies in $\cB_i \setminus \overline{\cB_i}$. Denote by $V_D$ the column of $\cP$, which contains $D$. Let $B$ the cell in $V_D\setminus\cB_i$ having an edge in common with $\cB_i$. We can easily see that at least one of the adjacent cells of $B$ at West or at East belongs to $\cP$; indeed, if any of the adjacent cells of $B$ at West or at East belongs to $\cP$, then $\cT'=(\cT\setminus\{R_2\})\cup\{R'\}$, where $R'$ is a rook placed in $B$, is a $d$-rook configuration in $\cP$ not equivalent to $\cT'$, which is a contradiction with $\tilde{r}_{d}(\cP)=1$. Consider the row of $\cP$ that contains $B$, denoted by $H_B$. Then, by Lemma \ref{Lemma: every column and row has a rook} (1), there exists a rook of $\cT$ in a cell of $H_B \setminus \{B\}$, which we denote by $R_3$. Remind that $R_1$ and $R_2$ are on $C_1$ and $D$, respectively. Denote by $R_A$ and $R_B$ the rooks to be placed at $A$ and $B$, respectively. Define $\cR = (\cT \setminus \{R_1, R_2, R_3\}) \cup \{R_A, R_B\}$. It is clear from the construction that $[\cR]_\sim \in \tilde{\cR}_{d-1}(\cP)$ and $[\cR]_\sim \notin \tau(\tilde{\cR}_1(\cP))$, as required. Look at Figure \ref{Figure: Proof - G(B) is not a rectangle} (B) for an example.    

  \textbf{Case 2)} Suppose, now, that (2) of Definition \ref{Definition: domino stable} is false. We may assume that there exists $i \in [p]$ such that $\overline{\cB_i} = \emptyset$ and there is a cell $C$ in $\cB_i$ for which there does not exist a unique index $j\in [p]\setminus\{i\}$ such that $\overline{\cB_j}$ contains a cell in horizontal position with $C$. If there are two indices $j_1,j_2 \in [p]\setminus\{i\}$ such that $\overline{\cB_{j_1}}$ and $\overline{\cB_{j_2}}$ contain a cell, let us say $C'$, in horizontal with $C$, then $C'\in \overline{\cB_{j_1}}=\cB_{j_1}\setminus\left( \bigcup_{k\in [p]\setminus\{j_1\}} \cB_{k} \right)$, so $C'\notin \cB_{j_2}$, which contains $\overline{\cB_{j_2}}$; therefore, $C'\notin \overline{\cB_{j_2}}$, that is a contradiction. Hence, we have that $\overline{\cB_j}$ does not have a cell in horizontal position with $C$, for all $j\in [p]\setminus\{i\}$. This guarantees that the row of $\cP$ that contains $C$, namely $H_C$, is entirely contained in $\cB_i$. By Lemma \ref{Lemma: every column and row has a rook} (1), there is a rook of $\cT$ in a cell of $H_C$;  we denote such a cell and rook by $D$ and $R_1$, respectively. We know that $\overline{\cB_i}=\emptyset$; it is not restrictive to assume that $\cB_t$ and $\cB_u$ are maximal intervals of $\cP$ (with $u,t\in[p]\setminus \{i\}$), such that $\cB_i \setminus (\cB_t \cup \cB_u) = \emptyset$, as in Figure~\ref{Figure: Proof_Gorenstein_implies_Sproperty Case2}~(A).
  
  \begin{figure}[h]
	\centering
            \subfloat[]{\includegraphics[scale=0.75]{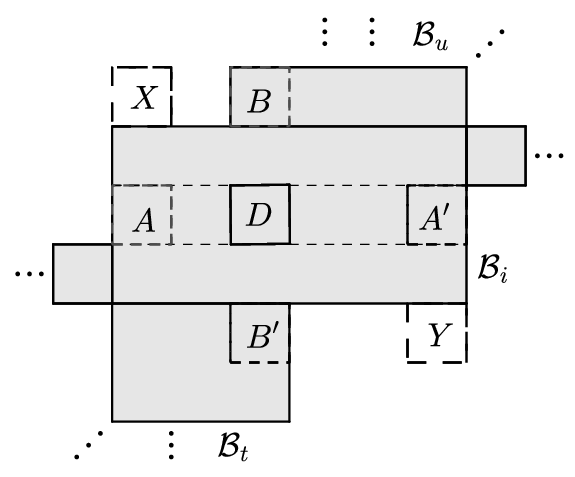}}\qquad
            \subfloat[]{\includegraphics[scale=0.75]{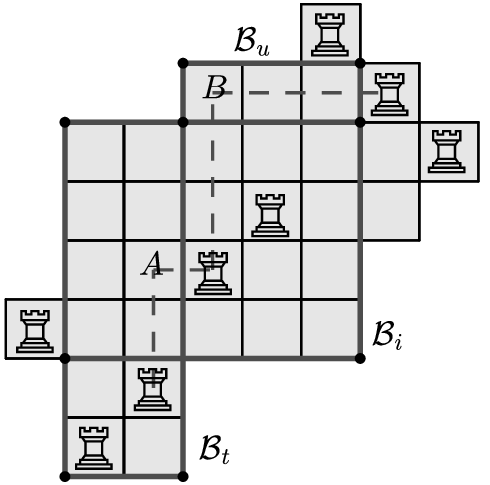}}\
            \caption{Some illustrations for the proof of Case 2 of Proposition \ref{Proposition: palindromic implies P has the domino stable}.}
		\label{Figure: Proof_Gorenstein_implies_Sproperty Case2}
          \end{figure}

\noindent  Then $D$ belongs to $\cB_u$ or $\cB_t$. Observe that the cell $X$ (respectively, $Y$) does not belong to $\cB_u$ (respectively, $\cB_t$), otherwise $\cB_i$ would not be maximal. Let $V_1$ (respectively, $V_2)$ be the left-most (respectively, right-most) column of $\cB_t$ (respectively, $\cB_u$), $V_1\cap H_C=\{A\}$ and $V_2\cap H_N=\{A'\}$. Let $V_D$ be the column of $\cP$ containing $D$. We need to distinguish two cases, depending on if $D\in \cB_u$ or $D\in \cB_t$. Assume $D\in \cB_u$. Let $B$ the cell in $V_D\setminus\cB_i$ having an edge in common with $\cB_i$, so $B\in\cP$. Let now $H_B$ be the row of $\cP$ containing $B$, so $\vert H_B\vert\geq 2$ and a rook $R_2$ of $\cT$ is in $H_B\setminus\{B\}$, otherwise $\tilde{r}_{d}(\cP)\neq 1$. By Lemma \ref{Lemma: every column and row has a rook} (1), there is a rook of $\cT$ also in $V_1\setminus \{A\}$, that we call $R_3$. Denote by $R_A$ and $R_B$ the rooks to be placed at $A$ and $B$, respectively. Define $\cR = (\cT \setminus \{R_1, R_2, R_3\}) \cup \{R_A, R_B\}$. It is clear from the construction that $[\cR]_\sim \in \tilde{\cR}_{d-1}(\cP)$ and $[\cR]_\sim \notin \tau(\tilde{\cR}_1(\cP))$, as required. See Figure~\ref{Figure: Proof_Gorenstein_implies_Sproperty Case2}~(B) for an example.\\
  In the second case, that is, $D\in \cB_u$, we can argue similarly as done before. Let $B'$ be the cell in $V_D\setminus\cB_i$ having an edge in common with $\cB_i$, $H_{B'}$ be the row of $\cP$ containing $B'$ and $R_2$ be a rook of $\cT$ in $H_B\setminus\{B\}$. We denote by $R_3$ the rook of $\cT$ in $V_2\setminus \{A'\}$. Denote by $R_{A'}$ and $R_{B'}$ the rooks to be placed at $A'$ and $B'$, respectively. Define $\cR = (\cT \setminus \{R_1, R_2, R_3\}) \cup \{R_{A'}, R_{B'}\}$. It is clear from the construction that $[\cR]_\sim \in \tilde{\cR}_{d-1}(\cP)$ and $[\cR]_\sim \notin \tau(\tilde{\cR}_1(\cP))$, as required. 

  Both cases lead to that there exists $[\cR]_\sim \in \tilde{\cR}_{d-1}(\cP)$ such that $[\cR]_\sim \notin \tau(\tilde{\cR}_1(\cP))$, that is, $\tilde{r}_1(\cP)< \tilde{r}_{d-1}(\cP)$. 
\end{proof}

     \begin{Proposition}\label{Proposition: palindromic implies P has the domino stable}
        Let $\cP$ be a collection of cells and $\tilde{r}_{\cP}(t)=\sum_{j=0}^d\tilde{r}_j(\cP)t^j$ be the switching rook polynomial of $\cP$, where $d=r(\cP)$. If $\tilde{r}_{\cP}(t)$ is palindromic, then $\cP$ is domino-stable.     
    \end{Proposition}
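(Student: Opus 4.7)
The plan is to prove the contrapositive, using Proposition~\ref{Proposition: P has NO the domino stable then r_1 less than rd-1} as the main technical input. Assume that $\cP$ is not domino-stable; I will derive that $\tilde{r}_{\cP}(t)$ fails to be palindromic.

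The first step, which is essentially free, is to observe that $\tilde{r}_0(\cP) = 1$ by convention (the empty configuration is the unique element of $\tilde{\cR}_0(\cP)$). Consequently, if $\tilde{r}_{\cP}(t) = \sum_{j=0}^d \tilde{r}_j(\cP)\, t^j$ were palindromic, comparing the coefficients of $t^0$ and $t^d$ would force
\[
\tilde{r}_d(\cP) \;=\; \tilde{r}_0(\cP) \;=\; 1.
\]
In particular, there exists (up to switches) a unique $r(\cP)$-rook configuration on $\cP$, which places us exactly in the hypothesis of Proposition~\ref{Proposition: P has NO the domino stable then r_1 less than rd-1}.

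The second step is then to invoke Proposition~\ref{Proposition: P has NO the domino stable then r_1 less than rd-1}: since $\tilde{r}_d(\cP) = 1$ and $\cP$ is not domino-stable by assumption, that proposition yields the strict inequality $\tilde{r}_1(\cP) < \tilde{r}_{d-1}(\cP)$. But palindromicity of $\tilde{r}_{\cP}(t)$ would also require $\tilde{r}_1(\cP) = \tilde{r}_{d-1}(\cP)$, which is a direct contradiction. Hence $\tilde{r}_{\cP}(t)$ cannot be palindromic when $\cP$ is not domino-stable, proving the contrapositive statement.

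There is no genuine obstacle in this argument: all the hard combinatorial work has already been absorbed into Proposition~\ref{Proposition: P has NO the domino stable then r_1 less than rd-1}, where the explicit construction of a $(d-1)$-rook configuration outside the image of the injection $\tau \colon \tilde{\cR}_1(\cP) \to \tilde{\cR}_{d-1}(\cP)$ is carried out. The only ingredients needed here are the trivial identity $\tilde{r}_0(\cP) = 1$ and the definition of palindromicity applied at the two endpoint-pairs of coefficients $(0,d)$ and $(1,d-1)$. The proof should therefore fit in a few lines.
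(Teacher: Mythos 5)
Your proposal is correct and coincides with the paper's own proof: both argue by contradiction, deduce $\tilde{r}_d(\cP)=\tilde{r}_0(\cP)=1$ from palindromicity, and then invoke Proposition~\ref{Proposition: P has NO the domino stable then r_1 less than rd-1} to obtain $\tilde{r}_1(\cP)<\tilde{r}_{d-1}(\cP)$, contradicting the required equality of the coefficients at degrees $1$ and $d-1$. Your writeup is, if anything, slightly more explicit about why $\tilde{r}_d(\cP)=1$ follows.
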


    \begin{proof}
             We proceed by contradiction, assuming that $\cP$ is not domino-stable. Since $\tilde{r}_{\cP}(t)$ is palindromic, then $\tilde{r}_d(\cP) = 1$. By Proposition \ref{Proposition: P has NO the domino stable then r_1 less than rd-1}, it must hold that $\tilde{r}_{1}(\cP) < \tilde{r}_{d-1}(\cP)$, a contradiction. Therefore, $\cP$ is domino-stable, as claimed.
    \end{proof}

\section{The Main Theorem and an Application to Gorenstein Inner $2$-Minor Ideals}\label{Section: last, main theorem}

    In this section, we present the main theorem of the paper, which provides a complete characterization of the palindromicity of switching rook polynomials in terms of the geometry of the related collections of cells. 

    \begin{Theorem}\label{Thm: main palindromicity}
Let $\cP$ be a collection of cells and $\tilde{r}_{\cP}(t)$ be the switching rook polynomial of $\cP$. Then, $\tilde{r}_{\cP}(t)$ is palindromic if and only if $\cP$ is domino-stable.
\end{Theorem}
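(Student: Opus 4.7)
The plan is to observe that this theorem is essentially an assembly of two results established in the preceding sections, namely Proposition~\ref{Proposition: domino stable implies palindromic} and Proposition~\ref{Proposition: palindromic implies P has the domino stable}, which together cover both implications. There is no new substantive argument left to make; the strategy is simply to split the biconditional and invoke each proposition in turn.

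For the forward direction, that domino-stability implies palindromicity of $\tilde{r}_{\cP}(t)$, I would directly cite Proposition~\ref{Proposition: domino stable implies palindromic}. The heart of that argument is already carried out there: using Lemma~\ref{Prop: unique way to place non-attacking rooks} to pin down the unique (up to switches) maximum configuration $\cT$ on the stable squares, and then constructing, case by case, the explicit bijection $\rho_k\colon\tilde{\cR}_k(\cP)\to\tilde{\cR}_{d-k}(\cP)$ that proves $\tilde{r}_k(\cP)=\tilde{r}_{d-k}(\cP)$ for all $k\in\{0,\dots,d\}$.

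For the reverse direction, that palindromicity forces $\cP$ to be domino-stable, I would invoke Proposition~\ref{Proposition: palindromic implies P has the domino stable}. The key logical chain is: palindromicity yields $\tilde{r}_d(\cP)=\tilde{r}_0(\cP)=1$, and simultaneously $\tilde{r}_1(\cP)=\tilde{r}_{d-1}(\cP)$; if $\cP$ were not domino-stable, Proposition~\ref{Proposition: P has NO the domino stable then r_1 less than rd-1} (together with the injection $\tau\colon\tilde{\cR}_1(\cP)\to\tilde{\cR}_{d-1}(\cP)$ constructed in its proof) would produce the strict inequality $\tilde{r}_1(\cP)<\tilde{r}_{d-1}(\cP)$, a contradiction.

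Since the real work sits in the two cited propositions, no step in the theorem itself presents an obstacle; the main theorem is a one-line bookkeeping argument. If one wanted a self-contained write-up, the genuine difficulty would be the construction of $\rho_k$ in the sufficiency proof (handling the three placement regimes together with the sub-case where $\mathrm{Atk}_{\cT_i}(\fR_i)\cap\mathrm{Atk}_{\cT_i}(\fF_j)\neq\emptyset$), and the explicit production of a class in $\tilde{\cR}_{d-1}(\cP)\setminus\tau(\tilde{\cR}_1(\cP))$ in the necessity proof, but both are already handled in the preceding sections and may be cited directly.
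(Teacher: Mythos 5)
Your proposal is correct and matches the paper's own proof, which likewise deduces the theorem in one line from Proposition~\ref{Proposition: domino stable implies palindromic} (sufficiency) and Proposition~\ref{Proposition: palindromic implies P has the domino stable} (necessity). Your summary of where the real work lies in each cited proposition is also accurate.
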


\begin{proof}
    This result follows directly from Propositions \ref{Proposition: domino stable implies palindromic} and \ref{Proposition: palindromic implies P has the domino stable}.
\end{proof}

    A natural application of Theorem \ref{Thm: main palindromicity} arises in combinatorial commutative algebra. In \cite{S}, the Gorenstein property of a $K$-algebra 
    is discussed in terms of the palindromicity of its associated $h$-polynomial. Let us briefly recall the definition of $h$-polynomial of a $K$-algebra.\\ Let $R$ be a polynomial ring over a field $K$ and $I$ be a homogeneous ideal of $R$. Then $R/I$ has a natural structure of graded $K$-algebra as $\bigoplus_{k\in\mathbb{N}}(R/I)_k$. The formal power series $\rHP_{R/I}(t)=\sum_{k\in\mathbb{N}}\dim_{K} (R/I)_kt^k$ is called the \textit{Hilbert-Poincar\'e series} of $R/I$, and by the Hilbert-Serre Theorem we can write $\rHP_{R/I}(t)=\frac{h(t)}{(1-t)^d}$, where $h(t)\in \mathbb{Z}[t]$ with $h(1)\neq0$ and $d$ is the Krull dimension of $R/I$. The polynomial $h(t)$ is called the \textit{h-polynomial} of $R/I$.\\ For our intention, the results in \cite{S} can be summarized in the following theorem.
  
  \begin{Theorem}[R. Stanley]\label{Thm:Stanley}
    Let $R$ be a $G$-algebra.
       \begin{enumerate}
        \item If $R$ is Gorenstein, then the $h$-polynomial of $R$ is palindromic (\cite[Theorem 4.1]{S}).
        \item  Suppose that $R$ is a Cohen-Macaulay domain. Then, $R$ is Gorenstein if and only if the $h$-polynomial of $R$ is palindromic (\cite[Theorem 4.4]{S}).
    \end{enumerate}
  \end{Theorem}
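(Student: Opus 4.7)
The plan is to deduce both claims from the general structure of Gorenstein rings and the behavior of the Hilbert series under reduction by a homogeneous system of parameters.

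For part (1), the key reduction is to pass from $R$ to a zero-dimensional quotient. After a harmless extension of the base field so that it is infinite, I would choose a homogeneous system of parameters $\theta_1,\dots,\theta_d$ of degree $1$; because $R$ is Cohen--Macaulay (which is implied by the Gorenstein hypothesis), this sequence is regular on $R$. Writing $R' := R/(\theta_1,\dots,\theta_d)$, one computes
$$
\rHP_{R'}(t) \;=\; (1-t)^d\,\rHP_R(t) \;=\; h(t),
$$
so the $h$-polynomial of $R$ coincides with the Hilbert series of the Artinian quotient $R'$. Since the Gorenstein property is preserved modulo a regular sequence of linear forms, $R'$ is Artinian Gorenstein. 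Consequently its socle is one-dimensional and concentrated in the top nonzero degree $s$. The induced multiplication pairing $R'_i\times R'_{s-i}\to R'_s\cong K$ is non-degenerate, which is the classical Macaulay duality for Artinian Gorenstein $K$-algebras. This yields $\dim_K R'_i=\dim_K R'_{s-i}$ for all $i$, i.e., the palindromicity of $h(t)$.

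For part (2), the ``$\Rightarrow$'' direction is already contained in (1). For ``$\Leftarrow$'', I would introduce the graded canonical module $\omega_R$, whose existence is guaranteed by Cohen--Macaulayness. Local duality gives the identity
$$
\rHP_{\omega_R}(t) \;=\; (-1)^d\,\rHP_R(1/t),
$$
and palindromicity of $h(t)$ translates this into $\rHP_{\omega_R}(t) = t^{-a}\,\rHP_R(t)$ for a unique integer $a$. Hence $\omega_R$ and $R(a)$ share the same Hilbert series. Now the domain hypothesis is used: since $R$ is a Cohen--Macaulay graded domain, $\omega_R$ is torsion-free and reflexive of rank one, so it is isomorphic to a (fractional) height-one ideal of $R$. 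Choosing a minimal generator of $\omega_R$ in the appropriate degree produces an injective graded $R$-module homomorphism $R(a)\hookrightarrow \omega_R$; comparing Hilbert series shows that the cokernel has vanishing Hilbert series and must therefore be zero. This gives $\omega_R\cong R(a)$, which by definition means $R$ is Gorenstein.

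The main obstacle is precisely the promotion, in (2), of ``equality of Hilbert series'' to an ``isomorphism'' $\omega_R\cong R(a)$. The domain hypothesis is essential at exactly this step: it is what allows the canonical module to be realized as a rank-one fractional ideal, and what produces a nonzero injection $R(a)\hookrightarrow \omega_R$ from the generator in minimal degree, which the Hilbert series comparison then forces to be surjective. Without the domain assumption one obtains only the symmetry of the Hilbert function of the Artinian reduction $R'$, which does not suffice to conclude the Gorenstein property, as the example $K[x,y,z,w]/(xw,yw,xyz)$ recalled in the introduction shows.
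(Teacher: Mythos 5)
Your argument is correct, but note that the paper does not actually prove this statement: it is quoted verbatim from Stanley, with explicit pointers to \cite[Theorems 4.1 and 4.4]{S}, and is used as a black box. What you have written is essentially a reconstruction of the classical proofs. For part (1), your route (Artinian reduction by a linear system of parameters, then nondegeneracy of the socle pairing $R'_i\times R'_{s-i}\to R'_s\cong K$) is the standard textbook argument and is sound; Stanley himself phrases Theorem 4.1 as the functional equation $F(R,1/t)=(-1)^d t^{-a}F(R,t)$ and derives it via the canonical module, but the two are equivalent for standard graded algebras, which is the only case the paper uses (its definition of the $h$-polynomial via $\rHP_{R/I}(t)=h(t)/(1-t)^d$ presupposes standard grading). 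The one caveat is that a \emph{degree-one} system of parameters requires $K$ infinite and $R$ generated in degree $1$; a general $G$-algebra in Stanley's sense need not be standard graded, so strictly speaking your reduction proves the theorem only in the standard graded setting --- harmless here, but worth flagging. For part (2), your argument (equality of Hilbert series of $\omega_R$ and $R(a)$ from local duality plus palindromicity, then torsion-freeness of $\omega_R$ over the domain $R$ yielding an injection $R(a)\hookrightarrow\omega_R$ from a generator in initial degree, forced to be an isomorphism by the Hilbert series count) is precisely Stanley's proof of Theorem 4.4, and your closing remark correctly identifies where the domain hypothesis is indispensable, consistent with the counterexample $K[x,y,z,w]/(xw,yw,xyz)$ recalled in the introduction.
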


  The switching rook polynomial of a collection of cells $\cP$ is expected to provide a combinatorial description of the $h$-polynomial of the coordinate ring of the so-called \textit{inner $2$-minor ideal} of $\cP$ (\cite[Conjecture 2.4]{NQR}). This binomial ideal was introduced for the first time in \cite{Q}. We now recall the construction. \\
  Let $\cP$ be a collection of cells, and let $S_\cP=K[x_v \mid v\in V(\cP)]$ be the polynomial ring associated with $\cP$, where $K$ is a field. A proper interval $[a,b]$ is called an \textit{inner interval} of $\cP$ if all cells in $\cR([a,b])$ belong to $\cP$. If $[a,b]$ is an inner interval of $\cP$, with diagonal corners $a$ and $b$ and anti-diagonal corners $c$ and $d$, then the binomial $x_ax_b-x_cx_d$ is called an \textit{inner $2$-minor} of $\cP$. The ideal $I_{\cP}$ of $S_\cP$ generated by all inner $2$-minors of $\cP$ is called the \textit{ideal of $2$-minors} of $\cP$. The quotient ring $K[\cP] = S_\cP/I_{\cP}$ is called the \textit{coordinate ring} of $\cP$.
  
  As an application of Theorem \ref{Thm: main palindromicity}, combined with Theorem \ref{Thm:Stanley}, we obtain the following immediate result.

    \begin{Corollary}\label{Coro: Gorenstein}
    Let $\cP$ be a collection of cells such that $h_{K[\cP]}(t)=\tilde{r}_\cP(t)$.
    \begin{enumerate}
        \item If $K[\cP]$ is Gorenstein, then $\cP$ is domino-stable;
        \item Suppose that $K[\cP]$ is a Cohen-Macaulay domain, then the converse of (1) is true: that is, if $\cP$ is domino-stable, then $K[\cP]$ is Gorenstein.
    \end{enumerate}
    \end{Corollary}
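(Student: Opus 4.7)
The plan is to observe that this is truly an immediate corollary that simply chains together two results: the main Theorem \ref{Thm: main palindromicity} of the paper and Stanley's classical Theorem \ref{Thm:Stanley}. The hypothesis $h_{K[\cP]}(t) = \tilde{r}_\cP(t)$ is precisely the bridge that allows us to transport the combinatorial palindromicity criterion we just proved into an algebraic statement about $K[\cP]$.

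For part (1), I would start from the assumption that $K[\cP]$ is Gorenstein. By Theorem \ref{Thm:Stanley}(1), Stanley's theorem applied to the standard graded $K$-algebra $K[\cP]$ yields that $h_{K[\cP]}(t)$ is palindromic. Using the identification $h_{K[\cP]}(t) = \tilde{r}_\cP(t)$, we obtain that $\tilde{r}_\cP(t)$ is palindromic, and then Theorem \ref{Thm: main palindromicity} (the necessity direction, contributed by Proposition \ref{Proposition: palindromic implies P has the domino stable}) forces $\cP$ to be domino-stable.

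For part (2), the argument runs in the opposite direction. Assuming $\cP$ is domino-stable, the sufficiency direction of Theorem \ref{Thm: main palindromicity} (from Proposition \ref{Proposition: domino stable implies palindromic}) gives that $\tilde{r}_\cP(t)$ is palindromic; invoking again the hypothesis $h_{K[\cP]}(t) = \tilde{r}_\cP(t)$, the $h$-polynomial of $K[\cP]$ is palindromic. Because we are additionally assuming $K[\cP]$ is a Cohen-Macaulay domain, the converse statement in Theorem \ref{Thm:Stanley}(2) is available and concludes that $K[\cP]$ is Gorenstein.

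There is no real obstacle here, as all the serious work has already been carried out either in earlier sections of this paper or in Stanley's paper; the only point demanding minor care is to record explicitly that $K[\cP]$ is a standard graded $K$-algebra (which it is by construction, as $I_\cP$ is generated by homogeneous binomials of degree $2$ in $S_\cP$), so that Theorem \ref{Thm:Stanley} applies without modification.
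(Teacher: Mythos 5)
Your proposal is correct and matches the paper exactly: the paper presents this corollary as an immediate consequence of Theorem \ref{Thm: main palindromicity} combined with Theorem \ref{Thm:Stanley}, using the hypothesis $h_{K[\cP]}(t)=\tilde{r}_\cP(t)$ as the bridge, precisely as you do. The paper does not even write out the chain of implications, so your version is, if anything, slightly more explicit.
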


    To highlight the scope of Corollary \ref{Coro: Gorenstein}, we present the following example.

   \begin{Example}\rm
In \cite{QSS} it was proven that if $\cP$ is a simple polyomino (that is, a polyomino without holes), then $K[\cP]$ is a normal Cohen–Macaulay domain. In particular, for a parallelogram polyomino $\cP$ (a special class of simple polyominoes that can be viewed as a planar distributive lattice when regarded as a poset) \cite[Theorem 3.5]{QRR} shows that $h_{K[\cP]}(t)=\tilde{r}_\cP(t)$. Hence, Corollary \ref{Coro: Gorenstein} encompasses \cite[Theorem 4.10]{QRR}. From this perspective, domino stability extends the $S$-property introduced in \cite[Definition 4.1]{QRR}. For example, the polyomino in Figure \ref{Figure: example of polyominoes with domino stable and not} (A) is domino-stable (and thus its coordinate ring is Gorenstein), but it does not satisfy the $S$-property. Moreover, parallelogram polyominoes can also be regarded as skew diagrams; in this setting, Theorem \ref{Thm: main palindromicity} recovers \cite[Corollary 5.16]{AJ}. \\ Furthermore, in \cite{JN} and \cite{NQR} the question of Gorensteinness was not initially addressed due to its difficulty. Corollary \ref{Coro: Gorenstein} now provides such a description: for convex collections of cells considered in \cite{NQR} via \cite[Corollary 4.2]{NQR}, and for frame polyominoes studied in \cite{JN} via \cite[Proposition 3.2 and Theorem 4.7]{JN}. In Figures \ref{Figure: example of polyominoes with domino stable and not} (A) and (B) we display two polyominoes whose coordinate rings are Gorenstein, while Figure \ref{fig:placeholder} shows an additional example of a frame polyomino with the same property.

\begin{figure}[h]
    \centering
    \includegraphics[scale=0.4]{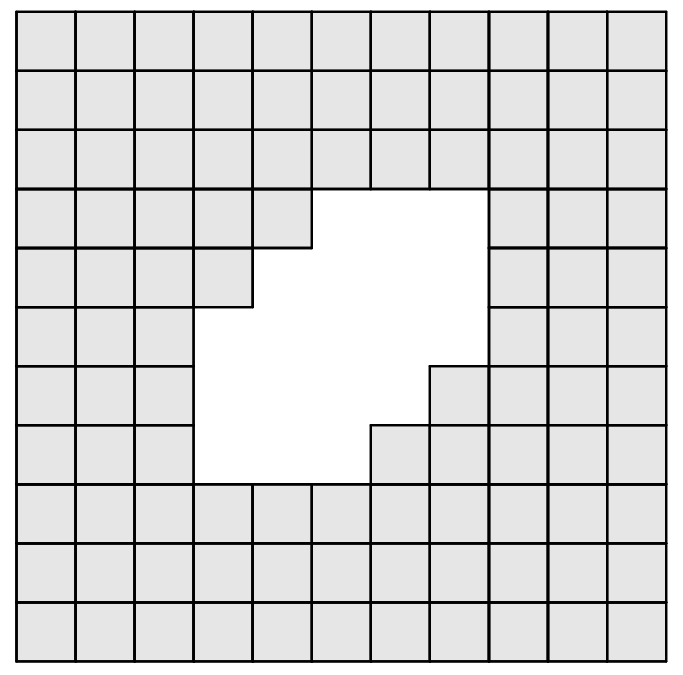}
    \caption{A frame polyomino with Gorenstein coordinate ring.}
    \label{fig:placeholder}
\end{figure}
\end{Example}


    The previous corollary motivates us to search for additional geometric conditions on the shape of domino-stable collections of cells that would yield a sufficient criterion for the Gorenstein property. However, the computational evidence suggests a stronger phenomenon: domino-stability alone already appears to be sufficient for the Gorenstein property. By using Theorem~\ref{Thm: main palindromicity}, we can computationally establish the following result.

    \begin{Proposition}\label{Prop: computational evidence}
        Let $\cP$ be a domino-stable collection of cells with rank less than or equal to $10$ or a domino-stable polyomino with rank less than or equal to $12$. Then $K[\cP]$ is Gorenstein.
    \end{Proposition}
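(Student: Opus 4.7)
The plan is purely computational: enumerate all domino-stable configurations within the stated rank bounds, and for each, verify the Gorenstein property of $K[\cP]$ using the framework developed in \cite{N2}.

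First, I would exhaustively enumerate, up to the dihedral symmetries of the square lattice, all collections of cells with $\vert \cP\vert \leq 10$ and all polyominoes with $\vert \cP\vert \leq 12$. This can be carried out by a standard incremental growth procedure: starting from a single cell, one adds one cell at a time (respecting edge-adjacency in the polyomino case and weak connectivity in the general case), keeps a single canonical representative per symmetry orbit, and prunes duplicates. The total number of shapes is finite and well within reach for the stated bounds, and can be cross-checked against the known enumeration counts of free polyominoes.

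Next, for each enumerated $\cP$ I would test domino-stability. By Theorem~\ref{Thm: main palindromicity}, it suffices to compute the switching rook polynomial $\tilde{r}_\cP(t)$ via the routine of \cite{N1} and check palindromicity. As an independent consistency check on the implementation, one can also verify Definition~\ref{Definition: domino stable} directly by computing $\cM(\cP)$, extracting $\overline{\cB_i}$ for each $i\in[p]$, gluing the components as in Definition~\ref{Definition: Gluing of rectangles}, and checking the square condition (1) together with the alignment condition (2). Both tests should agree, restricting attention to the finite sublist of domino-stable shapes.

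Finally, for each domino-stable $\cP$ in that sublist, I would form the polynomial ring $S_\cP$, generate the inner $2$-minor ideal $I_\cP$ from the inner intervals of $\cP$, and invoke a Gorenstein test in a computer algebra system. The main obstacle is computational cost: a direct test via minimal free resolutions, or via computation of $\Ext^{\height I_\cP}_{S_\cP}(K[\cP],S_\cP)$, grows rapidly with the number of vertices of $\cP$, and a naive implementation becomes infeasible near the upper end of the ranges considered. The framework of \cite{N2} mitigates this by exploiting the specific structure of inner $2$-minor ideals of collections of cells to compute the $h$-polynomial and the canonical module efficiently, which keeps the verification tractable. Running the pipeline confirms that $K[\cP]$ is Gorenstein for every domino-stable $\cP$ within the prescribed ranges, yielding the proposition.
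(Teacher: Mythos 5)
Your plan is, in outline, the same as the paper's: an exhaustive computational verification, with Theorem~\ref{Thm: main palindromicity} used to detect domino-stability via palindromicity. The one substantive difference is that you propose to run a Gorenstein test on \emph{every} domino-stable $\cP$ in range, whereas the paper first invokes Corollary~\ref{Coro: Gorenstein}(2) to dispose of all instances where $K[\cP]$ is a (Cohen--Macaulay) domain purely theoretically, and then runs the expensive algebraic Gorenstein check (\texttt{isGorenstein} on $S_\cP/I_\cP$) only on the non-prime instances, detected with \texttt{binomialIsPrime} from \cite{Binomials}. This reduction is not cosmetic: by the paper's tables, at rank $12$ there are $63600$ polyominoes but only $388$ with non-domain coordinate ring, and at rank $10$ there are $758381$ weakly connected collections but only $61834$ non-domain ones, so the prime/non-prime split is what makes the computation terminate at the upper end of the stated ranges. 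Your appeal to ``the framework of \cite{N2}'' to keep the direct approach tractable is therefore doing unexamined work --- \cite{N2} is precisely the implementation of the paper's pipeline, including this reduction --- and without it the feasibility claim at ranks $10$--$12$ is not justified. A second, minor difference: you filter for domino-stability by computing the switching rook polynomial combinatorially via \cite{N1} (or by checking Definition~\ref{Definition: domino stable} directly), while the paper filters by palindromicity of the algebraically computed $h$-polynomial, justified by \cite[Theorem~2.3]{NQR} together with Theorem~\ref{Thm: main palindromicity}; both are legitimate, and your suggestion to run both as a cross-check is reasonable.
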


    \begin{proof}
    Observe that, if $\cP$ is a domino-stable collection of cells and $K[\cP]$ is a Cohen–Macaulay domain, then by (2) of Corollary \ref{Coro: Gorenstein} it follows that $K[\cP]$ is also Gorenstein. Then, it is enough to restrict ourselves to the case where $I_{\cP}$ is not prime. We implemented scripts in \texttt{SageMath} \cite{sage} and \texttt{Macaulay2} \cite{M2}, which are available in \cite{N2}, and here described. 

    (A) Compute the set $L$ of all collections of cells of rank $n$ (using \texttt{SageMath}). The resulting datasets, that is, plain-text files listing all tested collections of cells and polyominoes, are included in \cite[\texttt{sage}]{N2}.

    (B) We need to verify that, if $\cP$ is a domino-stable collection of cells and $K[\cP]$ is not a domain, then $K[\cP]$ is Gorenstein. For this purpose, we provide a concise \texttt{Macaulay2} algorithm (see the file \texttt{TestGorenstein.m2} in \cite{N2}), which implements the following steps:
    
    \begin{enumerate}
        \item for a given collection of cells $\cP$, it computes the inner $2$-minor ideal of $\cP$;
        \item it discards the non-prime instances by using the function \texttt{binomialIsPrime}, provided in the package \texttt{Binomials} (\cite{Binomials});
        \item it retains only those with a palindromic $h$-polynomial, using a short preliminary auxiliary function \texttt{isPalindromic};
        \item it verifies the Gorenstein property of $K[\cP]$, by using \texttt{isGorenstein} provided in the package \texttt{TorAlgebra} (\cite{TorAlgebra}).
        \end{enumerate}

     \noindent Indeed, filtering out domino-stable collections of cells from the lists of all collections of cells (or polyominoes) of the given ranks reduces to checking the palindromicity of the $h$-polynomial, by \cite[Theorem~2.3]{NQR} and Theorem~\ref{Thm: main palindromicity}.  
    

  
    


 (C) For $n \leq 8$ in the case of collections of cells and for $n \leq 11$ in the case of polyominoes, the computations were performed on a standard machine. The cases $n = 9, 10$ for collections of cells and $n = 12$ for polyominoes were verified using the HPC cluster Tosun provided by Sabancı University, due to their high computational complexity. In all tested cases, namely, for $n \leq 10$ in collections of cells and for $n \leq 12$ in polyominoes, the program confirmed the conjectured claim.  Tables \ref{tab:collection} and \ref{tab:polyominoes} summarize the numbers of weakly connected collections and polyominoes we tested, indicating which of them have non-domain or Gorenstein coordinate ring.
\begin{table}[h!]
\centering
\begin{tabular}{|>{\centering\arraybackslash}p{7cm}|c|c|c|c|c|c|c|c|c|}
\hline
Rank & 2 & 3 & 4 & 5 & 6 & 7 & 8 & 9 & 10 \\
\hline
\parbox[c]{7cm}{\centering\vspace{1mm} Number weakly connected collections of cells\vspace{1mm}} & 2 & 5 & 22 & 94 & 524 & 3031 & 18770 & 118133 & 758381\\
\hline
\parbox[c]{7cm}{\centering\vspace{1mm} Number weakly connected collections of cells whose coordinate ring is not a domain\vspace{1mm}} & 0 & 0 & 1 & 2 & 20 & 135 & 1080 & 8149 & 61834 \\
\hline
\parbox[c]{7cm}{\centering\vspace{1mm} Number weakly connected collections of cells whose coordinate ring is not a domain and Gorenstein\vspace{1mm}} & 0 & 0 & 1 & 1 & 9 & 39 & 233 & 1258 & 7074 \\
\hline
\end{tabular}
\caption{Number of weakly connected collections of cells up to symmetries.}
\label{tab:collection}
\end{table}
\begin{table}[h!]
\centering
\begin{tabular}{|>{\centering\arraybackslash}p{6cm}|c|c|c|c|c|c|c|c|c|c|c|c|}
\hline
Rank & 2 & 3 & 4 & 5 & 6 & 7 & 8 & 9 & 10 & 11 & 12 \\
\hline
Number of polyominoes & 1 & 2 & 5 & 12 & 35 & 108 & 369 & 1285 & 4655 & 17073  &  63600\\
\hline
\parbox[c]{6cm}{\centering\vspace{1mm} Number of polyominoes whose coordinate ring is not a domain\vspace{1mm}} & 0 & 0 & 0 & 0 & 0 & 0 & 0 & 2 & 12 & 73 & 388 \\
\hline
\parbox[c]{6cm}{\centering\vspace{1mm} Number of polyominoes whose coordinate ring is not a domain and Gorenstein\vspace{1mm}} & 0 & 0 & 0 & 0 & 0 & 0 & 0 & 0 & 0 & 3 & 4  \\
\hline
\end{tabular}
\caption{Number of polyominoes up to symmetries.}
\label{tab:polyominoes}
\end{table}
\end{proof}

The previous computational evidence suggests the following conjecture. 

    \begin{Conjecture}\label{Conj Gorenstein}
Let $\cP$ be a collection of cells. Then, the following are equivalent:
    \begin{enumerate}
        \item $K[\cP]$ is Gorenstein;
        \item the $h$-polynomial of $K[\cP]$ is palindromic;
        \item the switching rook polynomial of $\cP$ is palindromic;
        \item $\cP$ is domino-stable.
    \end{enumerate}
\end{Conjecture}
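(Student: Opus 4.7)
The plan is to close the cycle $(1) \Rightarrow (2) \Rightarrow (3) \Rightarrow (4) \Rightarrow (1)$. Two segments of this cycle are already in hand: $(3) \Leftrightarrow (4)$ is exactly Theorem~\ref{Thm: main palindromicity}, and $(1) \Rightarrow (2)$ follows from Stanley's Theorem~\ref{Thm:Stanley}(1), which only requires that $K[\cP]$ be Cohen-Macaulay, a property that is known in many relevant classes (for instance when $I_\cP$ is prime, by the results on simple polyominoes) and which I would first attempt to establish in full generality as a preliminary step. The remaining work then splits into two essentially independent tasks: $(2) \Leftrightarrow (3)$, i.e.\ \cite[Conjecture 2.4]{NQR}; and $(2) \Rightarrow (1)$, equivalently $(4) \Rightarrow (1)$, which cannot be reduced to Stanley's palindromic criterion because \cite[Example 4.5]{S} shows that palindromicity alone does not force Gorensteinness for reduced Cohen-Macaulay rings.

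For $(2) \Leftrightarrow (3)$, I would try to extend the quadratic Gröbner-basis argument used in \cite{NQR} for convex collections to arbitrary collections of cells, decomposing $\cP$ along its stable squares and exploiting Proposition~\ref{Proposition: A cell in Bi minus bar(Bi) has the condition 2} to control how cells outside the stable squares interact with them. From such a basis, the initial ideal $\mathrm{in}(I_\cP)$ would be squarefree, and one would then need to identify its Stanley-Reisner complex with a combinatorial refinement of the rook complex of $\cP$, so that the $h$-polynomial of $K[\cP]$ matches $\tilde{r}_\cP(t)$ coefficient by coefficient. The main obstacle is that no term order is presently known to yield a squarefree initial ideal uniformly for all $\cP$; establishing this, together with a manageable description of the facets of the resulting complex, likely requires a careful inductive argument on the number of maximal rectangles of $\cP$, perhaps peeling off one stable square at a time via Proposition~\ref{Proposition: existence of a single square}.

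The implication $(4) \Rightarrow (1)$ is where I expect the deepest difficulty to lie. The natural approach is to argue by induction on $|\cS(\cP)|$, using Proposition~\ref{Proposition: existence of a single square} to extract a stable square $\cS \subseteq \cP$ and then to relate $K[\cP]$ to $K[\cP']$ for a suitable smaller domino-stable collection $\cP'$. In the base case $\cP = \cS$, Gorensteinness is classical (\cite[pp.451-452]{Sv}); in the inductive step, one would need to show that the combinatorial attaching of a stable square along a row or column of cells preserves Gorensteinness. Concretely, this can be attempted by analyzing the canonical module $\omega_{K[\cP]}$: in the Cohen-Macaulay domain case, assuming $(2)\Leftrightarrow(3)$, Stanley's Theorem~\ref{Thm:Stanley}(2) together with Theorem~\ref{Thm: main palindromicity} already closes the argument via Corollary~\ref{Coro: Gorenstein}(2), so the essential new content is the non-domain case, where one must build $\omega_{K[\cP]}$ from the primary decomposition of $I_\cP$ and verify directly that it is principal.

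The main obstacle, therefore, is the absence of any general principle forcing palindromicity to imply Gorensteinness outside the domain setting. The conjecture asserts that inner $2$-minor ideals of collections of cells enjoy a rigidity phenomenon strong enough to override the counterexamples in \cite[Example 4.5]{S}; the computational evidence of Proposition~\ref{Prop: computational evidence} is consistent with this, but a proof would require a genuinely new structural statement. Any serious attempt should therefore culminate in an explicit combinatorial description of $\omega_{K[\cP]}$ in terms of the stable squares $\cS(\cP)$, which I would make the centerpiece of the approach.
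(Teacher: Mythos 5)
The statement you are addressing is a \emph{conjecture}: the paper does not prove it, and offers only (i) the unconditional equivalence $(3)\Leftrightarrow(4)$ via Theorem~\ref{Thm: main palindromicity}, (ii) the conditional Corollary~\ref{Coro: Gorenstein} (which presupposes $h_{K[\cP]}(t)=\tilde r_\cP(t)$, i.e.\ \cite[Conjecture 2.4]{NQR}, and for the converse direction also that $K[\cP]$ is a Cohen--Macaulay domain), and (iii) the computational verification of Proposition~\ref{Prop: computational evidence} for small ranks. Your proposal correctly maps out this landscape, but it is a research program rather than a proof: the two implications that carry all the content --- $(2)\Leftrightarrow(3)$ for arbitrary collections of cells, and $(4)\Rightarrow(1)$ when $I_\cP$ is not prime --- are left exactly as open as the paper leaves them. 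For the first you sketch a Gr\"obner-basis strategy while conceding that no term order is known to work uniformly; for the second you propose computing the canonical module from a primary decomposition of $I_\cP$ but give no indication of how to carry this out or why palindromicity should force $\omega_{K[\cP]}$ to be principal in the non-domain case. Since the paper itself flags (via \cite[Example 4.5]{S}) that no general principle exists here, a proof would require a genuinely new structural result that neither you nor the paper supplies. So there is no basis for comparison with a proof in the paper: none exists, and your attempt does not close the gap.

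One small correction: you say that $(1)\Rightarrow(2)$ ``only requires that $K[\cP]$ be Cohen--Macaulay, a property \dots which I would first attempt to establish in full generality as a preliminary step.'' This preliminary step is unnecessary --- Gorenstein rings are Cohen--Macaulay by definition, so Stanley's Theorem~\ref{Thm:Stanley}(1) gives $(1)\Rightarrow(2)$ unconditionally, exactly as the paper uses it in Corollary~\ref{Coro: Gorenstein}(1). Establishing Cohen--Macaulayness of $K[\cP]$ for all $\cP$ is itself open and is not needed for that arrow (though it would of course be relevant elsewhere).
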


\begin{footnotesize}

{\bf Acknowledgments.} The second and third authors are supported by Scientific and Technological Research Council of Turkey T\"UB\.{I}TAK under the Grant No: 124F113, and are thankful to T\"UB\.{I}TAK for their support. The first author is supported by Scientific and Technological Research Council of Turkey T\"UB\.{I}TAK under the Grant No: 122F128, and is thankful to T\"UB\.{I}TAK for their supports. The first and third authors are members of INDAM-GNSAGA and acknowledge its support. The authors are grateful to the HPC group at Sabancı University for installing \texttt{Macaulay2} on the Tosun cluster and for providing access to such a valuable computational resource. Moreover, the authors would like to thank D. Grayson, M. Stillman, and D. Torrance for their personal communications and insightful suggestions.\\

{\bf Declaration of competing interest.}  The authors declare that they have no known competing financial interests or personal relationships that could have appeared to influence the work reported in this paper.\\

{\bf Data availability.} Data used for this article are provided in \cite{N1, N2}.
\end{footnotesize}


	\end{document}